\newtheorem{thm}{Theorem}
\newtheorem{lem}{Lemma}[section]
\newtheorem{prop}{Proposition}[section]
\newtheorem{q}{Problem}[section]
\theoremstyle{remark}
\newtheorem{rmk}{Remark}[section]
\theoremstyle{definition}
\newtheorem{defn}{Definition}[section]                                       
\numberwithin{equation}{section}
\def\p{\partial}
\def\R{\mathbb{R}}
\def\C{\mathbb{C}}
\def\Z{\mathbb{Z}}
\def\l{\lambda}
\def\L{\Lambda}
\def\i{\sqrt{-1}}
\def\cA{{\mathcal A}}
\def\cJ{{\mathcal J}}
\def\cK{{\mathcal K}}
\def\cT{{\mathcal T}}
\def \Ker {\text{Ker}}
\def\la{\langle}
\def\ra{\rangle}
\def\d{d^{*}}
\begin{document}

\title[Evolve nondegenerate two forms]{Evolve nondegenerate two forms}

\author{Weiyong He}

\address{Department of Mathematics, University of Oregon, Eugene, Oregon, 97403}
\email{whe@uoregon.edu}
%\dedicatory{To Fan and Ruoqing (Lorelei)}

\maketitle

\section{Introduction}

We propose a method in geometric analysis to study compact manifolds which supports a non degenerate 2-form $\omega$ such that $\omega^n(n!)^{-1}$ defines a volume form on $M$. 
Such a manifold $M$ has to be even dimensional and orientable. When an even dimensional orientable manifold $M$ supports such a two form is well understood in obstruction theory. Indeed a nondegenerate 2-form reduces the structure group of the tangent bundle of $TM$ from $\text{GL}(2n, \R)$ to $\text{Sp}(2n, \R)$, where $\text{Sp}(2n, \R)$ is the group which acts on $\R^{2n}$ isomorphically and preserves the standard symplectic structure on $\R^{2n}$,
\[
dx_1\wedge dy_1+\cdots+dx_n\wedge dy_n.
\]
We identify $\R^{2n}$ with $\C^n$ in the usual way and then $\text{Sp}(2n, \R)$ deformation retracts to its maximal compact subgroup $U(n)\subset \text{GL}(n, \C) (\subset \text{GL}(2n, \R))$. 
Since the latter inclusion is also a homotopy equivalence, this is equivalent to that there is an \emph{almost complex structure} on $M$; in other words, there exists a complex vector bundle structure on the tangent bundle $TM$. By definition, an almost structure $J: TM\rightarrow TM$ is a linear bundle isomorphism covering the identity map of $M$ such that $J^2=-id$.  

Almost complex manifolds contain objects which are central in modern geometry, including symplectic manifolds, complex manifolds and Kahler manifolds for example. A very basic  problem is to ask when an almost complex manifold supports a symplectic structure. The study of symplectic manifolds has witnessed tremendous achievements in last three decades. We refer readers to, for example, \cite{MS} and references therein for more information. 

By definition, a symplectic structure is a smooth manifold with a non degenerate 2-form $\omega$ such that $\omega$ is closed, namely $d\omega=0$.
We should emphasize that there are powerful methods of topological nature (cut-and-paste) in literature to construct symplectic structures. We refer the reader to \cite{Gompf} for example for more details. There are three known sources which obstruct the existence of a symplectic structure. As mentioned above, the manifold has to be almost complex, or equivalently, has a nondegenrate two form. The closeness condition $d\omega=0$ provides another obstruction. Indeed by Stokes' theorem, $[\omega]$ defines a nontrivial de Rham cohomology class in $H^2(M, \R)$ such that $[\omega]^n>0$. The third and last known obstruction is the Seiberg-Witten invariant from gauge theory on 4-manifolds \cite{Taubes}. This last obstruction is rather subtle and we refer the reader to, for example \cite{Gompf2} for  interesting discussions. 

Our motivation is  to propose a  geometric evolution equation, ``canonical" from the point of view of geometric analysis, to study the existence of symplectic forms on a underlying almost complex manifold.  Geometric evolution equations have been studied extensively and  are now a very important subject in geometric analysis with tremendous applications.  In this paper we propose several evolution equations, using the operators $d^*d$ and Hodge Laplacian $\Delta$ which evolve a non degenerate two form $\omega$ (an almost Hermitian structure) to symplectic structures (some canonical models). 
From a geometric analysis (or PDE) point of view, the equation $d\omega=0$ is a first order (linear) system for $\omega$. 
When $M$ is an open manifold (a noncompact manifold without boundary) with a non degenerate two form, Gromov proved that there are always such symplectic two forms by the so-called \emph{h-principle}. When the manifold is compact, the problem is more complicated and there are relatively few tools to handle such a system on compact manifolds. By Lemma \ref{Lchar}, we study the equations for a tamed or compatible pair satisfying $d^*d\omega=0$ or $\Delta \omega=0$. Even though the equation becomes second order nonlinear, we hope that there are more tools from geometric analysis and PDE theory which can help to deal with the problem. 
Indeed, $d^*d\omega=0$ is a degenerate elliptic system and the operator $d^*d$ is essential for our purpose. While Hodge Laplacian $\Delta$ is well studied and it is an elliptic operator when the metric is fixed, but we should mention though $\Delta\omega$ is a rather delicate operator when $\Delta$ is determined by $(\omega, J)$ and $\omega$ and/or $J$ are allowed to vary. 

There are some natural choices  to define an evolution equation to evolve a tamed or compatible pair $(\omega, J)$. The most obvious one might be the \emph{Laplacian flow},
\begin{equation}\label{Lflow}
\frac{\p \omega}{\p t}+\Delta \omega=0.
\end{equation}

The Laplacian flow has already been studied in the context of $G_2$ structure \cite{BX, XY}, where one can already see that even the short time existence is a rather delicate problem, mainly due to the complexity of the linearization of Hodge Laplacian $\Delta$ when viewed as an operator determined by the metric. In our setting similar difficulties arise for \eqref{Lflow} and we shall consider the well-posedness for \eqref{Lflow} elsewhere.  Another  natural choice is 
\begin{equation}\label{Hflow}
\frac{\p \omega}{\p t}+d^*d \omega=0,
\end{equation}
which we shall call a \emph{$d^*d$-flow}, to distinguish with Laplacian operator.   We emphasize that for \eqref{Lflow} and \eqref{Hflow} we fix an almost complex structure $J$ but only require evolving two form $\omega$ is tamed by $J$, while the almost Hermitian metric is determined by $J$ and $J$-invariant part of $\omega$, since neither $\Delta \omega$ nor $d^*d\omega$ is $J$-compatible, even  the initial data $(\omega_0, J_0)$ is assumed to be compatible.
If we want to insist the compatible condition, which may be  more preferable in the point of view of geometric analysis, to evolve an almost Hermitian structure,  we must then allow $(\omega, J)$ to vary simultaneously.  We shall then discuss geometric flows for a compatible pair $(\omega, J)$, with a suitable choice of $K$, 
\begin{equation}\label{Hflow2}
\begin{split}
&\frac{\p \omega}{\p t}+d^*d \omega=0,\\
&\frac{\p J}{\p t}=K.
\end{split}
\end{equation}

Note that the choice of $K$ is not unique in general and this can be a family of flows. 
The most obvious choice is as follows, $K$ can be characterized by
\begin{equation}\label{HK}
\omega(Kx, Jy)=\omega(Jx, Ky)=\frac{1}{2}\left(d^*d\omega(Jx, Jy)-d^*d\omega(x, y)\right).
\end{equation}
We shall still call this flow \emph{$d^*d$-flow} for a compatible pair.  Our first main result is to prove the well-posedness and uniqueness of the $d^*d$-flow for a compatible pair, see Theorem \ref{T-3} and Theorem \ref{T-uniqueness}.

There are other interesting choices for $K$.  Among them we shall discuss $d^*d$-Ricci flow and we believe this flow should also be very interesting. When the initial two form $\omega$ is closed, we show that this condition is preserved, and the flow recovers the anti-complexied Ricci flow studied in \cite{LW}. Another interesting point about $d^*d$-flow and $d^*d$-Ricci flow is that they are closely related to strictly \emph{nearly K\"ahler structures} on dimension six. It seems that these flows can give some approach to evolve almost Hermitian structure (on dimension six) to nearly K\"ahler structures. 

All the considerations can also be discussed for the Laplacian flow. However it seems to be a subtle problem even for the short time existence of Laplacian flow (for either tamed or compatible pair). 
We should mention that geometric flow of a compatible pair $(\omega, J)$ as a flow of almost Hermitian structures has already been studied in literature. For example, 
in  \cite{ST2}, J. Streets and G. Tian proposed a Ricci-flow like system for the compatible pair $(\omega, J)$, called \emph{symplectic curvature flow}, to study the canonical geometric structures on a symplectic manifold.  But our motivation here is completely different. Instead of seeking a canonical geometric structure adapted to a symplectic structure using curvature quantities, we are looking for symplectic structure as a canonical structure over nondegenerate  two forms. We give two proofs of short time existence for \eqref{Hflow2}. One is based on DeTurck's trick and the other is based on Hamilton's theorem. We should emphasize, however, even for short time existence, the system \eqref{Hflow2} seems to be very degenerate since $K$ in \eqref{HK} is completely degenerate in $J$;  at the first glance, $K$ involves no second derivative of $J$.  
And the proof is indeed much more involved.  Nevertheless we will show  that despite the very degeneracy of \eqref{Hflow2}, the degeneracy is \emph{essentially} only caused by the invariance under the action of the diffeomorphism group. We shall consider a more general system for a tamed pair $(\omega, J)$. We can then restore the full parabolicity for such a system (by using DeTurck's trick, in a non-straightforward way). Then we show that the compatibility is preserved by such a system. We show that a solution of this new system gives a solution of \eqref{Hflow2} if $(\omega, J)$ is an almost Hermitian structure. We shall emphasize that the uniqueness does not seem to be a direct extension since the equation on the involved diffeomorphisms is not a parabolic equation (it is a degenerate equation again). Instead we use a theorem proved by Hamilton in his  original proof of short time existence and uniqueness for Ricci flow to give another proof of existence and uniqueness. 

One of our main motivations is to give a precise understanding the existence of symplectic structure among almost complex structures, in particular in dimension four. We believe this is a very important problem and  it is closely related to the geometry and topology of smooth four manifolds. We shall discuss these aspects in a most speculative way, and leave more technical discussions elsewhere. 

We organize the paper as follows. In Section 2 we consider the $d^*d$-flow for a tamed pair $(\omega, J)$ and prove the short time existence and uniqueness. In Section 3 we consider the $d^*d$-flow for an almost Hermitian structure and prove the short time existence and uniqueness. We also prove an extension theorem and consider a simple example with long time existence. In Section 4 we consider the $d^*d$-Ricci flow and its relation to the nearly K\"ahler structures in dimension six. In Section 5 we give some speculative applications of $d^*d$ flow ($d^*d$-Ricci flow) to the geometry of smooth four manifolds.

\section{The $d^*d$-flow for nondegenerate two-form}
In this section we study the the $d^*d$-flow \eqref{Hflow} for nondegenerate two forms. We fix an almost complex structure $J$ on $M$ and consider all nondegenerate two forms which are tamed by $J$. All such forms form a contractible infinite dimensional space (manifold), which we denote by $\cT$. For any initial data $\omega_0\in \cT_J$, we want to study the $d^*d$ flow
\[
\frac{\p \omega}{\p t}+d^*d\omega=0; \;\; \omega(0)=\omega_0. 
\]

We recall some definitions. Let $(M, \omega)$ be a compact manifold with a nondegenerate two form $\omega$. 

\begin{defn}A 2-form $\omega$ is \emph{tamed} by an almost complex structure $J$ if the bilinear form $\omega(\cdot, J\cdot)$ is positive definite on $TM$.  If in addition $\omega(\cdot, J\cdot)$ is symmetric, hence defines an Hermitian structure on $TM$, we say $\omega$ and $J$ are \emph{compatible}; we also denote the almost Hermitian metric by $g(\cdot, \cdot)=\omega(\cdot, J\cdot)$ for a compatible pair $(\omega, J)$. 
\end{defn}

The compatibility condition can also be formulated equivalently  by the condition that $J$ preserves $\omega$ in the sense that $\omega (J\cdot, J\cdot)=\omega(\cdot, \cdot)$.
If $\omega$ and $J$ are compatible, $g$ defines an \emph{almost Hermitian metric} on $M$. We shall note that for a compatible triple $(\omega, J, g)$, any two will determined the other. We shall use $g$ or $(\omega, J)$ to denote the almost Hermitian metric mostly, depending on the structures we want to emphasize. When $\omega$ is only tamed by $J$,  we can also define a metric by \[g(x, y)=\frac{1}{2}\left(\omega(x, Jy)+\omega(y, Jx)\right).\]
Clearly $g$ is compatible with $J$ and hence defines an almost Hermitian metric and its associated non degenerate two form is given by
\[
\tilde \omega (x, y)=g(Jx, y).
\] 
Indeed $\tilde \omega$ is the $J$-invariant part of $\omega$
\[
2\tilde \omega(x, y)=2\omega_{J}=\omega(Jx, Jy)+\omega(x, y).
\]
In general, for a given two form $\beta$, we denote the J-invariant part by
\[
\beta_J(x, y)=\frac{1}{2}\left(\beta(x, y)+\beta(Jx, Jy)\right)
\]
and anti-J invariant part by
\[
\beta_{J_{-}}=\frac{1}{2}\left(\beta(x, y)-\beta(Jx, Jy)\right).
\]
Hence for any tamed pair $(\omega, J)$, we shall also associate the almost Hermitian metric $g$ defined by the compatible pair $(\omega_J, J)$. 
For any given almost Hermitian structure on $M$, 
it induces an inner product structure $\la \cdot, \cdot\ra$ on $\L^{*}M$ and the  Hodge-$*$ operator 
\[
*: \L^*M\rightarrow \L^{*}M
\]
is characterized by, for any $\alpha, \beta\in \L^p(M)$, 
\[
\alpha\wedge *\beta=\la \alpha, \beta\ra  \frac{\omega^n}{n!}. 
\] 
For any $p$-forms $\alpha, \beta\in A^p(M)$, the inner product is defined to be
\[
(\alpha, \beta)=\int_M \alpha\wedge *\beta.
\]
The adjoint operator $d^{*}: A^{p}(M)\rightarrow A^{p-1}(M)$ of exterior differentiation $d$ is then characterized by, 
\[
(\d \alpha, \beta)=(\alpha, d\beta), \forall \alpha\in A^p(M), \beta\in A^{p-1}(M). 
\]
A straightforward computation shows that
\[
\d=-*d*.
\]
The Hodge-Laplacian $\Delta_d$ is defined by
\[
\Delta=\Delta_d=dd^{*}+d^{*}d.\]
The following simple fact gives a characterization of $\omega$ being closed, namely, that $\omega$ is a symplectic form.

\begin{lem}\label{Lchar}For any tamed pair $(\omega, J)$, $\omega$ is closed if and only if $d^*d\omega=0$, where $d^*$ operator is defined by its associate Hermitian metric. For any given Hermitian structure $(\omega, J)$, $\omega$ is symplectic if and only if $\Delta \omega=d^*d\omega=0$. 
\end{lem}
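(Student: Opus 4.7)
The plan is to split the two statements and handle each using pairings and integration by parts, since the ``hard'' direction in each case follows from a one-line positivity argument.

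For the tamed case, one direction is trivial: if $d\omega=0$ then obviously $d^*d\omega=0$. For the converse, the key observation is that regardless of which metric is used to define $d^*$ (here, the metric $g$ associated to the $J$-invariant part $\omega_J$ of $\omega$), the operator $d^*$ is by construction the $L^2$-adjoint of $d$. Hence
\[
(d^*d\omega,\omega)=(d\omega,d\omega)=\|d\omega\|_{L^2}^2\ge 0.
\]
So $d^*d\omega=0$ forces $\|d\omega\|_{L^2}=0$, hence $d\omega=0$ pointwise. Note that we pair $d^*d\omega$ with $\omega$ itself (not with $\omega_J$), which is legitimate because the adjoint identity holds for any pair of forms in the appropriate degrees; the fact that $\omega$ is tamed but not necessarily compatible with $J$ is irrelevant for this step.

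For the compatible (Hermitian) case, I need the stronger conclusion $\Delta\omega=0$. The direction $\Delta\omega=0\Rightarrow d\omega=0$ is the same integration by parts:
\[
(\Delta\omega,\omega)=(dd^*\omega,\omega)+(d^*d\omega,\omega)=\|d^*\omega\|_{L^2}^2+\|d\omega\|_{L^2}^2,
\]
so $\Delta\omega=0$ forces both $d\omega=0$ and $d^*\omega=0$, and in particular $d^*d\omega=0$. For the converse direction, the extra ingredient is the standard Hermitian identity
\[
*\omega=\frac{\omega^{n-1}}{(n-1)!},
\]
valid for a compatible pair $(\omega,J)$ in real dimension $2n$. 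From this and $d^*=-*d*$, if $d\omega=0$ then
\[
d(*\omega)=\frac{d\omega\wedge\omega^{n-2}}{(n-2)!}=0 \quad(n\ge 2),
\]
so $d^*\omega=0$, and therefore $\Delta\omega=d^*d\omega+dd^*\omega=0$. (For $n=1$, $\omega$ is a top form and the claim is vacuous, as every nondegenerate two-form on a surface is symplectic.)

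There is no real obstacle; the only mild subtlety to be careful about is keeping track of which metric defines $d^*$ in the tamed case and verifying that the adjoint pairing and the Hodge star identity $*\omega=\omega^{n-1}/(n-1)!$ are applied in the right setting. Both statements reduce, essentially, to the single observation that $d^*d$ is a nonnegative self-adjoint operator whose kernel on forms consists exactly of closed forms.
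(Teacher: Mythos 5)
Your proof is correct and follows essentially the same route as the paper: the paper also disposes of the tamed case by the (implicit) self-adjointness pairing $(d^*d\omega,\omega)=\|d\omega\|^2$, and handles the compatible case via the identity $*\omega=\omega^{n-1}/(n-1)!$ to deduce $d^*\omega=0$ from $d\omega=0$. The only difference is that you invoke that Hodge-star identity as standard while the paper verifies it pointwise in an orthonormal frame; the substance is identical.
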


\begin{proof}
Clearly $d^*d\omega=0$ is equivalent to $d\omega=0$. 
And $\Delta \omega=0$ implies that $d\omega=0$ and $d^*\omega=0$. While for the metric determined by a compatible pair $(\omega, J)$, the Hodge star satisfies
\begin{equation}\label{E-dual}
* \omega=\frac{\omega^{n-1}}{(n-1)!}. 
\end{equation}
Since $*$ operator is an algebraic operator, we can work on $T_pM$ for a given point. Then pick up an orthonormal basis $\{e_1, \cdots, e_n, Je_1, \cdots, Je_n\}$ of $T_pM$  with the dual basis $\{e_1^*, \cdots, e_n^*, Je_1^*, \cdots, Je_n^*\}$ of $T_p^*M$. One writes
\[\omega=\sum e_i^* \wedge Je_i^*, g=\sum (e_i^*\otimes e_i^*+Je_i^*\otimes Je_i^*)
\]
The desired identity \eqref{E-dual} is then evident. It follows that $d\omega^{n-1}=0$ if $d\omega=0$; this implies that $d^{*}\omega=-*d*\omega=0$. 
\end{proof}

Fix an almost complex structure $J$. We consider the following energy functional $H_0(\omega)$ and $H_1(\omega)$ for any $\omega$ tamed by $J$,
\begin{equation}
H_0(\omega)=(d\omega, d\omega), H_1(\omega)=(d^*\omega, d^*\omega)
\end{equation} Clearly a symplectic form minimizes $H_0(\omega)$. 
We also consider the harmonic energy 
\[
H(\omega, J)=H_0(\omega)+H_1(\omega)=(d\omega, d\omega)+(\d \omega, \d \omega). 
\]
When $(\omega, J)$ is a compatible pair,  $d\omega=0$ implies $H_0=H_1=0$.
Hence an almost Kahler structure, which by definition is an almost Hermitian structure with $d\omega=0$,  minimizes $H(\omega, J)$.

\subsection{Short time existence} We prove the short time existence of $d^*d$ flow  for any initial data (a tamed pair) in this section. 

\begin{thm}\label{thm-tame}The initial value problem of the $d^*d$ flow $\p_t\omega+d^*d\omega=0$ has a unique smooth solution for a short time  with any initial data $\omega_0\in \cT_J$ at time $t=0$. 
\end{thm}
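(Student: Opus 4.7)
\emph{Plan.} The operator $\d d$ on 2-forms is only degenerate elliptic: a direct symbol computation gives
\[
\sigma(\d d)(\xi)\eta = -|\xi|^2 \eta + \xi \wedge \iota_{\xi^\sharp}\eta, \qquad \eta \in \L^2 T^*_pM,
\]
whose kernel $\xi \wedge T^*_pM$ (of dimension $2n-1$) coincides with the symbol-image of $d$ on 1-forms. This symbol degeneracy reflects the formal gauge direction $\omega \mapsto \omega + d\alpha$, along which the principal part of $\d d\omega$ is unchanged; consequently \eqref{Hflow} is not strongly parabolic and standard parabolic theory does not apply directly. My plan is to use DeTurck's trick: compensate for the gauge degeneracy by adding $d\d\omega$, so that the combined operator becomes the Hodge Laplacian $\Delta = \d d + d\d$ with strongly elliptic symbol $-|\xi|^2 \mathrm{Id}$.

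Concretely, I first study the auxiliary flow
\[
\frac{\p \tilde \omega}{\p t} + \Delta_{\tilde g}\tilde\omega = 0, \qquad \tilde\omega(0) = \omega_0,
\]
where $\Delta_{\tilde g} = d\d_{\tilde g} + \d_{\tilde g}d$ is the Hodge Laplacian for the almost Hermitian metric $\tilde g = g(\tilde\omega_J, J)$ attached to $\tilde\omega$. Because $\tilde\omega \mapsto \tilde g$ is algebraic (zeroth order in $\tilde\omega$), the principal symbol of the quasilinear operator $\tilde\omega \mapsto \Delta_{\tilde g}\tilde\omega$ is again $-|\xi|^2_{\tilde g}\,\mathrm{Id}$, so the auxiliary flow is strongly parabolic. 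Standard quasilinear parabolic theory (or Hamilton's theorem) then gives a unique smooth short-time solution $\tilde\omega(t) \in \cT_J$ on some $[0, T_0)$; tamedness persists for small $t$ by openness of $\cT_J$.

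To recover a solution of \eqref{Hflow}, I define a time-dependent 1-form $\alpha(t)$ by the pure time-ODE $\p_t \alpha(t) = \d_{\tilde g(t)}\tilde\omega(t)$, $\alpha(0) = 0$, and set $\omega(t) := \tilde\omega(t) + d\alpha(t)$. Since $d^2 = 0$, $d\omega = d\tilde\omega$, and the computation
\[
\frac{\p \omega}{\p t} = -\Delta_{\tilde g}\tilde\omega + d\d_{\tilde g}\tilde\omega = -\d_{\tilde g}d\tilde\omega = -\d_{\tilde g}d\omega
\]
shows $\omega$ satisfies \eqref{Hflow} up to the discrepancy between $\d_{\tilde g}$ and $\d_g$, with $g = g(\omega_J, J)$. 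The main technical step, and the hardest part of the proof, is closing this metric mismatch: since $\omega_J - \tilde\omega_J = (d\alpha)_J$ is algebraically determined by $\alpha$ and vanishes to first order as $t \to 0^+$, I expect it to be absorbed by a refinement of the gauge ODE (or by iterating the construction), so that $g \equiv \tilde g$ along the flow.

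For uniqueness, given two solutions $\omega^{(1)}, \omega^{(2)}$ of \eqref{Hflow} with the same initial data, I run the construction in reverse: construct 1-forms $\alpha^{(i)}(t)$ so that $\tilde\omega^{(i)} := \omega^{(i)} - d\alpha^{(i)}$ each solves the strongly parabolic auxiliary flow, invoke uniqueness there, and conclude $\omega^{(1)} \equiv \omega^{(2)}$. An alternative route, which the paper develops for the more delicate compatible-pair system \eqref{Hflow2}, is a direct application of Hamilton's general existence and uniqueness theorem for nonlinear evolution equations with an integrable gauge symmetry.
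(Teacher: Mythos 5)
Your diagnosis of the degeneracy is correct and agrees with the paper's: the symbol of $d^*d$ on two-forms has kernel $\xi\wedge T_p^*M=\mathrm{Im}\,\sigma(d)(\xi)$, so \eqref{Hflow} is only weakly parabolic. But the repair you propose does not work, for two reasons, and the second is fatal. First, the ``gauge group'' $\omega\mapsto\omega+d\alpha$ is \emph{not} a symmetry of the equation: the operator $d^*=d^*_{g(\omega_J,J)}$ is built from $\omega$ itself, so replacing $\omega$ by $\omega+d\alpha$ changes the metric and hence the operator. In Ricci flow the DeTurck trick succeeds because the equation is diffeomorphism-covariant and the pullback intertwines the modified and unmodified flows exactly; here there is no analogous covariance, which is why the metric mismatch $d^*_{\tilde g}$ versus $d^*_{g}$ that you defer as ``the main technical step'' is not a technicality to be absorbed --- it is the entire difficulty. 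The paper states explicitly that it knows of no gauge-fixing trick restoring full parabolicity for this flow. Second, the auxiliary flow $\p_t\tilde\omega+\Delta_{\tilde g}\tilde\omega=0$ is the Laplacian flow \eqref{Lflow}, and your claim that its principal symbol is $-|\xi|^2\,\mathrm{Id}$ is unjustified and essentially false as stated: while $d^*_g d\eta$ involves only \emph{first} derivatives of $g$ for fixed $\eta$ (so the linearization of $d^*d\omega$ has principal part $d^*d\psi$), the term $dd^*_g\eta=-d*d*\eta$ involves \emph{second} derivatives of $g$. Since $\tilde g$ is algebraic in $\tilde\omega$, linearizing $\tilde\omega\mapsto dd^*_{\tilde g}\tilde\omega$ produces an additional second-order term of the schematic form $d\bigl(*\,d(A(\tilde\omega)\psi)\bigr)$ from the variation of the Hodge star, and the principal symbol is not a multiple of the identity. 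This is exactly the ``complexity of the linearization of the Hodge Laplacian'' for which the paper defers the well-posedness of \eqref{Lflow} and poses it as an open problem; your construction quietly assumes that open problem is solved.

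The paper's actual proof is the route you relegate to your final sentence, and it is short: take $E(\omega)=-d^*d\omega$ and the integrability condition $L(\omega)\psi=d^*\psi$. Then $L(\omega)E(\omega)=-d^*d^*d\omega=0$ (degree $\le 1$ trivially), and on $\mathrm{Null}(d^*)=d^*(A^3)\oplus\mathrm{Ker}(\Delta)$ the operator $-d^*d$ coincides with $-\Delta$, hence the symbol $\sigma DE(\omega)(\xi)$ is strictly positive on $\mathrm{Null}\,\sigma L(\omega)(\xi)$. Hamilton's Theorem \ref{thm-h} then gives short-time existence and uniqueness directly, with no gauge fixing. (A small correction to your attribution: the paper uses Hamilton's theorem for this tamed-pair theorem itself, not only for the compatible-pair system.) To make your write-up into a proof, replace the DeTurck construction by this verification of conditions (A) and (B).
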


The principle part (second order) of the linearization of $d^*d\omega$ is given by $d^*d\psi$ (the variation of $\omega$ is given by $\delta\omega=\psi$). We see this directly since for a fixed form $\eta, d^*d \eta$ involves only first order derivative of the metric (hence of $\omega$). This operator is clearly not elliptic and it has an infinite dimensional kernel $\text{Im}d(A^1)\subset A^2$. Hence the $d^*d$ flow is not strictly parabolic. However this equation is not invariant under all diffeomorphisms since we fix the almost complex structure $J$ (the invariant group is a large one, including the diffeomorphisms fixing $J$ but it does not seem to have a good structure). We are not aware that there is a way to restore the full parabolicity  by a gauging fixing trick such as DeTurck's trick for Ricci flow (this causes similar difficulty for Laplacian flow).  R. Hamilton proved a general existence theorem of  a weakly parabolic equation, using Nash-Moser inverse function theorem \cite{Hamilton821} in his seminal paper \cite{Hamilton82}, where the short time existence of Ricci flow flows directly.  Our proof of short time existence of \eqref{Hflow} relies on his result and some basic Hodge theory.  Indeed given the structure of the $d^*d$ flow, the short time existence follows directly from Hamilton's Theorem 5.1, which we record below.

\begin{thm}[Hamilton]\label{thm-h}Let $\p f/\p t =E(f)$ be an evolution equation with integrability condition $L(f)$. Suppose that

(A) $L(f)E(f)=Q(f)$ has at most degree $1$. 

(B) all the eigenvalues of the eigenspaces of $\sigma DE(f)(\xi)$ in $\text{Null}\;\sigma L(f)(\xi)$ have strictly positive real parts.

Then the initial value problem $f=f_0$ at $t=0$ has a unique smooth solution for a short time $0\leq t\leq \epsilon$ where $\epsilon$ may depend on $f_0$. 
\end{thm}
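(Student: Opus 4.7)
The plan is to prove this via the Nash--Moser inverse function theorem in the category of tame Fréchet spaces, which is the technique Hamilton developed for exactly this purpose in \cite{Hamilton821, Hamilton82}. Set $P(f) = \p_t f - E(f)$, viewed as a smooth tame map between tame Fréchet spaces of smooth sections over $M \times [0,\epsilon]$ (graded by $C^k$ norms). The problem reduces to finding a smooth tame right inverse for the linearization $DP(f)[h] = \p_t h - DE(f)[h]$, uniformly for $f$ in a neighborhood of $f_0$, and then invoking the Nash--Moser theorem.

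The principal difficulty is that $DE(f)$ is only weakly parabolic: by hypothesis $\sigma DE(f)(\xi)$ has a nontrivial kernel which necessarily contains directions dual to $\mathrm{Null}\,\sigma L(f)(\xi)$. My first step would be to use condition (B): on the complement of the degeneracy directions, $\sigma DE(f)(\xi)$ has eigenvalues with strictly positive real part. Thus, modulo the integrability obstruction, $DE(f)$ is genuinely parabolic, and standard linear parabolic theory (energy estimates plus Galerkin or semigroup construction) produces a solution with smooth tame estimates on that complement.

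The second step is to handle the directions lying in the degeneracy of the symbol using condition (A). The identity $L(f)E(f) = Q(f)$ with $Q$ of order at most $1$ means that if $f(t)$ satisfies the equation then $\p_t L(f) = DL(f)[E(f)] + \text{l.o.t.}$ is a first-order, essentially transport-type expression in $L(f)$. Consequently the constraint $L(f) = 0$, once imposed on the initial data, propagates via a hyperbolic equation that can be solved along characteristics with smooth tame bounds. Pairing this with the parabolic component from the first step — after a careful decomposition of the target using a projection adapted to $\sigma L(f)$ — yields a candidate right inverse $V(f)$ of $DP(f)$. One then verifies that $V(f)$ loses only a fixed finite number of derivatives and satisfies tame estimates depending smoothly on $f$, so that Hamilton's version of Nash--Moser applies and produces a smooth solution on $[0,\epsilon]$. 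Uniqueness follows from the same inverse, since any two solutions with identical initial data differ by a solution of the homogeneous linearized problem, which vanishes by the parabolic energy estimate on the complementary subspace together with the propagated constraint on the degenerate subspace.

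The main obstacle is the verification of the tame estimates for the combined inverse: the splitting into parabolic and transport components depends on $f$, the projections are only pseudodifferential (not differential) in general, and the interaction of their $f$-derivatives with Nash--Moser's loss-of-derivatives bookkeeping requires delicate interpolation. This is precisely the technical heart of \cite{Hamilton82} (Sections 4 and 5), and I would ultimately refer to that treatment for the full tame-estimate calculus, while the architectural steps — reduction to an inverse, parabolic estimate transverse to $\mathrm{Null}\,\sigma L$, and transport propagation of the constraint — are the conceptual content of the proof.
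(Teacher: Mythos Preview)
This theorem is not proved in the paper at all: it is quoted verbatim as Hamilton's Theorem 5.1 from \cite{Hamilton82}, and the paper simply refers the reader there for the full argument. Your sketch correctly identifies the architecture of Hamilton's own proof --- Nash--Moser in tame Fr\'echet spaces, with the linearized inverse built from a parabolic estimate on $\mathrm{Null}\,\sigma L(f)(\xi)$ and propagation of the constraint via condition (A) --- so you are not diverging from the paper, you are reconstructing the external reference it invokes.
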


We explain the notations roughly and refer the reader to  \cite{Hamilton82} Section 5 for full details. Here $f$ is a section of a vector bundle $F$ (or belongs to an open set $U$ of $F$) over $M$ and $E: C^\infty(M, U)\rightarrow C^\infty(M, F)$ is a (nonlinear) second order differential operator (viewed as a smooth map in a Frechet space $C^\infty(M, F)$ to itself). $DE(f)\tilde f$ is a linear differential operator in $\tilde f$ of degree 2 and it is the linearization of $E(f)$. We use $\sigma DE(f)$ to denote its symbol (principle part). $L(f) h$ is a differential operator of degree 1 on sections $f\in C^\infty(M, U)$ and $h\in C^\infty(M, F)$ with values in another vector bundle $G$ such that $L(f)E(f)=Q(f)$ only has at most degree 1 in $f$; $L(f)$ is called an \emph{integrability condition} for $E(f)$. By a degree consideration, one sees directly that 
$\sigma L(f)(\xi) \cdot \sigma DE(f) \xi=0$
and in particular, 
\[
\text{Im} \;\sigma DE(f)(\xi)\subset \text{Null}\;\sigma L(f)(\xi).
\]
If $L$ is not trivial then $\sigma DE(f)(\xi)$ must have a null eigenspace; hence $DE(f)$ is not a strictly elliptic operator (or $\p_t-E$ is not a parabolic operator); the best we can hope is that $\sigma DE(f)(\xi)$ is positive when restricted on $\text{Null}\;\sigma L(f)(\xi)$.  Theorem 3 asserts that such a weak parabolic system has a unique smooth short time solution. 

To apply Hamilton's results, we need to find an integrability condition for $d^*d$ flow and verify Condition (B) in the above theorem. Indeed the operator we consider is $E(\omega)=-d^*d\omega$ and the integrability condition is given by $L(\omega)\psi=d^*\psi$, where $d^*$ operator is defined by the metric associated to the pair $(\omega, J)$, as mentioned above. Clearly $L(\omega)E(\omega)=0$. Moreover, the principle part of the linearization of $E(\omega)$ is still $E=-d^*d$, as explained above. By the Hodge decomposition, $A^2=d(A^1)\oplus d^*(A^3)\oplus \text{Ker}(\Delta)$ and hence $\text{Null}(L)=\text{Null}(d^*)=d^*(A^3)\oplus \text{Ker}(\Delta).$ When restricted to $\text{Null}(d^*)$, clearly $E=-d^*d$ is an elliptic operator (it is just the minus Hodge Laplacian $-\Delta$; this negative sign suits exactly for the parabolic equation). Alternatively, one can also verify that  $\sigma DE(\omega)(\xi)$ is positive when restricted on $\text{Null}\;\sigma L(\omega)(\xi)$  as in \cite{Hamilton82} Section 4 by computing the symbol directly. Nevertheless this allows us to use Theorem \ref{thm-h} to conclude Theorem \ref{thm-tame}. 

Even though the $d^*d$ flow makes perfect sense for a tamed pair $(\omega, J)$, it would be more preferable to study a compatible pair $(\omega, J)$ for various reasons. Nevertheless, it suggest $d^*d$ is a degenerate elliptic operator which possesses good properties. This observation gives us the motivation to prove the well-posedness of the $d^*d$ flow for a compatible pair, which we shall consider in the following sections.

\section{The $d^*d$-flow of a compatible pair}\label{HC}

\subsection{The variation of an almost Hermitian structure}
We start with the study of the variation of an almost Hermitian structure of $(\omega, J)$. 
Some properties below are explored in \cite{ST2} for example. Since the consideration is straightforward, we include these discussions here for completeness. 
Suppose the infinitesimal variation of $(\omega, J)$ given by
\[
\delta \omega=\theta, \delta J=K
\]
By the compatibility condition, we have

\begin{prop}The pair of infinitesimal variation  $(\theta, K)$ at $(\omega, J)$ satisfies 
\begin{equation}\label{K-equ}
\begin{split}
JK+KJ&=0\\
\omega(Kx, Jy)+\omega(Jx, Ky)&=\theta(x, y)-\theta(Jx, Jy). 
\end{split}
\end{equation}
\end{prop}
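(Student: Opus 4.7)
The plan is to differentiate the two defining relations of a compatible pair at the pair $(\omega, J)$. Recall that $J$ being an almost complex structure is the pointwise algebraic condition $J^2 = -\text{id}$, and (as already observed just before the proposition) compatibility of $(\omega, J)$ is equivalent to the $J$-invariance of $\omega$, namely $\omega(Jx, Jy) = \omega(x, y)$. Both identities hold for every point of the path $(\omega + s\theta + O(s^2),\, J + sK + O(s^2))$, so differentiating in $s$ at $s = 0$ should produce precisely the two stated identities with no further input.

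First I would establish $JK + KJ = 0$ by differentiating $J^2 = -\text{id}$. Writing $J_s = J + sK + O(s^2)$, one has
\[
J_s^2 = J^2 + s(JK + KJ) + O(s^2) = -\text{id},
\]
so the $s^1$ coefficient forces $JK + KJ = 0$. This is just the standard statement that an infinitesimal deformation of an almost complex structure must anti-commute with $J$, i.e.\ $K$ lies in the subbundle of $\text{End}(TM)$ anti-commuting with $J$.

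Second, I would differentiate the compatibility identity $\omega(Jx, Jy) = \omega(x, y)$ along the path. Replacing $\omega \leadsto \omega + s\theta$ and $J \leadsto J + sK$ and expanding to first order in $s$, the left-hand side becomes
\[
\omega(Jx, Jy) + s\bigl[\theta(Jx, Jy) + \omega(Kx, Jy) + \omega(Jx, Ky)\bigr] + O(s^2),
\]
while the right-hand side is $\omega(x, y) + s\,\theta(x, y) + O(s^2)$. Matching $s^1$ coefficients and using the unperturbed identity $\omega(Jx, Jy) = \omega(x, y)$ to cancel the zeroth-order terms yields exactly
\[
\omega(Kx, Jy) + \omega(Jx, Ky) = \theta(x, y) - \theta(Jx, Jy).
\]

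There is no genuine obstacle here; both identities are purely formal consequences of the two algebraic constraints defining a compatible pair. The only remark worth flagging is a sanity check: the right-hand side of the second equation is, up to a factor of two, the anti-$J$-invariant part $\theta_{J_-}$ defined in the excerpt, consistent with the fact that the left-hand side is manifestly anti-$J$-invariant in $(x, y)$ once one uses $JK + KJ = 0$. This shows the system \eqref{K-equ} is consistent and determines $K$ from $\theta_{J_-}$ (since $\omega$ is nondegenerate), which is precisely what is needed for the subsequent discussion of evolving a compatible pair.
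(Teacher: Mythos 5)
Your proof is correct and follows exactly the paper's argument: the paper also derives the first identity by differentiating $J^2=-\mathrm{id}$ and the second by differentiating the compatibility identity $\omega(J\cdot,J\cdot)=\omega(\cdot,\cdot)$, merely stating this without writing out the expansion you have spelled out. No discrepancy.
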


\begin{proof}This is a straightforward computation. The first identity follows from the fact that $J^2=-1$ and the second follows from the fact that $\omega$ is $J$-compatible. 
\end{proof}

The space of elements $(\theta, K)$ satisfying the above can be viewed as the ``tangent space" of $\cA_M$ at $(\omega, J)$ (however we should emphasize that this space is not a linear space). The infinitesimal variations generated by diffeomorphisms will be important for us. 
\begin{prop} For any vector field $X$,  $(L_X\omega, L_XJ)$ satisfies \eqref{K-equ}. In particular, if $(\theta, K)$ is an infinitesimal variation of $(\omega, J)$, then $(\theta+L_X\omega, K+L_XJ)$ is also an infinitesimal variation of $(\omega, J)$. 
\end{prop}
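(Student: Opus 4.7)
The plan is to exploit the fact that the Lie derivative $L_X$ is the infinitesimal generator of pullback along the flow of $X$. Let $\{\phi_t\}$ denote the local one-parameter group of diffeomorphisms of $M$ generated by $X$. Since pullback by a diffeomorphism preserves the algebraic identities defining an almost Hermitian structure, the pair $(\phi_t^*\omega,\phi_t^*J)$ is again an almost Hermitian structure on $M$ for every $t$; in particular
\[
(\phi_t^*J)^2 = -\mathrm{id}, \qquad (\phi_t^*\omega)\bigl((\phi_t^*J)x, (\phi_t^*J)y\bigr) = (\phi_t^*\omega)(x, y).
\]
Differentiating these two identities at $t=0$, and using $\left.\tfrac{d}{dt}\right|_{t=0}\phi_t^*J = L_XJ$ and $\left.\tfrac{d}{dt}\right|_{t=0}\phi_t^*\omega = L_X\omega$, produces precisely the two conditions in \eqref{K-equ} with $\theta = L_X\omega$ and $K = L_XJ$. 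No analysis is needed beyond this bookkeeping.

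If one prefers a direct verification avoiding the flow, the same conclusions follow from the Leibniz rule for Lie derivatives. The first identity is immediate from differentiating $J^2=-\mathrm{id}$, which yields $(L_XJ)J + J(L_XJ)=0$. For the second, apply $L_X$ to the compatibility relation $\omega(JY,JZ)=\omega(Y,Z)$, using $L_X(JY)=(L_XJ)Y+J[X,Y]$ together with the Leibniz rule for the $(0,2)$-tensor $\omega$; the terms involving $[X,Y]$ and $[X,Z]$ cancel between the two sides by compatibility itself, leaving exactly the required identity.

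The ``in particular'' clause is then immediate from linearity: the system \eqref{K-equ} is linear in the pair $(\theta,K)$, so adding $(L_X\omega,L_XJ)$ to any admissible infinitesimal variation $(\theta,K)$ yields another admissible variation. There is no real obstacle in this proof; the whole content of the proposition is that the diffeomorphism group acts on the space of almost Hermitian structures, and the infinitesimal action at $(\omega,J)$ automatically produces vectors tangent to $\cA_M$.
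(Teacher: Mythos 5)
Your proof is correct and follows essentially the same route as the paper: the paper likewise observes that $(\phi_t^*\omega,\phi_t^*J)$ remains an almost Hermitian structure along the flow of $X$ and differentiates at $t=0$ to obtain \eqref{K-equ} for $(L_X\omega,L_XJ)$. Your added remarks — the direct Leibniz-rule verification and the explicit appeal to linearity of \eqref{K-equ} for the ``in particular'' clause — are harmless elaborations of the same argument.
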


\begin{proof}This is straightforward  and we can understand this geometrically. Let $\phi_t$ be the diffeomorphism generated by $X$ with $\phi_0=id$. Then $(\phi_t^* \omega, \phi_t^*J)$ is an almost Hermitian structure. Now $(L_X\omega, L_XJ)$ is the derivative of $(\phi_t^*\omega, \phi_t^*J)$, hence provides an infinitesimal variation of $(\omega, J)$. In particular, we have
\[
\begin{split}
L_XJ \circ J+J\circ L_XJ=&0\\
\omega((L_XJ)x, Jy)+\omega(Jx, (L_XJ)y)=&L_X\omega(x, y)-L_X\omega(Jx, Jy). 
\end{split}
\]
\end{proof}

We explore some useful properties for such elements $(\theta, K)$. 
Define a two-form $\tilde A$ by
\begin{equation}\label{E-A}
\tilde A(x, y)=\frac{1}{2}\left(\theta(x, y)-\theta(Jx, Jy)\right).
\end{equation}
Clearly $\tilde A$ is anti $J$-invariant, namely $\tilde A(Jx, Jy)+\tilde A(x, y)=0$. 
Let $A\in TM\otimes T^{*}M$ be the unique tensor defined by
\[
g(Ax, y)=\omega (Ax, Jy)=\tilde A(x, y)
\]
Then it  is straightforward to check that $A$ satisfies \eqref{K-equ}; indeed anti $J$-invariance of $\tilde A$ implies $JA+AJ$=0 and
\[\omega(Jx, Ay)=-\omega(Ay, Jx)=-\tilde A(y, x)=\tilde A(x, y).
\]
However $A$ is not the only choice of $K$ satisfying \eqref{K-equ}. Let $\tilde B\in S(T^*M\otimes T^*M)$ be any symmetric two-tensor such that it is anti $J$-invariant.
We also use the notation $\tilde B\in S^{J_{-}}(T^*M\otimes T^*M)$. 
 Let $K$ be the tensor field defined by
\begin{equation}\label{E-pair}
g(Kx, y)=\omega(Kx, Jy)=\tilde A(x, y)+\tilde B(x, y). 
\end{equation}
We have the following characterization of the pair $(\theta, K)$. 
\begin{prop}\label{P-infinitisimal}Any infinitesimal variation $(\theta, K)$ can be characterized by \eqref{E-pair} using $\tilde A$ and $\tilde B$, where $\tilde A$ is the two form defined  in \eqref{E-A} and $\tilde B\in T^*M\otimes T^*M$ is any
symmetric anti $J$-invariant two tensor. \end{prop}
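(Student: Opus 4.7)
The plan is to translate both constraints in \eqref{K-equ} into a single algebraic condition on the bilinear form
\[
T(x,y) := g(Kx,y) = \omega(Kx, Jy),
\]
and then read off $\tilde A$ and $\tilde B$ as the antisymmetric and symmetric parts of $T$. The content of the proposition is essentially an algebraic decomposition of $T$, with the almost Hermitian structure entering only through the $J$-invariance of $g$.

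First I would verify the equivalence: $JK+KJ=0$ if and only if $T$ is anti-$J$-invariant. Using $KJx = -JKx$ together with $g(Ja, Jb) = g(a,b)$, one computes
\[
T(Jx, Jy) = g(KJx, Jy) = -g(JKx, Jy) = -g(Kx, y) = -T(x,y),
\]
and the reverse implication follows from running this computation backwards and using nondegeneracy of $g$. Next, I would rewrite the second identity in \eqref{K-equ} using $\omega(a,b) = -g(a, Jb)$, which gives $\omega(Jx, Ky) = -g(x, Ky) = -T(y, x)$. Combined with $\omega(Kx, Jy) = T(x,y)$, the second identity collapses to
\[
T(x,y) - T(y, x) = 2\tilde A(x,y),
\]
i.e. the antisymmetric part of $T$ coincides with $\tilde A$.

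Now decompose $T = T_s + T_a$ into its symmetric and antisymmetric pieces. The antisymmetric part is forced to be $\tilde A$ (which is anti-$J$-invariant by \eqref{E-A}), while $T_s$ is free; since anti-$J$-invariance is preserved under both symmetrization and antisymmetrization, $T_s$ must itself be anti-$J$-invariant. Setting $\tilde B := T_s$ recovers \eqref{E-pair}. Conversely, given any symmetric anti-$J$-invariant $\tilde B$, one defines $K$ by \eqref{E-pair}; then $T = \tilde A + \tilde B$ is anti-$J$-invariant and has antisymmetric part $\tilde A$, so reversing the two equivalences above yields exactly the pair of equations \eqref{K-equ}. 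The entire argument is routine bookkeeping, and the only place requiring care is the sign convention relating $\omega$ and $g$ (in particular the $J^{-1} = -J$ that appears when expressing $\omega$ back in terms of $g$); no substantive obstacle is expected.
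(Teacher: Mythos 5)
Your proof is correct and follows essentially the same route as the paper: both arguments reduce \eqref{K-equ} to the statement that $g(K\cdot,\cdot)=\omega(K\cdot,J\cdot)$ is anti-$J$-invariant with antisymmetric part equal to $\tilde A$, and then identify $\tilde B$ as the (necessarily symmetric, anti-$J$-invariant) remainder. Your packaging of the two conditions in terms of the single bilinear form $T$ before splitting into symmetric and antisymmetric parts is a slightly cleaner organization of the same computation, and all the sign conventions you use ($\omega(a,b)=-g(a,Jb)$, $J^{-1}=-J$) check out.
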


\begin{proof}
First suppose $K$ satisfies \eqref{E-pair}, then $(\theta, K)$ satisfies \eqref{K-equ}. Indeed $JK+KJ=0$ follows from the fact that $\tilde A, \tilde B$ are both anti $J$-invariant. Moreover, 
\[
\omega(Jx, Ky)=-\omega(Ky, Jx)=-\tilde A(y, x)-\tilde B(y, x),
\]
hence we can check that
\[
\omega(Kx, Jy)+\omega(Jx, Ky)=\tilde A(x, y)-\tilde A(y, x)+\tilde B(x, y)-\tilde B(y, x)=2\tilde A(x, y).
\]
This proves that $(\theta, K)$ satisfies \eqref{K-equ}. 

On the other hand, suppose $(\theta, K)$ is an infinitesimal variation of $(\omega, J)$ satisfying \eqref{K-equ}. Write
\[
\omega(Kx, Jy)=\tilde A(x, y)+\tilde B(x, y).
\]
Then we compute
\[
\omega(Jx, Ky)=-\omega(Ky, Jx)=-\tilde A(y, x)-\tilde B(y, x).
\]
Hence by \eqref{K-equ},
\[
\tilde A(x, y)-\tilde A(y, x)+\tilde B(x, y)-\tilde B(y, x)=\theta(x, y)-\theta(Jx, Jy)
\]
It follows that $\tilde B$ is symmetric. Moreover, since $\omega(Kx, Jy)$ and $\tilde A(x, y)$ are both anti $J$-invariant, $\tilde B$ is also anti $J$-invariant. Indeed, $\tilde A$ and $\tilde B$ are the anti-symmetric and symmetric part of $\omega(Kx, Jy)$ respectively, hence are uniquely determined by $(\theta, K)$. 
\end{proof}
 
We write $K=A+B$ with $B$ satisfying
\[
g(Bx, y)=\tilde B(x, y);
\]
as mentioned above, the space of elements $(\theta, \tilde B)$ can be viewed as the tangent space of $\cA_M$ at $(\omega, J)$. In other words, we have naively 
\begin{equation}\label{E-tan}
T_{(\omega, J)}\cA_M\cong \Gamma (\L^2M\oplus S^{J_{-}}(T^*M\otimes T^*M)).
\end{equation}
However we shall not consider the structure of the space of all almost Hermitian structures as an infinite dimensional manifold in a rigorous way and \eqref{E-tan} is only understood in a superficial way, to illustrate Proposition \ref{P-infinitisimal}. \eqref{E-tan} will not be needed for any technical results below.

By the discussion above, there are certain canonical choices of $K$ (or $\tilde B$) to define geometric flow of a compatible pair $(\omega, J)$. We are mainly interested in two cases. One is  the geometric evolution equation for an almost Hermitian structure $(\omega, J)$ with $\tilde B=0$, 
\begin{equation}\label{Hflow3}
\begin{split}
&\frac{\p \omega}{\p t}+d^*d\omega=0\\
&\frac{\p J}{\p t}=K_1(\omega, J),
\end{split}
\end{equation}
where $K_1$ is uniquely determined by
\begin{equation}\label{E3-k1}
g(K_1x, y)=(-d^*d\omega)_{J_{-}}=\frac{1}{2}(d^{*}d\omega(Jx, Jy)-d^{*}d\omega(x, y)). 
\end{equation}
Another interesting  choice of $\tilde B$ is anti-$J$ invariant part of Ricci tensor
\[
\tilde B(x, y)=Ric(Jx, y)+Ric(x, Jy),
\]
which we shall study in the next section.

\subsection{Short time existence}
Now we prove the well-posedness of \eqref{Hflow3} and the short time existence of smooth solution for any initial almost Hermitian structure $(\omega, J)$. Uniqueness will be proved in the next subsection. 

\begin{thm}\label{T-3}For any initial almost Hermitian structure $(\omega_0, J_0)$ at $t=0$, there exists a unique smooth solution of \eqref{Hflow3} with almost Hermitian structure $(\omega(t), J(t))$ for a short time. 
\end{thm}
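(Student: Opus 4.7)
The plan is to handle existence by DeTurck's trick (carefully adapted, since $K_1$ contains no second derivatives of $J$) and to obtain uniqueness via Hamilton's Theorem \ref{thm-h}, following the two-pronged strategy indicated in the introduction.

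First I would identify the precise nature of the degeneracy. The $\omega$-equation is degenerate in the direction of $\mathrm{Im}(d)$, exactly as in Section 2. The $J$-equation is even more severely degenerate because $K_1$, defined by \eqref{E3-k1}, is zeroth order in $J$; its second-order content lives entirely in the metric determined by $\omega$. The heuristic guiding the proof is that this combined degeneracy is essentially of diffeomorphism-invariance type, so a DeTurck-style gauge fixing should restore parabolicity of both equations simultaneously.

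Fix a background connection $\tilde\nabla$ on $M$ and let $W = W(\omega, J)$ be the DeTurck vector field with components $g^{ij}(\Gamma^k_{ij} - \tilde\Gamma^k_{ij})$, where $g = \omega(\cdot, J\cdot)$. I would then consider the gauged system
\begin{equation*}
\partial_t\omega = -d^*d\omega + L_W\omega, \qquad \partial_t J = K_1(\omega,J) + L_W J,
\end{equation*}
and carry out a symbol computation to verify that it is strictly parabolic. Since $W$ depends on first derivatives of $(\omega, J)$, both $L_W\omega$ and $L_W J$ contribute genuine second-order principal terms: $L_W \omega$ supplies ellipticity on the $\mathrm{Im}(d)$ direction, while $L_W J$ is what makes the $J$-equation second order at all. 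The delicate point is that the anti-$J$-invariant projection from the constraint $J\kappa + \kappa J = 0$ must be respected by the symbol, so one has to track how the Hessian of $g$ entering through $L_W J$ interacts with the constraint \eqref{K-equ}. Short-time existence for the gauged system then follows from standard parabolic theory on sections of $\Lambda^2 M \oplus \mathrm{End}(TM)$.

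Next I would show that compatibility is preserved along the gauged flow, using that $L_W$ acts geometrically and that $K_1$ was constructed so that $(-d^*d\omega, K_1)$ lies in the tangent space to the compatible locus at $(\omega, J)$ by Proposition \ref{P-infinitisimal}. Pulling back by the family of diffeomorphisms $\varphi_t$ generated by $-W$ with $\varphi_0 = \mathrm{id}$, the pair $(\varphi_t^*\omega, \varphi_t^* J)$ then solves \eqref{Hflow3}. For uniqueness, the usual DeTurck pullback argument does not apply cleanly because the ODE for $\varphi_t$ depends on derivatives of the solution in a degenerate way; instead I would apply Theorem \ref{thm-h} directly to $E(\omega, J) = (-d^*d\omega, K_1)$ with an integrability condition extending $L(\omega)\psi = d^*\psi$ from Section 2, augmented to account for the absent $J$-principal part so that on the null space of $\sigma L$ the combined symbol has eigenvalues with strictly positive real parts. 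The main obstacle throughout is the symbol computation restoring ellipticity of the gauged $J$-equation: since $K_1$ supplies no second-order $J$-terms, everything depends on a delicate cancellation within $L_W J$, and verifying positivity requires carefully tracking how the anti-$J$-invariant projection interacts with the implicit Laplacian of $J$ in $L_W J$.
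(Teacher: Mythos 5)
There is a genuine gap at the heart of your existence argument. You propose to restore parabolicity of the $J$-equation through the Lie-derivative term $L_WJ$, with $W$ the standard DeTurck field $g^{ij}(\Gamma^k_{ij}-\tilde\Gamma^k_{ij})$, asserting that ``$L_WJ$ is what makes the $J$-equation second order.'' This cannot work: for an endomorphism field, $(L_YJ)^j_i=Y^k\p_kJ^j_i-J^k_i\p_kY^j+J^j_k\p_iY^k$, so even when $Y$ depends on first derivatives of $J$, the second-order symbol of $L_YJ$ in $J$ factors through contractions with $J$ and can never equal a positive multiple of the identity on the space of variations $P^j_i$; the paper states this explicitly (``$L_YJ$ \dots cannot be an elliptic operator on $J$ for any choice of $Y$, regardless $Y$ involves derivatives of $J$ or not''). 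Moreover your $W$ depends on first derivatives of $J$ through $g=\omega(\cdot,J\cdot)$, so $L_W\omega$ injects second derivatives of $J$ into the $\omega$-equation and destroys the lower-triangular block structure of the symbol that the argument needs. The idea actually required, and absent from your proposal, is the operator $K_2$ of \eqref{E-k}: one enlarges the problem to tamed pairs, replaces $K_1$ by a genuinely second-order operator $K_2$ which is elliptic on $J$ by construction, chooses $X$ to depend only on $\omega$ and its first derivatives (so $L_X\omega$ gains ellipticity on $\omega$ without touching second derivatives of $J$), solves the resulting strictly parabolic system, proves that compatibility is preserved, and only then invokes Proposition \ref{P3-k12} to conclude $K_2=K_1$ on the compatible locus before ungauging. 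Without the $K_1=K_2$ identity for compatible pairs --- which uses $\omega_{ij}=\omega_{kl}J^k_iJ^l_j$ to trade second derivatives of $J$ for second derivatives of $\omega$ --- there is no parabolic system to solve.

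The same omission undermines your uniqueness argument. Applying Theorem \ref{thm-h} directly to $E(\omega,J)=(-d^*d\omega,K_1)$ fails condition (B): since $K_1$ in \eqref{E3-k1} contains no second derivatives of $J$, the symbol $\sigma DE$ acts by zero on the $J$-component of the null space of $\sigma L$, so its eigenvalues there are $0$, not strictly positive. No ``augmentation'' of the integrability condition can fix this, because the integrability condition only shrinks the null space in the $\omega$-directions ($L(\omega,J)(\theta,K)=(d^*\theta,0)$ has no content in $K$). The paper instead applies Hamilton's theorem to the tamed-pair system with $K_2(\omega_J,J)$, which is elliptic on $J$, and then deduces uniqueness for \eqref{Hflow3} by showing every compatible solution of \eqref{Hflow3} also solves that system. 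Your overall two-pronged outline (DeTurck for existence, Hamilton for uniqueness) matches the paper's, but both prongs rest on the $K_2$ construction that your proposal does not supply.
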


We use essentially the DeTurck's trick for  the first proof of the existence: modifying by a suitable diffeomorphism we show that \eqref{Hflow3} is equivalent to a strictly parabolic system. However as we mentioned above, this equivalence modulo diffeomorphism is achieved in a very indirect way and it is involved technically. We roughly sketch our strategy. We can consider the modified system, for a vector field $X$,
\begin{equation}\label{Hflow31}
\begin{split}
&\frac{\p \omega}{\p t}+d^*d\omega=L_X\omega\\
&\frac{\p J}{\p t}=K_1(\omega, J)+L_XJ,
\end{split}
\end{equation}
However this system fails to be parabolic in an obvious way. Note that $K_1$ does not involve second derivatives of $J$, while $L_YJ$ (and hence  $K_1+L_YJ$) cannot be an elliptic operator (on $J$) for any choice of  $Y$ (regardless $Y$ involves derivatives of $J$ or not).
Instead we study a system as follows,
\begin{equation}\label{Hflow4-1}
\begin{split}
&\frac{\p \omega}{\p t}=\theta=-d^*d\omega+L_X\omega\\
&\frac{\p J}{\p t}=K=K_2(\omega, J)+L_XJ,
\end{split}
\end{equation}
The system we propose is indeed strictly parabolic (in formal sense) for $(\omega, J)$ by suitable choices of $X$ and $K_2$. 
Here are several key features of $X$ and $K_2$. 
\begin{enumerate}

\item The vector field $X$ involves only first derivatives of $\omega$, and  makes $-d^*d\omega+L_X\omega$ elliptic on $\omega$. But $X$ does not involves first derivatives of $J$, hence $-d^*d\omega+L_XJ$ does not involve second derivatives of $J$.   ( It also explains that we use the operator $-d^*d$ instead of Hodge Laplacian $\Delta$).

\item We have to enlarge our consideration for tame pairs.  This allows us to choose  $K_2$ such that it is an elliptic operator on $J$, hence $K_2+L_XJ$ is also an elliptic operator on $J$ (since $X$ does not involve derivatives of $J$ and $L_XJ$ does not contribute second derivatives of $J$).

\item In general, $K_2\neq K_1$ for a tamed pair $(\omega, J)$. But when $(\omega, J)$ is a compatible pair, we have that $K_2(\omega, J)=K_1(\omega, J)$. 

\end{enumerate}
Then we  apply the standard parabolic theory for the  more general system for a tamed pair $(\omega, J)$ (note that for a compatible pair $\omega$ and $J$, the second derivatives of $\omega$ and $J$ are partially involved) to get a unique smooth short time solution.  
We then show that the compatible condition is preserved if the initial pair is compatible, and prove that $K_2=K_1$ for a compatible pair. Eventually after a gauge transformation induced by $-X$, we show that \eqref{Hflow3} has a short time smooth solution. \\

To begin with, we suppose $\omega$ is a nondegenerate two-form and $J$ is an almost complex structure  such that $(\omega, J)$ is a tamed pair. We fix a background metric $\bar g$ and denote its Levi-Civita connection by $\bar \nabla$. We study the following system for a tamed pair $(\omega, J)$, 
\begin{equation}\label{Hflow4}
\begin{split}
&\frac{\p \omega}{\p t}=-d^*d(\omega_J)+L_X(\omega_J)+g^{pq}\bar \nabla_p\bar \nabla_q (\omega_{J_{-}})\\
&\frac{\p J}{\p t}=K(\omega, J)=K_2(\omega_J, J)+L_XJ,
\end{split}
\end{equation}
where $d^*$ is defined by  $g$, which is the almost Hermitian metric determined by $(\omega_J, J)$. We emphasize that $X$ and $K_2$ are also determined by the almost Hermitian structure  $(\omega_J, J)$.

For simplicity, first we discuss a compatible pair $(\omega, J)$ and determine the corresponding $X$ and $K_2$. We need some preliminary facts, in particular for the operator $d^*d$. Consider an operator $S(X, Y): A^p\rightarrow A^{p-1}$, for any $\eta\in A^p$, 
\[
S(X, Y)\eta=\nabla_X (\iota_Y \eta)-\iota_{(\nabla_X Y)}\eta.
\]
Direct computation shows that $S(X, Y)\eta$ is tensorial in both $X, Y$ (but not in $\eta$).  Indeed we have the following identity, 
\[
d^{*} \eta=-\sum S(e_i, e_i)\eta=-g^{kl}S(X_k, X_l)\eta,
\]
where $\{e_i\}$ is an orthonormal basis and $\{X_k\}$ is a basis such that $g_{kl}=g(X_k, X_l)$.
In particular, for any two-form $\psi=\psi_{ij}dx^i\wedge dx^j$ in a local coordinate ($\psi_{ij}+\psi_{ji}=0$), we have
\begin{equation}\label{E-dstar}
d^{*}\psi=-2g^{kl}\left(\frac{\p \psi_{lj}}{\p x^k}-\psi_{lp}\Gamma^p_{kj}-\psi_{pj}\Gamma^p_{kl}\right) dx^j=-2g^{kl}\psi_{lj, k}dx^j
\end{equation}
Similarly for a three-form $\eta=\eta_{ijk}dx^i\wedge dx^j\wedge dx^k$, we have,
\[
d^*\eta=-3g^{pq}\eta_{pjk, q} dx^j\wedge dx^k,
\]
where the covariant derivative is given by
\[
\eta_{pjk, q}=\frac{\p \eta_{pjk}}{\p x^q}-\eta_{ljk}\Gamma^l_{pq}-\eta_{plk}\Gamma^l_{qj}-\eta_{pjl}\Gamma^l_{qk}
\]
We compute that for a given two form $\psi=\psi_{ij}dx^i\wedge dx^j$, 
\begin{equation}\label{E-harmonic}
\begin{split}
d^*d\psi=&-g^{pq}\left(\left(\nabla_{\p_p} \iota_{\p_q}-\iota_{\nabla_{\p_p}\p_q}\right)\left(\frac{\p \psi_{ij}}{\p x_k}dx^k\wedge dx^i\wedge dx^j\right)\right)\\
=&-g^{pq}\left(\p^2_{p, q}\psi_{ij}+\p^2_{p, i}\psi_{jq}-\p^2_{p, j}\psi_{qi}\right)+O(\p g, \p \psi),
\end{split}
\end{equation}
where $O(\p g, \p \psi)$ denotes the terms of at most degree one and it vanishes in a normal coordinate such that $\p g=0$ at one point. In \eqref{E-harmonic} we drop the factor $dx^i\wedge dx^j$ in the last line. By \eqref{E-harmonic}, we have
\[
-d^*d\omega =g^{pq}\left(\p^2_{p, q}\omega_{ij}+\p^2_{p, i}\omega_{jq}-\p^2_{p, j}\omega_{iq}\right)+O(\p g, \p \omega). 
\]

\begin{prop}\label{P-X}For an almost Hermitian structure $(\omega, J)$, there exists a vector field $X=X(\omega, J, \bar g)$ such that $-d^*d\omega+L_X\omega$ is elliptic on $\omega$ but has no second derivatives of $J$.
\end{prop}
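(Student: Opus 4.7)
The plan is to implement a DeTurck-style gauge fixing adapted to the second-order operator $-d^*d$, by constructing $X$ as an algebraic expression in the $1$-jet of $\omega$ with $J$ and $\bar g$ entering only as tensorial coefficients. Because $X$ will carry no derivatives of $J$, the second derivatives of $J$ that would otherwise arise through $\partial X$ inside $L_X\omega$ do not appear; and because $d^*d\omega$ itself involves $J$ only through the algebraic coefficients $g^{pq}(\omega_J, J)$ in \eqref{E-harmonic}, the resulting operator $-d^*d\omega + L_X\omega$ is free of second derivatives of $J$ by construction.

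The first step is to read off the principal symbol of $-d^*d$ from \eqref{E-harmonic}. A short calculation gives
\[
\sigma(-d^*d)(\xi)\psi \;=\; -|\xi|^2\psi \;+\; \xi\wedge \iota_{\xi^\sharp}\psi,
\]
where $\xi^\sharp$ is raised using $g$. The kernel of this symbol consists precisely of the ``symbolically exact'' $2$-forms $\xi\wedge\alpha$, which is the usual failure of ellipticity for $d^*d$ on forms. The remedy is to cancel this kernel by a Lie derivative term.

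Next, I define
\[
X^k \;=\; g^{kl}\,g^{pq}\,\bar\nabla_p\omega_{ql},
\]
or a similar first-order expression in $\omega$ built from $\bar\nabla$ and from $g^{-1}$ (which depends algebraically on $(\omega_J, J)$). Since $\bar g$ and $\bar\nabla$ are fixed data and $g^{-1}$ is algebraic in $(\omega,J)$, the components $X^k$ involve no derivatives of $J$. Writing $L_X\omega = d(\iota_X\omega)+\iota_Xd\omega$, only the first term contributes to the principal symbol, and a direct computation shows
\[
\sigma(L_X)(\xi)\psi \;=\; c\,\xi\wedge \iota_{\xi^\sharp}\psi \;+\; (\text{terms tangent to }\sigma(-d^*d)),
\]
with a computable numerical constant $c$. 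Combining with Step~1, and rescaling $X$ by a fixed constant if necessary, one can arrange
\[
\sigma(-d^*d + L_X)(\xi)\psi \;=\; -|\xi|^2\psi \;+\; \lambda\,\xi\wedge\iota_{\xi^\sharp}\psi
\]
with $\lambda\neq 1$, so that on the splitting $\Lambda^2 = (\xi\wedge\Lambda^1)\oplus\{\psi : \iota_{\xi^\sharp}\psi = 0\}$ the symbol acts as $(\lambda-1)|\xi|^2$ on the first factor and $-|\xi|^2$ on the second, giving a linear isomorphism for every $\xi\neq 0$.

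The main obstacle is the combined bookkeeping: the DeTurck vector for Ricci flow is built from the difference $g^{ij}(\Gamma^k_{ij}-\bar\Gamma^k_{ij})$, but since the Levi--Civita $\Gamma$ of $g(\omega_J,J)$ carries first derivatives of $J$, using it here would reintroduce second derivatives of $J$ through $\partial X$ and defeat the purpose. One must therefore replace the Levi--Civita piece by the fixed background $\bar\nabla$ and verify that the resulting $X$, although not geometrically canonical, still produces the correct principal symbol after cancellation. The two requirements---killing the exact-form kernel of $\sigma(-d^*d)$ while keeping $X$ free of $\partial J$---are compatible precisely because the principal symbol of $-d^*d$ already has its $J$-dependence confined to algebraic factors, so any algebraic-in-$J$ choice of $X$ has a chance to succeed; verifying that the specific choice above makes the combined symbol isomorphism is then a routine symbol calculation.
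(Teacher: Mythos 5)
Your overall strategy is the paper's: build $X$ from the $1$-jet of $\omega$ with $J$ entering only algebraically, observe that $d^*d\omega$ itself carries only first derivatives of $J$, and use the $\partial X$ part of $L_X\omega$ to cancel the degenerate directions $\xi\wedge\Lambda^1$ of $\sigma(-d^*d)$. But your explicit choice $X^k=g^{kl}g^{pq}\bar\nabla_p\omega_{ql}$ does not do this, and the gap is not repairable by a constant rescaling. The second-order contribution of $L_X\omega$ comes from $d(\iota_X\omega)$, i.e.\ from $\partial_i(X^l\omega_{lj})-\partial_j(X^l\omega_{li})$, so what must match $g^{pq}\bar\nabla_p\omega_{qj}$ to leading order is the contraction $X^l\omega_{lj}$, not $X^l g_{lj}$. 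With your $X$ one gets $X^l\omega_{lj}=\pm J^m_{\ j}\,g^{pq}\bar\nabla_p\omega_{qm}$ (since $g^{kl}\omega_{lj}=\pm J^k_{\ j}$ for a compatible pair), so the symbol of $L_X$ is $\xi\wedge\bigl(\pm J^{T}\iota_{\xi^\sharp}\psi\bigr)$ rather than $c\,\xi\wedge\iota_{\xi^\sharp}\psi$: the $J$-twist prevents the cancellation you assert. Worse, the combined symbol still has a kernel: on $\psi=\xi\wedge\alpha$ with $\iota_{\xi^\sharp}\alpha=0$ the term $-|\xi|^2\psi+\xi\wedge\iota_{\xi^\sharp}\psi$ vanishes and what remains is $\xi\wedge(\pm|\xi|^2 J^{T}\alpha)$, which is zero for $\alpha=J^{T}\xi$ (note $J^{T}\xi\perp\xi$ by compatibility). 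So your operator is still degenerate. The paper's fix is exactly to raise the index with $\omega^{-1}$ instead of $g^{-1}$: setting $A_j=X^l\omega_{lj}-g^{pq}\bar\nabla_p\omega_{qj}$, it suffices that $\partial_iA_j-\partial_jA_i=O(1)$, realized by $X^k=g^{pq}\bar\nabla_p(\omega_{qj})\,\omega^{jk}$ with $\omega_{lj}\omega^{jk}=\delta^k_l$; then the cross terms $g^{pq}(\partial^2_{p,i}\omega_{jq}-\partial^2_{p,j}\omega_{iq})$ in \eqref{E-harmonic} cancel exactly and one is left with $g^{pq}\partial^2_{p,q}\omega_{ij}+O(1)$.

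A secondary point: even if you had produced a symbol of the form $-|\xi|^2\psi+\lambda\,\xi\wedge\iota_{\xi^\sharp}\psi$, the condition $\lambda\neq 1$ only gives invertibility, whereas the subsequent use of this proposition (parabolicity of \eqref{Hflow4}, where the paper needs $\sigma L(\theta,\xi)\beta=|\xi|^2\beta$) requires the symbol to be definite, i.e.\ $\lambda<1$; the paper's choice achieves $\lambda=0$ on the nose. Your remarks about avoiding the Levi--Civita connection of $g(\omega_J,J)$ and using the fixed background $\bar\nabla$ so that $X$ carries no derivatives of $J$ are correct and agree with the paper.
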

\begin{proof}
We compute
\[
L_X\omega=\left(\frac{\p X^l}{\p x_i}\omega_{lj}-\frac{\p X^l}{\p x_j} \omega_{li}\right)dx_i\wedge dx_j+\iota_X (d\omega).
\]
We assume $X$ does not involve derivatives of $J$. 
Note that $\iota_X (d\omega)$ only involves the terms which has at most first derivatives of $\omega, J$, hence the leading term of $-d^*d\omega+L_X\omega$ reads
\[
\left(\frac{\p X^l}{\p x_i}\omega_{lj}-\frac{\p X^l}{\p x_j} \omega_{li}\right)+g^{pq}\left(\p^2_{p, q}\omega_{ij}+\p^2_{p, i}\omega_{jq}-\p^2_{p, j}\omega_{iq}\right).
\]
There are many choices of $X$ satisfying the properties required. Indeed, let 
\[
A_j=X^l\omega_{lj}-g^{pq}\bar \nabla_p\omega_{qj}.
\]
If we choose  $X$ satisfying  $\p_iA_j-\p_j A_i=O(1)$, where $O(1)$ denotes the terms which involve at most the first derivative of $g, \omega, J$, 
a direct computation gives
\[
-d^*d \omega+L_X\omega= g^{pq}\p^2_{p, q} \omega_{ij}+O(1).
\]
An explicit example is by taking $A_j=0$ and $X$ is determined by 
\begin{equation}\label{E-x}
X^k(\omega, J, \bar g)=g^{pq}\bar \nabla_p (\omega_{qj})\omega^{jk},
\end{equation}
where $\omega^{jk}$ is the inverse of $\omega_{ij}$, namely, it satisfies $\omega_{ij}\omega^{jk}=\delta^k_i$. 
\end{proof}
Next we describe $K_2$.  We define a two form $\psi=\psi_{ij}$ such that
\begin{equation}\label{E-s}
-d^*d\omega=g^{pq}\bar \nabla_p\bar \nabla_q\omega+\psi.
\end{equation}
Then we have
\begin{equation}\label{E-t}
-d^*d\omega+L_X\omega=g^{pq}\bar \nabla_p\bar \nabla_q\omega+\psi+L_X\omega
\end{equation}
Note that $\psi$ involves second derivative of $\omega$ but only first derivative of $J$.  Let $K_2$ be given by
\begin{equation}\label{E-k}
K_2(\omega, J)=\frac{1}{2}g^{pq}\left(\bar \nabla_p\bar \nabla_q J_i^j-\bar \nabla_p\bar \nabla_q (J_a^b) g_{ib}g^{aj}\right)+L^j_i,
\end{equation}
where $L_i^j$ does not involve second derivatives of $J$ and it will be specified as follows.

\begin{prop}\label{P3-k12} For a compatible pair $(\omega, J)$, we can choose $L=L(\omega, J)$  such that \[
K_1(\omega, J)=K_2(\omega, J),
\]
where $K_1$, $K_2$ are given in \eqref{E3-k1} and \eqref{E-k} respectively; in particular $JK_2+K_2J=0$. An important point is that $L$ does not involves second derivatives (or above) of $J$.
\end{prop}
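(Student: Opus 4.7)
The plan is to reduce $(-d^{*}d\omega)_{J_{-}}$, which defines $K_{1}$, to the form prescribed in \eqref{E-k} on compatible pairs, with everything that is not a second derivative of $J$ absorbed into $L$. The key tool is the pointwise identity $\omega_{ij}=-g_{ik}J^{k}_{j}$ together with its partner $g_{ij}=\omega_{ik}J^{k}_{j}$, both valid on compatible pairs.

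First I would differentiate $\omega_{J_{-}}\equiv 0$ twice along $\bar\nabla$. Using the contractions $J^{l}_{j}\omega_{kl}=g_{kj}$ and $J^{k}_{i}\omega_{kl}=-g_{il}$, this produces
\[
\bar\nabla^{2}_{p,q}\omega_{ij}-J^{k}_{i}J^{l}_{j}\bar\nabla^{2}_{p,q}\omega_{kl}
=g_{kj}\bar\nabla^{2}_{p,q}J^{k}_{i}-g_{ik}\bar\nabla^{2}_{p,q}J^{k}_{j}+O(\partial\omega,\partial J),
\]
where $O(\partial\omega,\partial J)$ denotes terms of at most first order in derivatives of $\omega$ and $J$. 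Contracting with $g^{pq}$ and writing $H^{k}_{i}:=g^{pq}\bar\nabla^{2}_{p,q}J^{k}_{i}$, the left side becomes $2[(\bar\Delta\omega)_{J_{-}}]_{ij}$ (with $\bar\Delta$ the rough Laplacian), and the right side is the $g$-lowered form of $H-H^{*}$, where $H^{*}$ is the $g$-adjoint of $H$. Raising the upper index back by $g^{-1}$, this identifies the endomorphism associated to $(\bar\Delta\omega)_{J_{-}}$ via $g$ with the explicit $\bar\nabla^{2}J$ part of \eqref{E-k}, modulo lower-order terms.

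Next I would decompose the principal part of $-d^{*}d\omega$ using \eqref{E-harmonic} into $g^{pq}\bar\nabla^{2}_{p,q}\omega_{ij}$ plus a mixed term $g^{pq}(\bar\nabla^{2}_{p,i}\omega_{jq}-\bar\nabla^{2}_{p,j}\omega_{iq})$. Applying \eqref{E-dstar} twice, the mixed term equals a scalar multiple of $[d(d^{*}\omega)]_{ij}$ modulo $O(\partial\omega,\partial J)$, an object that is second order in $\omega$ but carries no second derivatives of $J$ (since $d^{*}$ is first order in $\omega$ and its $J$-dependence appears only through the undifferentiated $g=g(\omega,J)$, so applying another $d$ brings in at most first derivatives of $J$ via $\bar\nabla g$). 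Thus $(-d^{*}d\omega)_{J_{-}}=(\bar\Delta\omega)_{J_{-}}+\Phi+O(\partial\omega,\partial J)$ with $\Phi$ a two form free of second derivatives of $J$.

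Putting the pieces together, I define
\[
L^{j}_{i}:=(K_{1})^{j}_{i}-\tfrac{1}{2}g^{pq}\bigl(\bar\nabla^{2}_{p,q}J^{j}_{i}-\bar\nabla^{2}_{p,q}J^{b}_{a}g_{ib}g^{aj}\bigr).
\]
By the first step, the $(\bar\Delta\omega)_{J_{-}}$ contribution in $K_{1}$ cancels the explicit $\bar\nabla^{2}J$ part of \eqref{E-k}, so $L$ consists only of the endomorphism lift of $\Phi$ together with lower-order terms, all free of second derivatives of $J$. The identity $K_{2}=K_{1}$ on compatible pairs then holds by construction, and $JK_{2}+K_{2}J=0$ is automatic because $K_{1}$, being the $g$-dual of an anti-$J$-invariant two form, anti-commutes with $J$. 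The main technical obstacle is the derivation of the key identity in the first step, requiring one to differentiate both compatibility relations, eliminate $\bar\nabla^{2}g$ via $g=\omega J$, and use $J^{2}=-1$ to cancel the two $\bar\nabla^{2}\omega$ contributions; the rest is local tensor algebra supported by the contraction identities between $\omega$, $J$, and $g$.
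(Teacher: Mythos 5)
Your proposal is correct and follows essentially the same route as the paper: you differentiate the compatibility relation $\omega_{ij}=\omega_{kl}J^k_iJ^l_j$ twice to relate $(g^{pq}\bar\nabla_p\bar\nabla_q\omega)_{J_-}$ to the explicit $\bar\nabla^2 J$ part of \eqref{E-k} modulo first-order terms, split $-d^*d\omega$ as $g^{pq}\bar\nabla_p\bar\nabla_q\omega$ plus a remainder free of second derivatives of $J$, and let $L$ absorb everything else --- which is exactly the paper's choice $g(Lx,y)=\psi_{J_-}+\tfrac12\phi(x,y)$. The identification of the mixed term with $d(d^*\omega)$ is an unnecessary but harmless extra observation; the paper simply notes by inspection of \eqref{E-harmonic} that $\psi$ carries no second derivatives of $J$.
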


\begin{proof}
For a compatible pair, we have
\[
\omega_{ij}=\omega_{kl}J^k_iJ^l_j.
\]
Hence by taking derivatives, we have
\[
\bar\nabla_p\bar\nabla_q(\omega_{ij})=\bar\nabla_p\bar\nabla_q\left(\omega_{kl}J^k_iJ^l_j\right).
\]
It follows that
\[
\bar\nabla_p\bar\nabla_q(\omega_{ij})-\bar\nabla_p\bar\nabla_q(\omega_{kl})J^k_iJ^l_j=\bar\nabla_p\bar\nabla_q(J^k_i) g_{kj}-\bar\nabla_p\bar\nabla_q(J^l_j) g_{li}+O(1).
\]
We define a two form $\phi$ of type $O(1)$ by
\begin{equation}\label{E-p}
\phi_{ij}=g^{pq}(\bar\nabla_p\bar\nabla_q\omega_{ij}-J^k_iJ^l_j\bar\nabla_p\bar\nabla_q\omega_{kl}-g_{kj}\bar\nabla_p\bar\nabla_qJ^k_i +g_{li}\bar\nabla_p\bar\nabla_qJ^l_j)
\end{equation}
We can then define $L$ by
\begin{equation}\label{E-b}
g(Lx, y)=\psi_{J_{-}}+\frac{1}{2}\phi(x, y).
\end{equation}
Clearly $L^j_i$ does not involve second derivatives of $J$ since $\phi$ is of type $O(1)$ and $\psi$ does not involve second derivative of $J$. By \eqref{E-k} and \eqref{E-p} we get, 
\begin{equation*}\begin{split}
g(K_2\p_i, \p_j)=&\frac{1}{2}\left(g_{jk}\bar\nabla_p\bar\nabla_qJ^k_i-g_{li}\bar\nabla_p\bar\nabla_qJ^l_j\right)+g(L\p_i, \p_j)\\
&=\frac{1}{2}g^{pq}\left(\bar\nabla_p\bar\nabla_q\omega_{ij}-J^k_iJ^l_j\bar\nabla_p\bar\nabla_q\omega_{kl}\right)+\psi_{J_{-}}(\p_i, \p_j)\\
&=(g^{pq}\bar\nabla_p\bar\nabla_q\omega+\psi)_{J_{-}}(\p_i, \p_j)
\end{split}
\end{equation*}
In other words, $K_2$ can be characterized by
\begin{equation}\label{E3-k1-k2}
g(K_2x, y)=\left(g^{pq}\bar\nabla_p\bar\nabla_q\omega+\psi\right)_{J_{-}}=-g(x, K_2 y).
\end{equation}
We can also write the above as
\begin{equation}\label{E3-k22}
\omega(K_2x, Jy)=\omega(Jx, K_2 y)=\left(g^{pq}\bar\nabla_p\bar\nabla_q\omega+\psi\right)_{J_{-}}(x, y)
\end{equation}
Since $-d^*d\omega=g^{pq}\bar\nabla_p\bar\nabla_q\omega+\psi$ by definition of $\psi$, we see that $K_1=K_2$.
\end{proof}

\begin{prop} $K_2$ is an elliptic operator on $J$. Hence the system \eqref{Hflow4-1} is parabolic for $(\omega, J)$ (in formal sense).
\end{prop}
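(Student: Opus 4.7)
The plan is to compute the principal symbol of $K_2$ as a second-order differential operator in $J$ and then, together with Proposition \ref{P-X}, read off the symbol of the full linearization of \eqref{Hflow4-1}.

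First I would observe that by construction the lower-order correction $L$ in \eqref{E-b} carries no second derivative of $J$: indeed $\psi$ only involves $\bar\nabla^2\omega$, while $\phi$ is of type $O(1)$ thanks to the compatibility cancellation identified in the proof of Proposition \ref{P3-k12}. Hence the principal $J$-symbol of $K_2$ comes entirely from the main expression
\[
\tfrac{1}{2}g^{pq}\bigl(\bar\nabla_p\bar\nabla_q J^j_i-\bar\nabla_p\bar\nabla_q(J^b_a)\,g_{ib}\,g^{aj}\bigr),
\]
and linearizing in a variation $H$ of $J$ gives the principal symbol
\[
\sigma DK_2(\xi)(H) \;=\; -\tfrac{|\xi|_g^{2}}{2}\bigl(H - H^{*}\bigr),
\]
where $(H^{*})^{j}_{i}=g^{ja}g_{bi}H^{b}_{a}$ is the $g$-adjoint of $H$.

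Next I would identify the tangent space in which $H$ must be interpreted. An infinitesimal variation of $J$ lies in $V_J=\{H\in\mathrm{End}(TM):HJ+JH=0\}$, and since $J$ is $g$-skew the involution $H\mapsto H^{*}$ preserves $V_J$ and decomposes it as $V_J=V_J^{\Lambda}\oplus V_J^{S}$ into $g$-skew and $g$-symmetric parts. On $V_J^{\Lambda}$ one has $H-H^{*}=2H$, so the symbol above becomes $-|\xi|_g^{2}\,H$, a strictly negative scalar multiple of the identity; this is the content of ``$K_2$ is elliptic on $J$.'' The complementary piece $V_J^{S}$ is precisely the free $\tilde B$-direction of Proposition \ref{P-infinitisimal}; for the system \eqref{Hflow4-1} it is suppressed because $K_2=K_1$ (by Proposition \ref{P3-k12}) already lies in $V_J^{\Lambda}$, and $L_XJ$ is tangent to the orbit of the diffeomorphism group of a compatible pair, so the evolution preserves compatibility and the effective variation lives in $V_J^{\Lambda}$.

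Finally I would assemble the full symbol matrix of \eqref{Hflow4-1} on the tangent pair $(\theta,H)$. Proposition \ref{P-X} gives that the $\omega$-equation is strictly elliptic on $\theta$, and a small check shows it carries no second-order contribution from $H$: the vector field $X$ from \eqref{E-x} is algebraic in $J$, and $d^{*}$ depends on $J$ only through the first-order Christoffel symbols of $g$. Hence the principal symbol is block-lower-triangular with both diagonal blocks strictly negative multiples of the identity on $\Lambda^{2}\oplus V_J^{\Lambda}$, yielding the claimed parabolicity in the formal sense. The main subtlety, which motivates the qualifier \emph{formal} and the more elaborate DeTurck-type/Hamilton-type arguments in the subsequent theorems, is precisely that the abstract symbol of $K_2$ on $V_J$ has the nontrivial kernel $V_J^{S}$: a rigorous short-time existence requires confirming that this apparent degeneracy is only a gauge/compatibility artifact and not a genuine loss of parabolicity.
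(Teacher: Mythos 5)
Your symbol computation agrees with the one in the paper: linearizing the leading term of \eqref{E-k} in a variation $H$ of $J$ gives (up to the sign convention for symbols) $\tfrac{1}{2}|\xi|_g^2(H-H^{*})$, which is \eqref{E3-k2}. The divergence, and the gap, lies in what you do with the kernel of this map. You correctly observe that $\tfrac{1}{2}(H-H^{*})$ is the projection of $V_J=\{H:HJ+JH=0\}$ onto its $g$-skew part $V_J^{\Lambda}$ and vanishes on the $g$-symmetric part $V_J^{S}$, and you then argue that $V_J^{S}$ is ``suppressed'' because $K_2=K_1$ takes values in $V_J^{\Lambda}$ and the flow preserves compatibility. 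That is not a parabolicity argument. The system \eqref{Hflow4-1} is posed on the space of pairs $(\omega,J)$ with $J$ an almost complex structure, so to invoke standard parabolic existence theory (as the following proposition does) the linearization must be elliptic on the whole tangent space $\Gamma(\L^2M\oplus V_J)$, not on a subbundle. The fact that the right-hand side lands in $V_J^{\Lambda}$, or that compatibility is preserved (Proposition \ref{P-c} --- itself a consequence of a solution that has not yet been constructed), does not entitle you to discard the $V_J^{S}$ directions of the linearization; and by Proposition \ref{P-infinitisimal} even the tangent space to the set of compatible pairs contains every $\tilde B\in S^{J_{-}}$, so restricting to compatible pairs does not remove the alleged kernel either. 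A symbol equal to $|\xi|^2$ times a proper projection is precisely the kind of weak parabolicity that the paper handles elsewhere only via integrability conditions and Hamilton's theorem, so ``block-lower-triangular with an identity block on $V_J^{\Lambda}$ only'' cannot be the end of the proof.

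Note also that the paper's proof claims something stronger than what you establish: after \eqref{E3-k2} it asserts $\sigma L(K,\xi)P^j_i=|\xi|^2P^j_i$ for the variation $P$ itself, i.e.\ it uses $P+P^{*}=0$ for the variation exactly as $J+J^{T}=0$ is used for $J$, making the second diagonal block the full identity rather than a projection. So you must either justify why the relevant variations of $J$ may be taken $g$-skew (for instance by an appropriate parametrization of the configuration space), or, if you stand by the computation that the symbol genuinely annihilates $V_J^{S}$, acknowledge that you have located a difficulty with the proposition as stated rather than a proof of it. Finally, the qualifier ``formal'' in the statement refers to the nonlinearity of the space of almost complex structures coming from $J^2=-id$ (handled by the chart $\pi$ in the next proposition), not to a degenerate symbol, so it cannot be used to absorb the kernel $V_J^{S}$.
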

\begin{proof}
We compute the symbol of $K_2$ (on $J$). By taking $g_{ij}=\delta_{ij}$ at one point, we have
\begin{equation}\label{E3-k2}
\sigma K_2(\xi) J^j_i=\frac{1}{2}|\xi|^2 (J_i^j-J^i_j)=|\xi|^2 J_i^j.
\end{equation}
The last equality follows since $J$ is compatible with $g$ and hence $JJ^T=id$ ($g=id$ at the point); this implies that $J+J^T=0$. 
By Proposition \ref{P-X},  the linearized operator of $\theta=-d^*d\omega+L_X\omega$ (on $\omega$) is elliptic, indeed for $\xi=(\xi_1, \cdots, \xi_{2n})$,
\[
\sigma L(\theta, \xi) \beta_{ij}=|\xi|^2 \beta_{ij}.
\]
It is also important to notice that $\theta$ does not involve second derivative of $J$ at all. By \eqref{E-k} and \eqref{E3-k2}, the linearized operator of $K$ (on $J$) is also elliptic (since $X$ does not involve any derivative of $J$, while $B$ involves only first derivative), and we have
\[
\sigma L(K, \xi) P^j_i=|\xi|^2P^j_i.
\]
Note that since $K$ contains second derivative of $\omega$, hence the operator $\sigma L(K) \beta_{ij}$ is not zero. Nevertheless, the whole symbol of the system $(\theta, K)$ on $(\beta, P)$ is given by (take $g_{ij}=\delta_{ij}$ at a point)
\[
\begin{pmatrix} I& 0\\
*& I
\end{pmatrix}
\]
Hence this verifies that \eqref{Hflow4-1} is strictly parabolic in formal sense.  
\end{proof}

Now we are ready to prove that \eqref{Hflow4} has a smooth short time solution for a tamed pair $(\omega, J)$. First note that $(\omega_J, J)$ determines an almost Hermitian structure,  we can then define $X, K_2$ (and $L$) as in \eqref{E-x}, \eqref{E-k} and \eqref{E-b} respectively, using $(\omega_J, J)$  (with $\omega$ replaced by $\omega_{J}$ correspondingly).
With this understanding of $X, K_2$, then we have, 

\begin{prop}There exists a unique smooth short time solution of \eqref{Hflow4}.
\end{prop}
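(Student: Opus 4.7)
The plan is to combine the symbol computations from the preceding propositions into a full parabolicity check for \eqref{Hflow4}, and then invoke standard short-time existence for quasilinear parabolic systems on a closed manifold.

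First I would assemble the full principal symbol of \eqref{Hflow4}, viewed as a quasilinear second-order operator on the pair $(\omega, J)$ with $J$ treated as a section of $\mathrm{End}(TM)$. Since $\omega_J$ and $\omega_{J_{-}}$ depend on $(\omega, J)$ only algebraically, and their sum is just $\omega$, the three second-order pieces $-d^*d(\omega_J)$, $L_X(\omega_J)$, and $g^{pq}\bar\nabla_p\bar\nabla_q(\omega_{J_{-}})$ combine at the top-order level into a rough Laplacian $g^{pq}\partial_p\partial_q \omega_{ij}$: the ostensible contributions in $\partial^2 J$ that would appear separately inside $\partial^2(\omega_J)$ and $\partial^2(\omega_{J_{-}})$ cancel, while the remaining $\partial^2 \omega$ contributions are controlled by Proposition \ref{P-X} applied to $(\omega_J, J)$ together with the manifestly elliptic rough Laplacian on $\omega_{J_{-}}$. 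In particular the first equation of \eqref{Hflow4} is elliptic on $\omega$ with principal symbol $|\xi|_g^2 I$ and contains no second derivatives of $J$, since the vector field $X$ in \eqref{E-x} involves no derivatives of $J$ and the metric $g$ depends on $J$ only algebraically. Together with the symbol computation for the $J$-equation carried out in the immediately preceding proposition, one obtains the block-triangular principal symbol
\begin{equation*}
\sigma(\xi) = \begin{pmatrix} |\xi|_g^2\, I & 0 \\ * & |\xi|_g^2\, I \end{pmatrix},
\end{equation*}
where $*$ is the bundle map encoding the second derivatives of $\omega$ inside $K_2$. All eigenvalues equal $|\xi|_g^2 > 0$ for $\xi \neq 0$, so \eqref{Hflow4} is strictly (Petrovskii) parabolic at the initial tamed pair $(\omega_0, J_0)$.

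With strict parabolicity in hand, short-time existence and uniqueness of smooth solutions follow from standard quasilinear parabolic theory on a compact manifold: linearize around $(\omega_0, J_0)$, solve the resulting linear system via classical $L^2$- or H\"older-theory, and close a contraction argument; equivalently one may invoke Hamilton's Theorem \ref{thm-h} with trivial integrability operator $L \equiv 0$, which reduces the hypothesis to the strictly parabolic case. Nondegeneracy of $\omega(t)$ and tameness of $(\omega(t), J(t))$ are open conditions preserved continuously in $t$, so the solution remains a tamed pair for a short time.

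The main subtlety I expect is preservation of the algebraic constraint $J(t)^2 = -\mathrm{id}$: the ambient PDE \eqref{Hflow4} treats $J$ as an arbitrary section of $\mathrm{End}(TM)$, and one must check that the right-hand side $K = K_2 + L_X J$ of the $J$-equation anticommutes pointwise with $J$, so that the pointwise ODE $\partial_t J = K$ preserves $J^2 = -\mathrm{id}$. For $K_2$ this follows from the identity $g(K_2 x, y) = -g(x, K_2 y)$ of \eqref{E3-k1-k2} applied to the compatible pair $(\omega_J, J)$, which forces $JK_2 + K_2 J = 0$ since $J$ is $g$-orthogonal; and $L_X J$ preserves the class of almost complex structures because it is the infinitesimal generator of pullback by the flow of $X$. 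Uniqueness is then inherited from uniqueness for the underlying strictly parabolic system.
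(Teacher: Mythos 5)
Your symbol computation is exactly the paper's: the $\p^2 J$ contributions hidden in $\bar\nabla^2(\omega_J)$ and $\bar\nabla^2(\omega_{J_-})$ cancel because $\omega_J+\omega_{J_-}=\omega$, the first equation is elliptic in $\omega$ with no second derivatives of $J$, and the full symbol is lower block-triangular with $|\xi|^2 I$ on the diagonal. Up to that point the argument agrees with the paper and is fine.

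The gap is in how you handle the constraint $J^2=-\mathrm{id}$. First, $\p_t J=K_2(\omega_J,J)+L_XJ$ is not a ``pointwise ODE'': the leading term of $K_2$ in \eqref{E-k} is $\tfrac12 g^{pq}(\bar\nabla_p\bar\nabla_q J_i^j-\bar\nabla_p\bar\nabla_q(J_a^b)g_{ib}g^{aj})$, a second-order spatial operator, so preservation of $J^2=-\mathrm{id}$ is a PDE statement, not an ODE statement. Second, and more seriously, the anticommutation $JK_2+K_2J=0$ that you want to use is derived (via \eqref{E3-k1-k2}) from the identities $g(J\cdot,J\cdot)=g$ and $J^2=-\mathrm{id}$; it is only known to hold \emph{at} endomorphisms that already are almost complex structures. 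If you enlarge the state space to all of $\mathrm{End}(TM)$, then at a time where $J^2\neq-\mathrm{id}$ you no longer know $JK+KJ=0$ (indeed $g(x,y)=\omega_J(x,Jy)$ need not even be symmetric or positive, so $d^*$, $X$ and $K_2$ are not defined without choosing some extension), and the preservation argument becomes circular. To close it you would at minimum need a well-defined extension of all the operators to a neighborhood of $\cJ$ in $\mathrm{End}(TM)$ together with a Gronwall or maximum-principle estimate for $F=J^2+\mathrm{id}$ of the form $|\p_t F|\le C|F|$ (after absorbing the second-order terms); none of this is automatic. The paper avoids the issue entirely by a different device: it localizes near $J_0$ using a diffeomorphism $\pi:T_{J_0}\cJ\to U_{J_0}\subset\cJ$ from the linear space of anti-$J_0$-invariant endomorphisms onto a neighborhood of $J_0$ in $\cJ$, rewrites the flow as the system \eqref{E3-l} for $(\omega,E)$ with $J=\pi E$, applies standard parabolic theory in the linear space $\L^2\oplus T_{J_0}\cJ$, and pushes the solution back. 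With that chart the solution stays in $\cJ$ by construction and no constraint-preservation argument is needed. You should either adopt that device or supply the missing extension-plus-Gronwall argument; as written the step does not go through.
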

\begin{proof}
We only need to show the system \eqref{Hflow4} is parabolic for a tamed pair $(\omega, J)$.  
By \eqref{E-k} and \eqref{E3-k2}, 
the linearized operator of $K$ (on $J$) is elliptic, and we have
\[
\sigma L(K, \xi) P^j_i= |\xi|^2P^j_i.
\]

By \eqref{Hflow4} and the definition of $X$, the operator $\theta(\omega_J, J)=-d^*d\omega_J+L_X\omega_J$ contributes (to the second derivatives of $\omega$ and $J$)
\[
g^{pq}\bar\nabla_p\bar \nabla_q(\omega_{J}).
\]
Hence for $\theta(\omega_J, J)+g^{pq}\bar\nabla_p\bar \nabla_q (\omega_{J-})$,  the second derivatives of $\omega$ and $J$ involved are of the form, 
\[
g^{pq}\bar\nabla_p\bar \nabla_q(\omega_{J})+g^{pq}\bar\nabla_p\bar \nabla_q(\omega_{J_{-}})=g^{pq}\bar\nabla_p\bar\nabla_q(\omega).
\]
It follows that the linearized operator of $\theta(\omega_{J}, J)+g^{pq}\bar\nabla_p\bar \nabla_q (\omega_{J_{-}})$ (on $\omega$) is elliptic.  It is also important to note that $\theta(\omega_{J}, J)+g^{pq}\bar\nabla_p\bar \nabla_q (\omega_{J-})$ does not involve second derivative of $J$. Now since $K$ is indeed involved with second derivative of $\omega$, hence the operator $\sigma L(K) \beta_{ij}$ is not zero. Nevertheless, the whole symbol of the system $(\theta, K)$ on $(\beta, P)$ is still given by (take $g_{ij}=\delta_{ij}$ at a point)
\[
\begin{pmatrix} I& 0\\
*& I
\end{pmatrix}
\]
Hence this equation is strictly parabolic on a tamed pair $(\omega, J)$. To finish the proof we might quote the standard parabolic theory. 

However there is one more technical point that the space of almost complex structures is not linear due to the fact that $J^2=-id$. This kind of phenomenon can be handled in a rather standard way; for example, a similar situation was considered in \cite{ST2}. The idea is that  we can localize the problem by considering almost complex structures near a fixed complex structure $J_0$. For an almost complex structure $J_0$, the space of almost complex structures (denoted by $\cJ$) near $J_0$ forms a Banach manifold (this can be viewed as a submanifold of the space of all nonsingular endomorphisms of $TM$), and the tangent space is modeled on anti-$J_0$ invariant endomorphisms (which covers the identity of $M$), namely $E: TM\rightarrow TM$ such that $J_0E+EJ_0=0$.  Hence, there exists a diffeomorphism (from a neighborhood of $0$ in $T_{J_0}\cJ$ to a neighborhood $U_{J_0}$ of $J_0$)
\[
\pi: T_{J_0}\cJ\rightarrow U_{J_0}\subset \cJ.
\]
We can then use $\pi$ to pull back almost complex structures in $U_{J_0}$ to the linear space $T_{J_0}\cJ$. Hence as in \eqref{Hflow4}, we can define a flow for a tamed pair $(\omega, E)$ by identifying $J=\pi E$, for $E$ is in a neighborhood of $T_{J_0}\cJ$. Indeed the equation reads, with the initial condition of a tamed pair $(\omega_0, J_0)$ ($E_0=0$),
\begin{equation}\label{E3-l}
\begin{split}
&\frac{\p \omega}{\p t }=-d^*d (\omega_{\pi E})+L_X (\omega_{\pi E})+g^{pq}\bar \nabla_p\bar \nabla_q (\omega_{\pi E_{-}}):=\theta(\omega, E)\\
&\frac{\p E}{\p t}=d\pi^{-1}_{\pi E}\left[K_2(\omega_{\pi E}, \pi E)+L_X (\pi E)\right]:=K(\omega, E),
\end{split}
\end{equation}
where $d\pi^{-1}$ is the tangent map of $\pi^{-1}$ at $\pi E$.  Since $K_2(\omega_{\pi E}, \pi E)+L_X (\pi E)$ is in the tangent space $T_{\pi E}\cJ$, hence $K(\omega, E)$ is well-defined and it is in the image of  $T_E (T_{J_0}\cJ)\cong T_{J_0} \cJ$. 
Hence this defines a flow in the space $\L^2\oplus T_{J_0} \cJ$. 
Note that at time $t=0$, 
the linearized operator of the system \eqref{E3-l} reads exactly the same as in \eqref{Hflow4}. We can then quote the standard parabolic theory to conclude that \eqref{E3-l} has a smooth short time solution. Now denote $J=\pi E$, it is then straightforward to verify that $(\omega, J)$ is a smooth short time solution of \eqref{Hflow4} with the initial tamed pair $(\omega_0, J_0)$.

\end{proof}

Now suppose we have a (short time) unique smooth solution $(\omega(t), J(t))$ of \eqref{Hflow4} with the initial data by an almost Hermitian structure $(\omega_0, J_0)$. We want to show that $(\omega(t), J(t))$ remains to be an almost Hermitian structure. 
\begin{prop}\label{P-c}The compatibility condition is preserved along \eqref{Hflow4}.
\end{prop}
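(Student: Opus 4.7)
The plan is to derive a linear parabolic PDE for the anti-$J$-invariant part $\eta := \omega_{J_{-}}$ of the evolving two-form, and then conclude from uniqueness of the linear Cauchy problem that $\eta\equiv 0$ is preserved under the flow. Differentiating the algebraic definition $2\eta(x,y) = \omega(x,y) - \omega(Jx,Jy)$ in time yields
\begin{equation*}
2\p_t \eta(x,y) \;=\; 2(\p_t \omega)_{J_{-}}(x,y) - \bigl[\omega(Kx,Jy) + \omega(Jx,Ky)\bigr],
\end{equation*}
where $K := \p_t J$. Writing $\p_t \omega = F+G$ with $F := -d^*d(\omega_J) + L_X(\omega_J)$ and $G := g^{pq}\bar\nabla_p\bar\nabla_q(\eta)$, I would stress that $F$, $K_2(\omega_J,J)$, and $X = X(\omega_J,J,\bar g)$ depend only on the compatible pair $(\omega_J,J)$ and are therefore independent of $\eta$, while $G$ is linear in $\eta$.

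The crux is to exhibit a cancellation. Decompose $\omega = \omega_J+\eta$ and $K = K_2+L_XJ$. The defining identity of $K_2$ given by Proposition \ref{P3-k12}, applied to the compatible pair $(\omega_J,J)$, reads
\begin{equation*}
\omega_J(K_2 x, Jy) + \omega_J(Jx, K_2 y) \;=\; 2(-d^*d\omega_J)_{J_{-}}(x,y),
\end{equation*}
and the diffeomorphism-variation identity (from the proposition on Lie derivatives) applied to the same compatible pair gives
\begin{equation*}
\omega_J((L_X J)x, Jy) + \omega_J(Jx, (L_X J)y) \;=\; 2(L_X\omega_J)_{J_{-}}(x,y).
\end{equation*}
Summing yields $\omega_J(Kx,Jy) + \omega_J(Jx,Ky) = 2F_{J_{-}}(x,y)$, which exactly cancels the $2F_{J_{-}}$ appearing in $2(\p_t\omega)_{J_{-}} = 2F_{J_{-}} + 2G_{J_{-}}$. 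The residual cross-term $\eta(Kx,Jy)+\eta(Jx,Ky)$ is manifestly linear in $\eta$, so I arrive at
\begin{equation*}
\p_t \eta(x,y) \;=\; G_{J_{-}}(\eta)(x,y) - \tfrac{1}{2}\bigl[\eta(Kx,Jy) + \eta(Jx,Ky)\bigr],
\end{equation*}
a linear PDE in $\eta$ whose coefficients are determined by the (already constructed) smooth solution $(\omega,J)$ of \eqref{Hflow4}.

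On the sub-bundle $\Lambda^{J_{-}}(T^*M)$ of anti-$J$-invariant two-forms, the leading term $(g^{pq}\bar\nabla_p\bar\nabla_q\eta)_{J_{-}}$ has symbol $-|\xi|_g^2\cdot \mathrm{id}$ (the anti-$J$-invariant projection acts as the identity on $\eta\in\Lambda^{J_{-}}$), so the equation is strictly parabolic. Compatibility of the initial data means $\eta(0)=0$, and $\eta\equiv 0$ is plainly a solution of this linear equation; hence uniqueness for smooth linear parabolic Cauchy problems forces $\eta(t)\equiv 0$ throughout the interval of existence, which is exactly the preservation of $\omega(Jx,Jy)=\omega(x,y)$.

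The main obstacle is ensuring the cancellation in the second paragraph: this works precisely because in the tamed-pair system \eqref{Hflow4} the operator $K_2$, the vector field $X$, and the $d^*$-operator are all built from the compatible pair $(\omega_J,J)$ rather than from $(\omega,J)$ itself, so the compatible-pair identities of Proposition \ref{P3-k12} and of Lie derivatives are available even though $(\omega,J)$ may fail to be compatible during the argument.
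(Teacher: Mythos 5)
Your proposal is correct and follows essentially the same route as the paper: you derive the linear evolution equation for $\eta=\omega_{J_{-}}$ by applying Proposition \ref{P3-k12} and the Lie-derivative variation identity to the compatible pair $(\omega_J,J)$ to cancel the $J$-anti-invariant part of $-d^*d\omega_J+L_X\omega_J$, and then conclude $\eta\equiv 0$ from the trivially satisfied zero initial condition. The paper closes the argument with a maximum principle or a Gronwall-type energy estimate rather than citing uniqueness for the linear parabolic Cauchy problem, but these are the same mechanism; your bookkeeping (including the factor $\tfrac12$ on the zeroth-order term $\eta(K\cdot,J\cdot)+\eta(J\cdot,K\cdot)$) is consistent.
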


\begin{proof}We need to show that $\omega_{J_-}=0$ along the flow. 
This is clearly a local property on time. We suppose that the smooth solution exists for time $[0, T]$ and we assume that all geometric quantities are uniformly bounded in $[0, T]$, depending only on initial condition and $T$. We claim, \begin{equation}\label{E-c}
\frac{\p }{\p t} \omega_{J_{-}}=\left(g^{pq}\bar\nabla_p\bar \nabla_q \omega_{J_{-}}\right)_{J_{-}}-\omega_{J_{-}} (K\cdot, J\cdot)-\omega_{J_{-}}(J\cdot, K\cdot).
\end{equation}
Indeed this follows from a direct computation, similarly as in the proof of Proposition \ref{P3-k12}, by noting that $(\omega_J, J)$ is a compatible pair. To be more specific, applying Proposition \ref{P3-k12} to $(\omega_J, J)$, we have (see \eqref{E3-k22})
\[
(-d^*d\omega_J)_{J_{-}}=\frac{1}{2}\omega_J(K_2\cdot, J\cdot)+\frac{1}{2}\omega_J(J\cdot, K_2 \cdot)
\]
By compatibility of $(\omega_J, J)$, a direct computation gives
\[
\left(L_X\omega_J\right)_{J_{-}}=\frac{1}{2}\omega_J(L_XJ\cdot, J\cdot)+\frac{1}{2}\omega_J(J\cdot, L_XJ\cdot)
\]
Hence we have
\begin{equation}\label{E3-kj}
(-d^*d\omega_J+L_X\omega_{J})_{J_{-}}=\frac{1}{2}\omega_J(K\cdot, J\cdot)+\frac{1}{2}\omega_J(J\cdot, K\cdot). 
\end{equation}
We compute
\[
\frac{\p }{\p t} \omega_{J_{-}}= \left(\p_t\omega\right)_{J_{-}}-\frac{1}{2}\omega(K\cdot, J\cdot)-\frac{1}{2}\omega(J\cdot, K\cdot)
\]
Using the equation for $\p_t\omega$ and \eqref{E3-kj}, we have proved the claim. 
Given \eqref{E-c}, a direct maximum principle argument  then implies that if $\omega_{J_{-}}=0$ at $t=0$, it is preserved along the flow. Alternatively, one can also use \eqref{E-c} to compute directly that, in $[0, T]$, 
\[
\begin{split}
\frac{\p }{\p t}\int_M |\omega_{J_-}|^2 d\mu=&2\int_M \frac{\p \omega_{J_{-}}}{\p t}\omega_{J_-}+\int_M \omega_{J_-}*\omega_{J_-}\\
=&-2\int_M |\nabla \omega_{J_-}|^2+\int_M \nabla \omega_{J_-}*\omega_{J_-}+\int_M \omega_{J_-}*\omega_{J_-}\\
\leq& -\int_M |\nabla \omega_{J_-}|^2+C\int_M |\omega_{J_-}|^2,
\end{split}
\]
where $A*B$ denotes possible contractions of $A, B$ (we also use the fact that all metrics are smooth and hence have bounded geometry and are equivalent for $t\in [0, T]$). 
The statement then follows from the Gronwall's inequality.
\end{proof}

By Proposition \ref{P3-k12},  $K_1(\omega, J)=K_2(\omega, J)$ holds for a compatible pair, hence we get that

\begin{prop}Let $(\omega, J)$ be almost Hermitian structures which solves \eqref{Hflow4}. Then $(\omega, J)$ solves the system
\begin{equation}\label{E-Hflow5}
\begin{split}
\frac{\p \omega}{\p t}&=-d^*d\omega+L_X\omega\\
\frac{\p J}{\p t}&=K_1(\omega, J)+L_XJ
\end{split}
\end{equation}
where $X$ is the vector field defined in \eqref{E-x} and $K_1(\omega, J)$ is determined by
\begin{equation}\label{E-k2}
g(K_1x, y)=(-d^*d\omega)_{J_{-}}.
\end{equation}
\end{prop}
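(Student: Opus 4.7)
The proposition is essentially a reconciliation statement: once the compatibility of $(\omega, J)$ is known to be preserved (Proposition \ref{P-c}), the general tamed-pair system \eqref{Hflow4} collapses to the simpler compatible-pair system \eqref{E-Hflow5}. So the plan is to substitute the compatibility relations $\omega_J = \omega$ and $\omega_{J_-} = 0$ into each term of \eqref{Hflow4} and then invoke Proposition \ref{P3-k12} to identify the two natural operators $K_1$ and $K_2$.

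First, since $(\omega, J)$ is almost Hermitian, the $J$-invariant/anti-invariant decomposition gives $\omega_J = \omega$ and $\omega_{J_-} = 0$ identically along the flow. Substituting into the first line of \eqref{Hflow4}, the correction term $g^{pq}\bar\nabla_p\bar\nabla_q(\omega_{J_-})$ vanishes and $\omega_J$ inside $-d^*d(\omega_J) + L_X(\omega_J)$ may be replaced by $\omega$, yielding
\[
\frac{\p \omega}{\p t} = -d^*d\omega + L_X\omega,
\]
where $X$ is defined by \eqref{E-x} applied to the compatible pair $(\omega, J)$ (which is what \eqref{E-x} would give for $(\omega_J, J)$ anyway). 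This is the first line of \eqref{E-Hflow5}.

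Next, the $J$-equation in \eqref{Hflow4} reads $\p_t J = K_2(\omega_J, J) + L_X J = K_2(\omega, J) + L_X J$ once we replace $\omega_J$ by $\omega$. Here the main input is Proposition \ref{P3-k12}, which asserts that \emph{for a compatible pair} one has $K_1(\omega, J) = K_2(\omega, J)$, with $K_1$ characterized by $g(K_1 x, y) = (-d^*d\omega)_{J_-}(x, y)$ as in \eqref{E-k2}. Substituting this identification gives
\[
\frac{\p J}{\p t} = K_1(\omega, J) + L_X J,
\]
which is the second line of \eqref{E-Hflow5}.

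There is no real obstacle here beyond bookkeeping; the content of the statement is entirely packaged in the compatibility-preservation result (Proposition \ref{P-c}) and in the coincidence of $K_1$ and $K_2$ on compatible pairs (Proposition \ref{P3-k12}). The only point worth being careful about is that the vector field $X$ in \eqref{E-x} is defined through the formula $X^k = g^{pq}\bar\nabla_p(\omega_{qj})\omega^{jk}$ using the actual $\omega$ rather than its $J$-invariant part, but since $\omega = \omega_J$ along the flow these are the same, so the $X$ appearing in \eqref{Hflow4} and in \eqref{E-Hflow5} literally coincide.
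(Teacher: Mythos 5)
Your argument is correct and is exactly the paper's: the paper derives this proposition in one line by combining Proposition \ref{P-c} (compatibility is preserved) with Proposition \ref{P3-k12} ($K_1=K_2$ for a compatible pair), so that $\omega_J=\omega$, $\omega_{J_-}=0$, and $K_2(\omega_J,J)=K_1(\omega,J)$ collapse \eqref{Hflow4} to \eqref{E-Hflow5}. Your additional remark that the vector field $X$ from \eqref{E-x} computed from $\omega_J$ coincides with the one computed from $\omega$ is a worthwhile bookkeeping check that the paper leaves implicit.
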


Now we can apply the DeTurck's trick and we prove Theorem \ref{T-3}  by obtaining  a smooth short time solution of \eqref{Hflow3}, for 
a compatible initial condition $(\omega_0, J_0)$. Indeed, we can first solve \eqref{Hflow4} with initial condition $(\omega_0, J_0)$ with a smooth solution for a short time. Proposition \ref{P-c}  then implies $(\omega(t), J(t))$ is an almost Hermitian structure, and hence it solves \eqref{E-Hflow5},  where $X$ is determined in \eqref{E-x}. Now let $\phi_t: M\rightarrow M$ is a diffeomorphism generated by $-X$ with $\phi_0=id$. Denote \[(\tilde \omega, \tilde J)=(\phi_t^*\omega(t), \phi_t^*J(t)).\] We can then compute \[
\begin{split}
&\frac{\p \tilde \omega}{\p t}=\phi_t^*(\p\omega/\p t+L_{-X}\omega)=-\phi_t^{*}(d^*d\omega)= -d^*d\tilde \omega\\
&\frac{\p \tilde J}{\p t}=\phi_t^*(\p J/\p t+L_{-X}J)=\phi_t^*(K_1(\omega, J))=K_1(\tilde \omega, \tilde J).
\end{split}
\] 
Hence $(\tilde \omega, \tilde J)$ solves \eqref{Hflow3}. \\

We emphasize that the uniqueness still holds, but  the usual proof of the uniqueness does not extend to this system directly, due to the fact that the equation for the involved diffeomorphisms is not parabolic. To be more specific,  for a diffeomorphism $\phi: M\rightarrow M$, denote \[\left(\phi^{-1}\right)^* X=X\left((\phi^{-1})^* \omega, (\phi^{-1})^* J, \bar g\right)\]
We want to consider  the operator
\[\phi\rightarrow  (-X) (\left(\phi^{-1}\right)^*\omega, \left(\phi^{-1}\right)^* J, \bar g).\]
If it were an elliptic operator, the proof for the uniqueness should follow the standard route; however this is not the case in our situation. Without loss of generality, we assume $\phi=id$ and $\delta \phi=Y$, then we have
$\delta \phi^{-1}=-Y$. We compute
\[\begin{split}
\delta\left(\phi^{-1}\right)^* (-X^k)=&\delta\left(- (\phi^{-1})^* \left(g^{pq}\bar \nabla_p (\omega_{qj}) \omega^{jk}\right)\right)\\
=&\delta\left(- (\phi^{-1})^* \left(g^{pq}\p_p (\omega_{qj}) \omega^{jk}\right)\right)\\
=&g^{pq}\p_p\left(L_Y\omega_{qj}\right)\omega^{jk}+\text{l.o.t.}\\
=&g^{pq}\p^2_{p, q}(Y^k)+g^{pq}\p^2_{p, j}(Y^l)\omega_{lq}\omega^{jk}+\text{l.o.t}
\end{split}\]
where $\text{l.o.t}$ stands for terms of $Y^k$ up to first derivative. We can choose a coordinate such that $g_{ij}=\delta_{ij}$ (at one point) and it follows that $\omega^{jk}=-\omega_{jk}$ (note that ${\omega^{jk}}$ is the inverse of $\omega$), hence the symbol of the linearized operator reads, for $\xi=(\xi_1, \cdots, \xi_{2n} )$, 
\[
\sigma L(\xi): Y^k\rightarrow Y^k |\xi|^2-\sum_j\xi_j \omega_{jk} \sum_p\xi_p\omega_{pl} Y^l
\]
We assume $|\xi|^2=1$ and denote $\eta_k=\sum_j\xi_j \omega_{jk}$, then $\sum \eta_k\eta_k=1$. Hence we can write the operator as $I-\eta^T\eta$; this operator is always nonnegative definite (and mostly positive indeed), but it has exactly one zero eigenvalue when $\eta_k=1$ for some $k$ ($\eta_i$ has to be $0$ for $i\neq k$). 
We shall also mention that the equation on $\phi$, \[\p_t\phi=-(-X) (\left(\phi^{-1}\right)^*\omega, \left(\phi^{-1}\right)^* J, \bar g)\] likely has a (unique) short time smooth solution, even it is only a degenerate parabolic system. This would be enough for the proof of uniqueness but we shall not pursue in this direction. Another way to get around might be that we try to choose $X$ slightly differently such that it still makes the system \eqref{Hflow4} parabolic and $(-X) (\left(\phi^{-1}\right)^*\omega, \left(\phi^{-1}\right)^* J, \bar g)$ is also parabolic on $\phi$, since we have some freedom on the choice of $X$. However we are not able to find such a choice of $X$.  Instead we will use Halmiton's Theorem \ref{thm-h}, as in Section 2, to give an alternative proof of existence and uniqueness as well.

\subsection{Uniqueness}
We consider the following system for a tamed pair $(\omega, J)$,
\begin{equation}\label{Hflow4-1}
\begin{split}
&\frac{\p \omega}{\p t}=-d^*d\omega\\
&\frac{\p J}{\p t}=K_2(\omega_J, J),
\end{split}
\end{equation}
where $K_2, d^*$ are both determined by the almost Hermitian structure $(\omega_J, J)$.

\begin{thm}\label{T-uniqueness}For a tamed pair $(\omega_0, J_0)$, there exists a unique smooth solution for a short time. When $(\omega_0, J_0)$ is compatible, then the compatibility condition is preserved along the flow and the solution coincides with \eqref{Hflow3}. In particular, a smooth solution of the system \eqref{Hflow3} with fixed initial condition is unique. 
\end{thm}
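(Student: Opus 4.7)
The plan is to apply Hamilton's Theorem~\ref{thm-h} directly to the system \eqref{Hflow4-1}, in parallel with the proof of Theorem~\ref{thm-tame} in Section~2. This yields a unique smooth short-time solution for any tamed initial data; compatibility preservation together with Proposition~\ref{P3-k12} will then transfer the conclusion to \eqref{Hflow3}. As a preliminary, the nonlinear space of almost complex structures should be linearized near $J_0$ via the map $\pi$ used in the proof of Theorem~\ref{T-3}, so that Hamilton's framework applies.

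For the integrability condition I would take $L(\omega, J)(\psi, P) = d^*\psi$, where $d^*$ is the codifferential of the almost Hermitian metric associated to the always-compatible pair $(\omega_J, J)$ and $L$ ignores the $J$-component $P$. Since $(d^*)^2 = 0$ as an identity for every Riemannian metric, one has $L\,E = -d^*d^*d\omega = 0$, verifying Condition~(A). For Condition~(B), note that $-d^*d\omega$ involves no second derivatives of $J$, while $K_2(\omega_J, J)$ involves second derivatives of both $\omega$ (through the $\psi$-term in \eqref{E-s}) and $J$ (through its elliptic principal part in \eqref{E-k}). Hence the symbol matrix on $(\psi, P)$ takes the lower-triangular form
\[
\sigma DE(\omega, J)(\xi) = \begin{pmatrix} \sigma(-d^*d)(\xi) & 0 \\ A(\xi) & \sigma K_2(\xi) \end{pmatrix}.
\]
The null space of $\sigma L(\xi)$ consists of pairs with $\iota_\xi\psi = 0$ and $P$ free; on this subspace a direct computation as in Section~2 gives $\sigma(-d^*d)(\xi)\psi = |\xi|^2 \psi$, and \eqref{E3-k2} yields $\sigma K_2(\xi) P = |\xi|^2 P$. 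By lower triangularity, the eigenvalues of the restricted symbol are both $|\xi|^2>0$, so Condition~(B) holds and Hamilton's theorem delivers a unique smooth short-time solution of \eqref{Hflow4-1}.

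When the initial data is compatible, I would repeat the calculation of Proposition~\ref{P-c}. Applying Proposition~\ref{P3-k12} to the (always compatible) pair $(\omega_J, J)$, the evolution of the anti-$J$-invariant part of $\omega$ along \eqref{Hflow4-1} becomes
\[
\frac{\p}{\p t}\omega_{J_-} = (-d^*d\omega_{J_-})_{J_-} - \tfrac{1}{2}\bigl(\omega_{J_-}(K\cdot, J\cdot) + \omega_{J_-}(J\cdot, K\cdot)\bigr).
\]
Testing with $\omega_{J_-}$ and integrating, the weakly parabolic principal term contributes $-\int_M |d\omega_{J_-}|^2 \leq 0$ while the remainder is bounded by $C\int_M |\omega_{J_-}|^2$, so Gr\"onwall's inequality forces $\omega_{J_-}\equiv 0$. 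Since compatibility then gives $\omega_J=\omega$ and $K_2(\omega, J)=K_1(\omega,J)$ by Proposition~\ref{P3-k12}, \eqref{Hflow4-1} restricted to compatible pairs coincides with \eqref{Hflow3}; any smooth solution of \eqref{Hflow3} with compatible initial data is therefore also a solution of \eqref{Hflow4-1}, and uniqueness from Hamilton's theorem transfers verbatim, completing the proof.

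The main technical hurdle is confirming the precise structure of the symbol matrix---specifically, that the $\psi$-contribution inside $K_2$ genuinely produces the off-diagonal entry $A(\xi)$ and does not modify the diagonal blocks---so that lower triangularity truly holds. Once this bookkeeping is complete, positivity on the null space of $\sigma L(\xi)$ is immediate from the symbol computations already carried out in Sections~2 and~3, and the compatibility-preservation step runs exactly as in Proposition~\ref{P-c}.
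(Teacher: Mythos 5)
Your proposal is correct, and its core is the same as the paper's: both apply Hamilton's Theorem~\ref{thm-h} to the tamed system \eqref{Hflow4-1} with integrability condition $L(\omega,J)(\theta,K)=(d^*\theta,0)$, observe that $L E=-d^*d^*d\omega=0$, and use the block lower-triangular symbol (no second derivatives of $J$ in $-d^*d\omega$, elliptic principal part $|\xi|^2$ of $K_2$ on $J$) to check positivity on $\Ker\sigma L(\xi)=\Ker(\iota_{\xi^\sharp})\oplus\cK$. Where you genuinely diverge is the compatibility-preservation step. The paper argues indirectly: it invokes the compatible short-time solution of \eqref{Hflow3} already constructed in Section~3.2 via the DeTurck trick, notes that by Proposition~\ref{P3-k12} this solution also solves \eqref{Hflow4-1}, and then concludes from the uniqueness of \eqref{Hflow4-1} that the Hamilton solution must coincide with it, hence remains compatible. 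You instead prove preservation directly, by deriving the evolution equation $\p_t\omega_{J_-}=(-d^*d\omega_{J_-})_{J_-}-\tfrac12\bigl(\omega_{J_-}(K\cdot,J\cdot)+\omega_{J_-}(J\cdot,K\cdot)\bigr)$ along \eqref{Hflow4-1} (using \eqref{E3-k22} applied to the always-compatible pair $(\omega_J,J)$) and running the Gr\"onwall argument; note that this requires the orthogonality of the $J_\pm$ splitting of two-forms so that $\int_M\la(-d^*d\omega_{J_-})_{J_-},\omega_{J_-}\ra=-\int_M|d\omega_{J_-}|^2\le0$, and that the time-dependence of $g$ and $dv$ only contributes terms controlled by $C\int_M|\omega_{J_-}|^2$ on a compact smooth time interval. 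Your route buys self-containedness: Section~3.3 no longer depends on the DeTurck existence argument of Section~3.2, and in fact yields an independent second proof of the existence part of Theorem~\ref{T-3}. The paper's route buys economy: it reuses Proposition~\ref{P-c} wholesale and avoids redoing the $\omega_{J_-}$ computation for the unmodified flow (your version is, if anything, slightly cleaner there, since $d^*$ is exactly adjoint to $d$ for the evolving metric, whereas Proposition~\ref{P-c} must absorb cross terms coming from the background connection). The final deduction --- any solution of \eqref{Hflow3} with compatible data solves \eqref{Hflow4-1}, so uniqueness transfers --- is identical in both arguments.
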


\begin{proof}We shall keep the discussion brief since this is similar to the situation in Theorem \ref{thm-tame}.  For a tamed pair $(\omega, J)$, the linearized operator of the system is not an elliptic, but a degenerate elliptic operator. However we can understand the degeneracy clearly, as in Theorem \ref{thm-tame}. First the leading term of the linearized operator of $-d^*d$ is still $-d^*d$ and an important feature is that it contributes zero to the second derivatives on $J$;  this operator has a null eigenspace, with its image contained in $\Ker (d^*)$, and $-d^*d$ is elliptic on $\text{Ker}(d^*)$. And $K_2(\omega_J, J)$ is  elliptic on $J$  (it does contributes second derivatives of $\omega$). Nevertheless, as in Theorem \ref{thm-tame}, we still have $L(\omega, J) (\theta, K)=(d^*\theta, 0)$ as the integrability condition of the system $E(\omega, J)=(-d^*d\omega, K_2)$ (clearly $L(\omega, J) E(\omega, J)=0$ and $L(\omega, J)$ is a (at most) degree one operator). The linearized operator  $DE$ is elliptic on the kernel of $\Ker(L)=\Ker(d^*)\otimes \cK$, where $\cK$ is the space of  all endomorphisms $K$ satisfying $JK+KJ=0$.  We can then apply Hamilton's Theorem \ref{thm-h} to conclude the existence and uniqueness of a smooth solution for a short time for this system.  

When $(\omega_0, J_0)$ is compatible,  there exists a short time smooth solution of \eqref{Hflow3}, by the results we proved in Section 3.2. Note that the compatibility condition is preserved and hence we have $K_2=K_1$. It then implies that  this solution also solves \eqref{Hflow4-1}. By uniqueness of \eqref{Hflow4-1} we then know that the flow preserves compatibility of $(\omega, J)$.  We also note that  any smooth solution of \eqref{Hflow3} of a compatible pair also solves \eqref{Hflow4-1}. By the uniqueness of \eqref{Hflow4-1} again, this implies the uniqueness of the system \eqref{Hflow3} as well. This completes the proof. 
\end{proof}

\subsection{Volume functional, special solutions and singularities}\label{CV}
Clearly a fixed point is $-d^*d\omega=0$, or equivalently, $d\omega=0$. Hence a fixed point is a symplectic structure. Another special  solution is $-d^*d\omega=\l \omega$ for some constant $\l$. Clearly $\l=0$ or $\l$ has to be negative. For negative $\l$, one can scale $\omega$ (correspondingly, the metric) to get any positive constant. For simplicity, we consider 
\begin{equation}\label{E3.4-1}d^*d\omega =\omega.\end{equation}
This is equivalent to  $d^*\omega=0$ and $\Delta \omega=\omega$. From this point of view, these special solutions of $d^*d$ flow are more rigid than Laplacian flow, since the special solutions of the latter are either symplectic or just satisfies $\Delta \omega=\omega$. We shall discuss in more details about this equation \eqref{E3.4-1}.

We shall note that there are many examples of almost Hermitian structure $(\omega, J)$ satisfy $d^*d\omega=\omega$. For example, up to a scaling, the nearly-K\"ahler structure on $S^6$ satisfies this  (indeed any strictly nearly Kahler example satisfies this, up to scaling).  Note that such an example gives a finite time singularity, as the case that positive Einstein metrics for the Ricci flow.
Indeed if $(\omega_0, J_0)$ is an almost Hermitian structure such that $d^*_0d\omega_0=\omega_0$. For any constant $\l>0$, $(\l\omega_0, J_0)$ defines another almost Hermitian structure such that $d^*_\l d=\l^{-1} d^*_0d$. Hence $((1-t)\omega_0, J_0)$ is the unique solution for the system and it collapses at $t=1$. 

It would be highly preferable to understand general singularities of this system. The discussions and results, as in other geometric evolution equations, in particular the Ricci flow, will certainly be enlightening. We shall consider this elsewhere. 

We also note that the volume functional decreases along such a flow. For the $d^*d$ flow (or Laplacian flow), one can compute directly that
\[
\frac{d}{dt} \int_M \frac{\omega^n}{n!}=\int_M -d^*d\omega \wedge \frac{\omega^{n-1}}{(n-1)!}=\int_M -d^*d\omega \wedge *\omega =-\int_M (d\omega, d\omega) \frac{\omega^n}{ n!}.
\]
Hence the volume functional is decreasing along the flow.
\begin{prop}The volume functional is decreasing along the $d^*d$ flow. \end{prop}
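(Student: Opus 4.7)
The plan is to differentiate the volume $V(t)=\int_M \omega(t)^n/n!$ directly under the integral sign and substitute the flow equation, using the Hodge--star identity recorded in Lemma \ref{Lchar} to convert the pointwise pairing into an $L^2$ inner product.

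First I would write
\[
\frac{d}{dt}\frac{\omega^n}{n!}=\frac{\p\omega}{\p t}\wedge\frac{\omega^{n-1}}{(n-1)!},
\]
which is just Leibniz applied to $\omega\wedge\cdots\wedge\omega$ together with the fact that all wedge factors are equal so the $n$ identical contributions cancel the $n!$. Substituting $\p_t\omega=-d^*d\omega$ from \eqref{Hflow} (or the $\omega$-equation in \eqref{Hflow3}, since we are in the compatible setting where the flow preserves compatibility by Proposition \ref{P-c}) gives
\[
\frac{d}{dt}\int_M \frac{\omega^n}{n!}=-\int_M d^*d\omega\wedge \frac{\omega^{n-1}}{(n-1)!}.
\]

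Next I would invoke the identity \eqref{E-dual} from Lemma \ref{Lchar}, namely $*\omega=\omega^{n-1}/(n-1)!$, which holds precisely because $(\omega,J)$ is compatible. This converts the right-hand side into the $L^2$ pairing
\[
-\int_M d^*d\omega\wedge *\omega=-(d^*d\omega,\omega)=-(d\omega,d\omega)\le 0,
\]
where the last equality is just the definition of the formal adjoint $d^*$. This yields the decrease of volume, and in fact identifies the dissipation rate as $\|d\omega\|_{L^2}^2$, which fits with the interpretation of the flow as a (degenerate) gradient-like descent for $H_0(\omega)=(d\omega,d\omega)$.

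The only point that requires a moment of care — and which I would flag as the ``obstacle,'' even though it is mild — is that the key identity $*\omega=\omega^{n-1}/(n-1)!$ is an \emph{almost Hermitian} statement; for a merely tamed pair $(\omega,J)$ the Hodge star is defined from the metric associated to $(\omega_J,J)$ and the identity applies to $\omega_J$ rather than $\omega$, so a direct application of the computation above needs the compatibility hypothesis. In the compatible setting, which is preserved by \eqref{Hflow3} thanks to Proposition \ref{P-c}, no such subtlety arises and the three-line calculation above is the whole proof.
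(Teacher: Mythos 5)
Your proof is correct and is essentially identical to the paper's own computation: differentiate the volume, substitute $\p_t\omega=-d^*d\omega$, use $*\omega=\omega^{n-1}/(n-1)!$ to recognize the $L^2$ pairing, and conclude $\frac{d}{dt}\int_M\omega^n/n!=-(d\omega,d\omega)\le 0$. The remark about the identity $*\omega=\omega^{n-1}/(n-1)!$ requiring compatibility is a sensible clarification but does not change the argument.
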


Hence suppose the flow exists for all time and converges smoothly, then the limit is a symplectic structure. Since a symplectic structure might not exist  on a compact almost complex manifold, the singularities are inevitable in general. And it would be an interesting problem to study the formation of singularities. 

\subsection{An example: warped product}
First we consider a simple example of $d^*d$-flow on $\R^4$, with coordinate $(x, y, z, w)$. Let $J_0$ be the standard complex structure satisfying \[J_0\p_x=\p_y, J_0\p_y=-\p_x, J_0\p_z=\p_w, J_0\p_w=-\p_z.\] 
We consider  a nondengenerate two form, with $a=a(z, w), b=b(x, y)$ both positive, \[\omega_0=a(z, w)dx\wedge dy+b(x, y)dz\wedge dw.\] Clearly $(\omega_0, J_0)$ defines a Hermitian structure on $\R^4$ and $d\omega_0\neq 0$ unless both $a, b$ are constant. First we assume $a=1$ and $b$ is periodic in $x$ and $y$, and hence $\omega_0$ descends to a nondegenerate two form on $T^4=\R^4/\Z^4$, a compact tori. We have the following,
\begin{prop}For a Hermitian structure $(\omega_0, J_0)$ on $T^4$ with $\omega_0=dx\wedge dy+b_0(x, y) dz\wedge dw$, there exists a unique long time solution to $d^*d\omega$ flow, which is of the form 
$\omega_t=dx\wedge dy+b(t, x, y) dz\wedge dw, J_t=J_0$ such that 
\begin{equation}\label{E-b}
\p_t b=\Delta_{x, y} b-b^{-1} |\nabla_{x, y} b|^2.
\end{equation}
In particular, when $t\rightarrow \infty$, $\lim b$ is a positive constant, hence $\omega_t$ converges to a symplectic form on $T^4$. 
\end{prop}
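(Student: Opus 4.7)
The plan is to verify directly that the proposed ansatz $\omega_t = dx\wedge dy + b(t,x,y)\,dz\wedge dw$ with $J_t \equiv J_0$ is consistent with the $d^*d$-flow system \eqref{Hflow3}, and then reduce the flow to a linear heat equation via a substitution. First I would compute the almost Hermitian metric associated with the compatible pair $(\omega_0,J_0)$, which is the warped product $g = dx^2 + dy^2 + b(dz^2+dw^2)$, and use an orthonormal coframe $e^1=dx$, $e^2=dy$, $e^3=\sqrt{b}\,dz$, $e^4=\sqrt{b}\,dw$ together with $d^* = -{*}d{*}$ to compute $d^*d\omega$ for an $\omega$ of the above form. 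Since $b$ depends only on $x,y$, one finds $d\omega = b_x\,dx\wedge dz\wedge dw + b_y\,dy\wedge dz\wedge dw$ and a short Hodge-star computation yields
\[
-d^*d\omega \;=\; b\bigl[(b_x/b)_x + (b_y/b)_y\bigr] dz\wedge dw \;=\; \bigl(\Delta_{x,y} b - b^{-1}|\nabla_{x,y} b|^2\bigr)\,dz\wedge dw.
\]
In particular $-d^*d\omega$ is a $J_0$-invariant two-form, so its anti-$J$-invariant part vanishes, which by \eqref{E3-k1} gives $K_1(\omega,J_0)=0$. This shows $\partial_t J = 0$ is consistent and the full flow collapses to the single scalar equation \eqref{E-b} for $b$.

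Next I would treat \eqref{E-b} as a quasilinear parabolic equation on $T^2$ and make the substitution $u = \log b$. A direct computation shows that
\[
u_t = \frac{b_t}{b} = \frac{\Delta_{x,y} b}{b} - \frac{|\nabla_{x,y} b|^2}{b^2} = \Delta_{x,y} u,
\]
i.e. $u$ satisfies the linear heat equation on the flat torus $T^2$. Standard parabolic theory immediately gives a unique smooth long-time solution $u(t,x,y)$ starting from $u_0 = \log b_0$ (which is smooth since $b_0>0$). Exponentiating, $b(t,x,y) = \exp u(t,x,y)$ stays strictly positive for all $t\ge 0$; hence $\omega_t$ stays nondegenerate and the pair $(\omega_t, J_0)$ remains an almost Hermitian structure for all time.

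Convergence is then read off from the heat equation: $u(t,\cdot)\to \bar u := \mathrm{vol}(T^2)^{-1}\int_{T^2} u_0$ smoothly as $t\to\infty$, so $b(t,\cdot)\to e^{\bar u}$, a positive constant. Since $dx\wedge dy$ and $c\,dz\wedge dw$ for any constant $c>0$ are closed two-forms, the limiting form $\omega_\infty = dx\wedge dy + e^{\bar u}\,dz\wedge dw$ is symplectic. Finally, uniqueness among all $d^*d$-flow solutions (not just solutions of the given form) follows from Theorem \ref{T-uniqueness}, since our ansatz solves \eqref{Hflow3} with the prescribed initial data and the theorem asserts uniqueness in that class.

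The only nontrivial input is the Hodge-theoretic computation of $d^*d\omega$ for the warped product metric; once this is done, the key step -- the logarithmic substitution that linearises \eqref{E-b} to the ordinary heat equation -- makes the long-time existence and convergence essentially free. No delicate a priori estimates are needed, which is what makes this example a clean illustration of the flow.
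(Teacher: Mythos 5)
Your proposal is correct and follows essentially the same route as the paper: the orthonormal coframe $dx, dy, \sqrt{b}\,dz, \sqrt{b}\,dw$ and Hodge-star computation yielding $-d^*d\omega = (\Delta_{x,y}b - b^{-1}|\nabla_{x,y}b|^2)\,dz\wedge dw$, the observation that this form is $J_0$-invariant so $J$ stays fixed, the substitution $u=\log b$ reducing \eqref{E-b} to the linear heat equation on $T^2$, and the appeal to the flow's uniqueness theorem to conclude. No substantive differences from the paper's argument.
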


\begin{proof}By the uniqueness and existence, \eqref{E-b} follows from a routine computation. To be more precise, for any nondegenerate form $\omega=dx\wedge dy+b(x, y) \wedge dw$ (we assume $b>0$ as always), it defines a Hermitian structure with $J_0$. Moreover, 
\[
dx, dy, \sqrt{b} dz, \sqrt{b} dw
\]
gives an orthonormal coframe. Hence a direct  computation gives, 
\[
\begin{split}
d\omega=&\p_x b  dx\wedge dz\wedge dw+\p_y b dy\wedge dz\wedge dw \\
*d\omega=&-\frac{\p_x b}{b} dy+\frac{\p_y b}{b} dx\\
d*d\omega=&-\p_x\left(\frac{\p_x b} {b}\right)dx\wedge dy-\p_y \left(\frac{\p_y b}{ b}\right) dx\wedge dy\\
-*d*d\omega=& b\left(\p_x\left(\frac{\p_x b} {b}\right)+\p_y \left(\frac{\p_y b}{ b}\right)\right) dz\wedge dw
\end{split}
\]
It follows that 
\begin{equation}\label{E-b01}-d^*d\omega = \left(\p_x^2+\p_y^2\right) b-b^{-1} \left(b_x^2+b_y^2\right)=\Delta b-b^{-1}|\nabla b|^2,\end{equation}
where $\Delta, \nabla$ take the obvious meaning, using the flat metric on $T^2$ and coordinates $x, y$. Now we consider the equation on $T^2$
\begin{equation}\label{E-b1}
\p_t b=\Delta b-b^{-1}|\nabla b|^2,
\end{equation}
with the initial condition $b(0)=b_0$. The equation \eqref{E-b1} mimics the Harmonic flow equation in some way but the nonlinear term $-b^{-1}|\nabla b|^2$ has the sign which is towards our favorite. Indeed if we consider the equation for $\log b$, we get the standard heat equation,
\[
\p_t \log b=\Delta (\log b).
\]
From this we see that the equation has a long time unique solution and $\log b$ converges to a constant on $T^2$ when time goes to infinity. 
Now define $\omega_t=dx\wedge dy+b(t, x, y)dz\wedge dw$ on $T^4=T^2\times T^2$. We see that $(\omega_t, J_0)$ solves the $d^*d$-flow for a compatible pair; indeed since $d^*d\omega_t$ is always $J_0$ compatible, $J_0$ stays fixed and the evolution equation for $\omega_t$ is then evident, by \eqref{E-b01}. By the uniqueness, $d^*d$-flow is reduced to \eqref{E-b} in this case. This completes the proof. 
\end{proof}

When $a$ is not a constant, one can compute that, for $\omega_0=a(z, w) dx\wedge dy+b(x, y) dz\wedge dw$,
\[
-d^*d\omega=\frac{b}{a}\left(\p_x\left(\frac{\p_x b} {b}\right)+\p_y \left(\frac{\p_y b}{ b}\right)\right)+\frac{a}{b} \left(\p_z\left(\frac{\p_z a} {a}\right)+\p_w \left(\frac{\p_w a}{ a}\right)\right).
\]
The system becomes complicated since we cannot separate variables anymore and $J$ will also be evolved along the flow.

We can generalize this example to more general cases such as  ``warped product" and fiber bundles. For simplicity we only consider warped product here. Let $(M_1, \omega_1, J_1)$ and $(M_2, \omega_2, J_2)$ be two almost K\"ahler manifolds. Consider an almost Hermitian structure on \[M=M_1\times M_2, \omega_f(x, y)=\omega_1(x)+f(x)\omega_2(y), J,\] where $f: M_1\rightarrow \R_{+}$ is a positive smooth function on $M_1$ and $J=J_1\oplus J_2$. 

\begin{thm}For any almost Hermitian structure $(M, \omega_f, J)$, the $d^*d$-flow exists for all time and it converges to a almost K\"ahler structure with symplectic form $\omega_\infty=\omega_1+a\omega_2$, for some positive constant $a$.
\end{thm}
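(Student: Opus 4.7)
The plan is to exhibit an explicit warped-product solution and invoke the uniqueness in Theorem \ref{T-uniqueness} to conclude that it is \emph{the} $d^*d$-flow. The key observation is that the flow collapses to a single scalar parabolic equation for $f$ on $M_1$, which becomes the linear heat equation after a change of variable.

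First I would carry out the Hodge-theoretic step. Since $\omega_1$ and $\omega_2$ are both closed, $d\omega_f = df \wedge \omega_2$. For the warped-product metric $g_f = g_1 + f g_2$, the defining property of $*$ yields the warped-product formula
\begin{equation*}
*_{g_f}(\alpha \wedge \beta) = (-1)^{q(2n_1-p)} f^{n_2-q}(*_{g_1}\alpha) \wedge (*_{g_2}\beta)
\end{equation*}
for any $\alpha \in \L^p(M_1)$, $\beta \in \L^q(M_2)$, where $2n_i = \dim M_i$. Using $*_{g_2}\omega_2 = \omega_2^{n_2-1}/(n_2-1)!$ (which is $d$-closed) and tracking bidegrees, a short computation produces
\begin{equation*}
-d^*d\omega_f = \bigl[\Delta_{g_1} f + (n_2-2)f^{-1}|\nabla f|^2_{g_1}\bigr]\,\omega_2,
\end{equation*}
which specializes to \eqref{E-b01} when $n_1 = n_2 = 1$.

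Because $\omega_2$ is $J$-invariant the right-hand side is too, so $(-d^*d\omega_f)_{J_{-}} = 0$ and hence $K_1(\omega_f, J) = 0$ by \eqref{E-k2}. Thus along the ansatz $\omega_t = \omega_1 + f(t,\cdot)\omega_2$, $J_t \equiv J$, the equation for $J$ is trivially satisfied and the equation for $\omega$ reduces to the scalar equation
\begin{equation*}
\p_t f = \Delta_{g_1} f + (n_2-2)f^{-1}|\nabla f|^2_{g_1},\qquad f(0,\cdot) = f_0 > 0
\end{equation*}
on the compact base $M_1$. I would then introduce $v = f^{n_2-1}$ if $n_2 \ge 2$, or $v = \log f$ if $n_2 = 1$; a brief direct check confirms that in either case $v$ satisfies the linear heat equation $\p_t v = \Delta_{g_1} v$. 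Standard heat-equation theory on a compact Riemannian manifold then provides a smooth global solution; the maximum principle keeps $v$ (and hence $f$) between positive bounds determined by $f_0$, and $v$ converges smoothly and exponentially to its initial spatial average. Inverting the substitution yields $f \to a$ for some positive constant $a$.

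Finally, pulling back through the ansatz and appealing to Theorem \ref{T-uniqueness} identifies this one-parameter family as the $d^*d$-flow with initial data $(\omega_f, J)$, and $\omega_t \to \omega_1 + a\omega_2$, which is symplectic as a sum of closed forms. The only genuinely delicate step is the warped-product Hodge computation: signs and powers of $f$ must be tracked with some care, but this is bookkeeping rather than a conceptual obstacle, and it is cross-checked by matching \eqref{E-b01} in the low-dimensional special case. Everything else is a routine application of the linear heat equation on $M_1$.
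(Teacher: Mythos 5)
Your proposal is correct and follows essentially the same route as the paper: reduce the flow to the scalar equation $\p_t f=\Delta_1 f+(n-2)f^{-1}|\nabla_1 f|^2$ on $M_1$ via the warped-product ansatz with $J$ fixed, linearize it by the substitution $f^{n-1}$ (or $\log f$ when $n=1$), apply standard heat-equation theory, and invoke the uniqueness theorem to identify the ansatz with the $d^*d$-flow. The only difference is that you supply the warped-product Hodge-star computation and the observation that $K_1=0$ explicitly, which the paper leaves as ``a straightforward computation.''
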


\begin{proof}The proof is pretty similar to the example we discussed above. It is a straightforward computation to show that, 
\[
-d^*d (\omega_f)=\left(\Delta_1 f+(n-2)f^{-1}|\nabla f|^2\right) \omega_2,
\]
where $\Delta_1, \nabla_1$ are the Laplacian operator and covariant derivative on $M_1$ respectively, and $n$ is the half dimension of $M_2$. Hence, if $f(t)$ is a time-dependent function on $M_1$ satisfying the equation,
\[
\p_t f=\Delta_1 f+(n-2)f^{-1}|\nabla_1 f|^2, 
\]
then  $(\omega_f, J)$ solves the $d^*d$-flow (by the uniqueness). When $n=1$, we have
\[
\p_t \log (f)=\Delta_1 \log (f).
\]
When $n\geq 2$, then we have
\[
\p_t \left(f^{n-1}\right)=\Delta_1 \left(f^{n-1}\right).
\]
It follows that the flow exists for all time and $f$ converges to a positive constant, by the standard theory of the heat equation on a   compact Riemannian manifold. Hence the limit is a symplectic structure of the form $\omega_\infty=\omega_1+a \omega_2$ for some positive constant $a$. 
\end{proof}

\subsection{Extension and evolution equations}
We prove an extension theorem for the flow in this section, 

\begin{thm}\label{T-5}Suppose $(\omega, J)$ is a smooth solution of the $d^*d$-flow in $[0, T]$ and suppose the Riemannian curvature $Rm$, $|\nabla \omega|$ and $|\nabla^2\omega|$ are uniformly bounded, then all high derivatives of $\omega, J$ are also uniformly bounded and hence the flow can be extended across $T$ (in a uniform way). In other words, if $[0, T_0)$ is the maximal interval with the smooth solution for $d^*d$-flow and $T_0<\infty$, then
\[
\limsup \left(|Rm|+|\nabla \omega|+|\nabla^2\omega|\right)\rightarrow \infty, \; \mbox{when}\; t\rightarrow T_0.
\]
\end{thm}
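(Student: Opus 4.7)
The plan is the usual one for parabolic geometric flows: promote the assumed bounds on $|Rm|,|\nabla\omega|,|\nabla^2\omega|$ into uniform $C^k$ control on $(\omega,J)$ for every $k$, take a smooth limit at $T_0$, and relaunch by Theorem~\ref{T-3}. Throughout I work on $[0,T_0)$ and use that compatibility of $(\omega,J)$ is preserved (Proposition~\ref{P-c}), so $|\omega|_g$ and $|J|_g$ are constants depending only on the dimension. Differentiating $\omega(X,Y)=g(JX,Y)$ and using $\nabla g=0$ for the Levi--Civita connection of $g(t)$ gives $(\nabla_Z\omega)(X,Y)=g((\nabla_Z J)X,Y)$, so the hypothesis transfers directly to bounds on $|\nabla J|$ and $|\nabla^2 J|$. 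The Weitzenb\"ock identity combined with the $|Rm|$ bound then controls $|d^*d\omega|$, and hence $|K_1|$, pointwise; writing $\p_t g$ in terms of $\p_t\omega=-d^*d\omega$ and $\p_t J=K_1$ and applying Gronwall yields uniform equivalence $C^{-1}g(0)\le g(t)\le C g(0)$ on $[0,T_0)$.

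For higher derivatives I pass to the DeTurck-modified system~\eqref{Hflow4} used in the proof of Theorem~\ref{T-3}, which is strictly parabolic. Since compatibility persists, the gauge transformation produces $(\tilde\omega,\tilde J)=(\phi_t^*\omega,\phi_t^*J)$ solving~\eqref{Hflow4}, and the hypothesized quantities are diffeomorphism invariants, so they remain bounded in the gauged frame. For the strictly parabolic system~\eqref{Hflow4} I then derive Shi-type inductive estimates: for each $k\ge 3$ the test function
\[
F_k=t^{k-2}\left(|\nabla^k\tilde\omega|^2+|\nabla^k\tilde J|^2\right)+A_k\left(|\nabla^{k-1}\tilde\omega|^2+|\nabla^{k-1}\tilde J|^2\right)
\]
evolves by a heat-type inequality whose leading negative term $-2t^{k-2}\left(|\nabla^{k+1}\tilde\omega|^2+|\nabla^{k+1}\tilde J|^2\right)$ absorbs the lower-order cross terms via Cauchy--Schwarz and the inductive hypothesis, after choosing $A_k$ large. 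The maximum principle then gives $|\nabla^k\tilde\omega|+|\nabla^k\tilde J|\le C_k$ on $[0,T_0)$, and pulling back by $\phi_t^{-1}$ restores the same bounds for $(\omega,J)$. Uniform $C^\infty$ control yields a smooth compatible limit $(\omega(T_0),J(T_0))$ by Arzel\`a--Ascoli, and Theorem~\ref{T-3} applied at this limit extends the solution past $T_0$, contradicting maximality.

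The main obstacle is the Bernstein step: because the bare $d^*d$-flow is only weakly parabolic, the derivative estimates must be performed in the DeTurck gauge, which requires quantitative control of the auxiliary vector field $X$ of~\eqref{E-x} and of the diffeomorphisms $\phi_t$ it generates, so that the estimates and the bounds on $(\omega,J)$ transfer cleanly between the two gauges. A secondary technical point is the base case $k=3$ of the induction, where one must carefully track the $\nabla Rm$ terms that arise when commuting the rough Laplacian past covariant derivatives in the almost Hermitian setting; this is classical in spirit but must be checked for the specific second-order operator appearing in~\eqref{Hflow4}.
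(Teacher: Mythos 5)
Your overall strategy coincides with the paper's up to the key analytic step, and then diverges there. Like the paper, you reduce to the gauge-fixed strictly parabolic system (the paper uses \eqref{Hflow3.5-1}, the compatible-pair version of \eqref{Hflow4}) and exploit that the hypothesized bounds are diffeomorphism-invariant, so they survive the DeTurck transformation. The difference is in how the higher-order bounds are produced. The paper does \emph{not} run Shi-type Bernstein estimates: it observes that under the hypotheses the metrics are uniformly equivalent in $C^{1,\alpha}$, so the coefficients of the $\tilde\omega$-equation are uniformly $C^\alpha$; interior parabolic Schauder (maximal regularity) then gives $\tilde\omega\in C^{2,\alpha}$, which in turn makes the lower-order term $O(\partial^2\tilde\omega,\partial\tilde\omega,\partial\tilde J)$ in the $\tilde J$-equation $C^\alpha$ (crucially, that term contains no second derivatives of $\tilde J$), so Schauder applies again to give $\tilde J\in C^{2,\alpha}$, and the triangular structure of the system lets one bootstrap to all orders. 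Your maximum-principle route with the weighted quantities $F_k$ would, if carried out, prove a stronger interior-smoothing (Shi-type) estimate; but it requires the full commutator bookkeeping for the coupled system, including the fact that $K$ contains second derivatives of $\omega$, and the paper explicitly defers exactly this Lotay--Wei-style argument to a separate paper in the remark following the proof. In other words, you are taking on a substantially heavier burden than the theorem requires, and the two places you flag as "obstacles" (control of $X$ and $\phi_t$, and the base case of the induction) are precisely where your route has unverified content, whereas the paper's Schauder bootstrap sidesteps them by never differentiating the equation pointwise. One shared soft spot worth noting: both arguments need the uniform $C^{1,\alpha}$ equivalence of the evolving metrics from the stated hypotheses (your Gronwall step gives $C^0$ equivalence; the $C^{1,\alpha}$ statement uses the curvature bound and is not spelled out in either treatment). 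Also, a small sign slip: the relation between the two gauges is $(\omega,J)=(\phi_t^*\tilde\omega,\phi_t^*\tilde J)$ with $\phi_t$ generated by $-X$, not $(\tilde\omega,\tilde J)=(\phi_t^*\omega,\phi_t^*J)$; this does not affect the transfer of pointwise tensor norms.
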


A standard way to prove this type of theorem is to consider evolution equation of various geometric quantities such as curvature. We write $\theta_{ij}=-d^*d\omega_{ij}$. 
We need an identity for $d\alpha$, where $\alpha$ is a $p$-form
\[
d\alpha(X_0, X_1, \cdots, X_p)=\sum_{i=0}^p (-1)^i\nabla_{X_i} \alpha(X_0, \cdots, \hat X_i, \cdots, X_p).
\]
Hence we can write
\begin{equation}
(d\omega)_{ijk}=\frac{1}{3}\left(\nabla_i \omega_{jk}-\nabla_j\omega_{ik}+\nabla_k\omega_{ij}\right)
\end{equation}
It follows that
\begin{equation}
-d^*d\omega_{ij}=\theta_{ij}=g^{pq}\left(\nabla_p\nabla_q \omega_{ij}-\nabla_p\nabla_i\omega_{qj}+\nabla_p\nabla_j\omega_{qi}\right)
\end{equation}
Recall that we also have 
\[
\theta_{ij}=g^{pq}\left(\p^2_{p, q}\omega_{ij}-\p^2_{p, i}\omega_{qj}+\p^2_{p, j}\omega_{qi}\right)+O(1).
\]
It is also a direct computation to check that for any two-form $\phi$,
\begin{equation}
\nabla_p\nabla_q \phi_{ij}-\nabla_p\nabla_i\phi_{qj}+\nabla_p\nabla_j\phi_{qi}=\p^2_{p, q}\phi_{ij}-\p^2_{p, i}\phi_{qj}+\p^2_{p, j}\phi_{qi}+O(1)
\end{equation}
We continue to compute, 
\[
\frac{\p J^j_i}{\p t}=K^j_i=\frac{1}{2}(\theta_{ip}-\theta_{kl}J^k_iJ^l_p)g^{jp}
\]
We can then compute
\begin{equation}
\frac{\p g_{ij}}{\p t}=\frac{1}{2}\left(\theta_{ik}J^k_j+\theta_{jk}J^k_i\right):=h_{ij}. 
\end{equation}
 We can then compute,
\begin{equation}\label{E3.5-1}
\begin{split}
h_{ij}=&\frac{1}{2}g^{pq}\left(\p^2_{p,q} \omega_{ik}-\p^2_{p,i}\omega_{qk}+\p^2_{p,k}\omega_{qi}\right)J^k_j\\
&+\frac{1}{2}g^{pq}\left(\p^2_{p, q}\omega_{jk}-\p^2_{p, j}\omega_{qk}+\p^2_{p, k}\omega_{qj}\right)J^k_i+O(1)\\
=&g^{pq}\p^2_{p, q} g_{ij}-\frac{1}{2}g^{pq} \left(\omega_{ik}\p^2_{p, q} J^k_j+\omega_{jk}\p^2_{p, q} J^k_i\right)\\
&+\frac{1}{2}g^{pq}\left(J^k_j\p^2_{p, k}\omega_{qi}+J^k_i\p^2_{p, k}\omega_{qj}-J^k_j\p^2_{p,i}\omega_{qk}-J^k_i \p^2_{p, j}\omega_{qk}\right)\\
=&g^{pq}\p^2_{p, q} g_{ij}+\tilde h_{ij}
\end{split}
\end{equation}

Given the variation of the metric, one can compute directly the variation of connections and curvatures. These formulas are well-known, for example, see \cite{CLN}. First we have the evolution equation for the connections, 
\begin{prop}\label{P3-connection}The variation of the connection is given by,
\begin{equation}\label{E3.5-2}
\frac{\p }{\p t}\Gamma^k_{ij}=\frac{1}{2} g^{kl}\left(\nabla_i h_{lj}+\nabla_j h_{li}-\nabla_l h_{ij}\right)
\end{equation}
\end{prop}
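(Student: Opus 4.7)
The plan is a direct computation starting from the coordinate formula
\[
\Gamma^k_{ij} = \frac{1}{2} g^{kl}\bigl(\p_i g_{jl} + \p_j g_{il} - \p_l g_{ij}\bigr).
\]
The conceptual point I want to exploit first is that although $\Gamma^k_{ij}$ is not a tensor, the time derivative $\p_t \Gamma^k_{ij}$ is the difference of two linear connections at infinitesimally close times and is therefore a genuine $(1,2)$-tensor field. This tensoriality is what makes the identity coordinate-invariant, and it licenses proving the claim pointwise in a chart of my choosing.

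To carry out the computation, I would fix an arbitrary point $p \in M$ together with a time $t_0$, and choose normal coordinates for the metric $g(t_0)$ centered at $p$. At $p$ at time $t_0$ one then has $g_{ij} = \delta_{ij}$, $\p_k g_{ij} = 0$, and all Christoffel symbols vanish, so partial derivatives of tensors at $p$ coincide with covariant derivatives.

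Applying $\p_t$ to the coordinate formula, the time derivative either hits $g^{kl}$ or the bracketed expression. The first possibility produces a term proportional to $\p g$, which vanishes identically at $p$ in these normal coordinates; so that contribution drops out. The remaining contribution equals
\[
\frac{1}{2} g^{kl}\bigl(\p_i h_{jl} + \p_j h_{il} - \p_l h_{ij}\bigr),
\]
using $\p_t g_{ij} = h_{ij}$ and interchanging $\p_t$ with the coordinate derivatives. At $p$, because $\Gamma = 0$ there, every partial derivative of the tensor $h$ coincides with its covariant derivative, so this pointwise expression is exactly $\tfrac{1}{2} g^{kl}\bigl(\nabla_i h_{lj} + \nabla_j h_{li} - \nabla_l h_{ij}\bigr)$ at $p$. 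Since $p$ was arbitrary and both sides are tensors, the identity propagates to a global equality.

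I do not anticipate a serious obstacle here. The only subtlety to verify is the tensoriality of $\p_t \Gamma^k_{ij}$, which is what legitimizes the reduction to normal coordinates; this is standard and, as noted in the excerpt, a parallel computation can be found in \cite{CLN}. If one preferred to avoid the tensoriality remark, one could alternatively compute in arbitrary coordinates by writing $\p_t\p_i g_{jl} = \nabla_i h_{jl} + \Gamma^m_{ij} h_{ml} + \Gamma^m_{il} h_{jm}$ and observing that the inhomogeneous $\Gamma \cdot h$ pieces cancel in the combination $\p_i h_{jl} + \p_j h_{il} - \p_l h_{ij}$ after antisymmetrization in the appropriate indices; the final answer is the same.
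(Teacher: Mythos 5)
Your proof is correct and is exactly the standard computation the paper has in mind: the paper itself offers no proof, simply noting that the formula is well-known and citing \cite{CLN}, where the same normal-coordinates argument (tensoriality of $\p_t\Gamma^k_{ij}$, vanishing of $\p g$ and $\Gamma$ at the center point) appears. The only quibble is in your closing remark: in arbitrary coordinates the $\Gamma\cdot h$ terms do not cancel entirely within the bracket $\p_i h_{jl}+\p_j h_{il}-\p_l h_{ij}$ (a residual $2\Gamma^m_{ij}h_{ml}$ survives); it is cancelled by the term coming from $\p_t g^{kl}$, which no longer vanishes away from the center of normal coordinates.
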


To compute the evolution equation of the curvature, we can use the following expression,
\[
\begin{split}
R_{ijkl}=&\left(\p_i\Gamma^p_{jk}-\p_j \Gamma^p_{ik}\right)g_{pl}+\Gamma*\Gamma\\
=&\frac{1}{2}\left(\p^2_{i,k}g_{jl}+\p^2_{j, l}g_{ik}-\p^2_{i,l}g_{jk}-\p^2_{j, k}g_{il}\right)+\Gamma*\Gamma,
\end{split}
\]
where $\Gamma*\Gamma$ denotes two quadratic terms of connections (with possible contractions by the metric $g$). We have,
\begin{prop}
\begin{equation}\label{E3.5-c}
\begin{split}
\p_t R_{ijkl}=&\frac{1}{2}\left(\nabla_i\nabla_k h_{jl}+\nabla_j\nabla_l h_{ik}-\nabla_j\nabla_k h_{il}-\nabla_i\nabla_l h_{jk}\right)\\
=&\frac{1}{2}\left(\p^2_{i,k}h_{jl}+\p^2_{j, l}h_{ik}-\p^2_{i,l}h_{jk}-\p^2_{j, k}h_{il}\right)+\mbox{l.o.t}\\
=&D R_{ijkl}+\frac{1}{2}\left(\p^2_{i,k}\tilde h_{jl}+\p^2_{j, l}\tilde h_{ik}-\p^2_{i,l}\tilde h_{jk}-\p^2_{j, k}\tilde h_{il}\right)+\mbox{l.o.t},
\end{split}
\end{equation}
where denote the operator $D$ to be
\[
D=g^{pq}\nabla_p\nabla_q=-\nabla^*\nabla.
\]
\end{prop}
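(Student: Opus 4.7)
The plan is to derive \eqref{E3.5-c} by differentiating the Riemann tensor using Proposition \ref{P3-connection}, then converting covariant second derivatives of $h$ into coordinate second derivatives, and finally substituting \eqref{E3.5-1} to separate the Laplacian piece from the $\tilde h$ contribution.

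First I would start from $R^m_{ijk}=\p_i\Gamma^m_{jk}-\p_j\Gamma^m_{ik}+\Gamma^m_{ip}\Gamma^p_{jk}-\Gamma^m_{jp}\Gamma^p_{ik}$ and differentiate in $t$. Since the variation $\p_t\Gamma^k_{ij}$ is a tensor, a standard rearrangement turns the coordinate derivatives $\p_i$ into covariant ones at the cost of terms that cancel against the $\Gamma*\Gamma$ piece, giving
\[\p_t R^m_{ijk}=\n_i(\p_t\Gamma^m_{jk})-\n_j(\p_t\Gamma^m_{ik}).\]
Plugging in \eqref{E3.5-2} and lowering $m$ with $g_{lm}$ (the contribution of $\p_t g_{lm}=h_{lm}$ here is of the form $Rm*h$ and is absorbed into l.o.t.), the pair-exchange symmetry $h_{ij}=h_{ji}$ produces the first identity of \eqref{E3.5-c}.

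Next I would convert each $\n_i\n_k h_{jl}$ into $\p^2_{i,k}h_{jl}$ plus correction terms of the schematic form $\p\Gamma*h+\Gamma*\p h+\Gamma*\Gamma*h$. All such corrections carry at most one derivative of $h$, equivalently at most two derivatives of $\omega$, and fall into l.o.t., giving the second identity. Finally, I substitute \eqref{E3.5-1} in the form $h_{ij}=g^{pq}\p^2_{p,q}g_{ij}+\tilde h_{ij}$. For the first summand, the coordinate expression $R_{ijkl}=\tfrac{1}{2}(\p^2_{i,k}g_{jl}+\p^2_{j,l}g_{ik}-\p^2_{i,l}g_{jk}-\p^2_{j,k}g_{il})+\Gamma*\Gamma$ lets one commute fourth-order partials to obtain
\[\tfrac{1}{2}\left(\p^2_{i,k}(g^{pq}\p^2_{p,q}g_{jl})+\p^2_{j,l}(g^{pq}\p^2_{p,q}g_{ik})-\p^2_{i,l}(g^{pq}\p^2_{p,q}g_{jk})-\p^2_{j,k}(g^{pq}\p^2_{p,q}g_{il})\right)=g^{pq}\p^2_{p,q}R_{ijkl}+\text{l.o.t.},\]
and one more commutation returns this to $g^{pq}\n_p\n_q R_{ijkl}=D R_{ijkl}$ up to l.o.t. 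The remaining $\tilde h_{ij}$ contribution reproduces verbatim the second group of terms on the third line of \eqref{E3.5-c}.

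The main obstacle is purely bookkeeping: one must verify that each of the commutators hidden in $\n\n \leftrightarrow \p\p$, in lowering the index $m$ to $l$, and in the explicit form of $\tilde h_{ij}$ from \eqref{E3.5-1} produces only terms involving curvature, $h$, and at most one derivative of $h$. No step should generate a genuine third-derivative term in $\omega$, since otherwise the principal symbol $D R_{ijkl}$ extracted on the last line of \eqref{E3.5-c} would be contaminated and the parabolic bootstrap needed for Theorem \ref{T-5} would fail. Keeping track uniformly in terms of the assumed bounds on $|Rm|$, $|\n\omega|$, $|\n^2\omega|$ is what makes the remainder terms legitimately lower order.
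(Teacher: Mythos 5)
Your proposal is correct and follows essentially the same route as the paper, which simply invokes the standard variation formulas for the connection and curvature (citing Chow--Lu--Ni) together with the coordinate expression $R_{ijkl}=\tfrac{1}{2}(\p^2_{i,k}g_{jl}+\p^2_{j,l}g_{ik}-\p^2_{i,l}g_{jk}-\p^2_{j,k}g_{il})+\Gamma*\Gamma$ and the substitution $h_{ij}=g^{pq}\p^2_{p,q}g_{ij}+\tilde h_{ij}$ from \eqref{E3.5-1}. Your bookkeeping of the commutator and index-lowering corrections as lower-order terms matches the (admittedly loose) sense in which the paper uses ``l.o.t.''
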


We can also compute the evolution equation of $\nabla \omega$. Note that $\nabla \omega, \nabla J$ are mutually determined each other; indeed
\[
\nabla_k(J^j_i)=g^{jl}J_{a}^cJ_l^b\nabla_i(\omega_{bc})
\]
$\nabla \omega=\nabla J=0$ is equivalent to the fact that $(\omega, J, g)$ is a Kahler structure. 
\begin{prop}We compute
\begin{equation}\label{E3.5-3}
\begin{split}
\frac{\p}{\p t}\nabla_k\omega_{ij}=&\nabla_k\frac{\p}{\p t} \omega_{ij}-(\p_t \Gamma^m_{ki}) \omega_{mj}-(\p_t\Gamma^m_{kj})\omega_{im}\\
&=\nabla_k(\theta_{ij})-(\p_t \Gamma^m_{ki}) \omega_{mj}-(\p_t\Gamma^m_{kj})\omega_{im}.
\end{split}
\end{equation}
\end{prop}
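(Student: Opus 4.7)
The statement is a direct commutation formula: it expresses $\partial_t\nabla_k\omega_{ij}$ as the covariant derivative of $\partial_t\omega_{ij}$ plus correction terms coming from the time dependence of the Christoffel symbols. My plan is to prove it by unwinding the definition of $\nabla_k\omega_{ij}$ in a local chart and differentiating term by term in $t$, using only the product rule and the identity $\partial_t\partial_k = \partial_k\partial_t$ on coordinate derivatives.

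First I would write, in a fixed coordinate system,
\[
\nabla_k\omega_{ij}=\partial_k\omega_{ij}-\Gamma^m_{ki}\omega_{mj}-\Gamma^m_{kj}\omega_{im},
\]
where the Christoffel symbols $\Gamma^m_{ki}=\Gamma^m_{ki}(t)$ depend on $t$ through $g(t)$. Applying $\partial_t$ and using the product rule yields
\[
\partial_t\nabla_k\omega_{ij}=\partial_k\partial_t\omega_{ij}-\Gamma^m_{ki}\partial_t\omega_{mj}-\Gamma^m_{kj}\partial_t\omega_{im}-(\partial_t\Gamma^m_{ki})\omega_{mj}-(\partial_t\Gamma^m_{kj})\omega_{im}.
\]
The first three terms on the right-hand side are precisely the expansion of $\nabla_k(\partial_t\omega_{ij})$ in the same chart (using that $\partial_t\omega$ is a two-tensor, so only the same two lower-index Christoffel corrections appear). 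This establishes the first equality in \eqref{E3.5-3}.

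For the second equality I would simply substitute the evolution equation $\partial_t\omega_{ij}=\theta_{ij}=-d^\ast d\omega_{ij}$ from \eqref{Hflow3} into $\nabla_k(\partial_t\omega_{ij})$ and read off the result. No additional inputs are needed beyond what was just proved in Proposition \ref{P3-connection} for $\partial_t\Gamma^m_{ij}$, should one wish to expand the last two correction terms explicitly in terms of $h_{ij}$ via \eqref{E3.5-2}.

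There is essentially no obstacle here: the identity is tensorial, so although the derivation is performed in coordinates, both sides transform as $(0,3)$-tensors and the equality is coordinate-independent. The only care required is bookkeeping of the two lower indices $i,j$ of $\omega$ (hence the two correction terms $(\partial_t\Gamma^m_{ki})\omega_{mj}$ and $(\partial_t\Gamma^m_{kj})\omega_{im}$ with the correct symmetry), and noting that the computation does \emph{not} require picking normal coordinates nor any specific property of the $d^\ast d$-flow beyond the identification $\partial_t\omega=\theta$. The formula therefore holds for any smooth one-parameter family of $(0,2)$-tensors $\omega(t)$ and any smooth family of connections $\Gamma(t)$, and specialising to our setting gives \eqref{E3.5-3}.
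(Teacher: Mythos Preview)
Your proof is correct. The paper does not supply a separate proof for this proposition (it is stated as a direct computation), and your coordinate expansion via the product rule is precisely the standard verification one would give.
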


For a geometric quantity $Q$, we would like to write the evolution equation of the form 
\[
\p_t Q=D Q+r(\omega, J),
\]
It turns out that $r(\omega, J)$ consists of not only lower order terms; for example \eqref{E3.5-1}, \eqref{E3.5-3} and \eqref{E3.5-c} seem to be rather complicated.
We can also consider evolution equation of various forms such as $d\omega, d^*d\omega$;  such quantities behave nicely along the flow. For example
we compute, 
\[
\frac{\p }{\p t} d\omega=- d d^* d \omega=-\Delta d\omega. 
\] 
And we also have
\[
\frac{\p}{\p t}d^*d\omega=-\Delta d^*d\omega+\mbox{l.o.t}
\]
We recall the Bochner-Weinzebock formula.  For any two-form $\psi$,
\[
\Delta \psi_{ij}=\nabla^*\nabla \psi_{ij}+\left(R_{i}^p\psi_{pj}+R_{j}^p\psi_{ip}\right)+g^{kl}R_{ijk}^p\psi_{lp}
\]
For any $p$-form $\psi$, we have the following, \[
\Delta\psi=-D \psi+Rm*\psi.
\]
In particular, we can write, for example, 
\[
\frac{\p}{\p t}d^*d\omega=D d^*d\omega+Rm*d^*d\omega+\mbox{l.o.t}
\]
Nevertheless,  it does not seems to be straightforward to establish an extension and/or improved regularity result for the $d^*d$ flow in the usual fashion. Instead we apply the standard regularity theory for parabolic systems to the modified flow. 

\begin{proof}Let $(\omega, J)$ be the smooth solution of the flow in $[0, T]$ with $|\nabla \omega|, |\nabla^2\omega|, |Rm|$ all bounded. Consider the equivalent system for $(\tilde\omega, \tilde J)$,
\begin{equation}\label{Hflow3.5-1}
\begin{split}
&\frac{\p \tilde \omega}{\p t}=-d^*d\tilde \omega+L_X\tilde\omega\\
&\frac{\p \tilde J}{\p t}=K_2(\tilde\omega, \tilde J)+L_X\tilde J,
\end{split}
\end{equation}
where $X=X(\tilde \omega, \tilde J)$. By the uniqueness and existence of the flow, we know that this system also has a unique smooth solution in $[0, T]$, such that $\omega=\phi^*\tilde \omega, J=\phi^*\tilde J$, where $\phi$ is the diffeomorphism generated by $X$. Hence we only need to prove the extension for \eqref{Hflow3.5-1}. By the results in Section 3.3, the system has the structure as follows,
\[
\begin{split}
&\frac{\p \tilde \omega}{\p t}=\tilde g^{pq}\p^2_{p,q}\tilde \omega+O(\p\tilde \omega, \p \tilde J)\\
&\frac{\p \tilde J}{\p t}=\frac{1}{2}\tilde g^{pq}\left(\p^2_{p, q}\tilde J^j_i-\p^2_{p, q}\tilde J^b_a \tilde g_{ib} \tilde g^{aj}\right)+O(\p^2\tilde\omega, \p\tilde \omega, \p \tilde J).
\end{split}
\]
First of all, if $|\nabla \omega|, |\nabla^2\omega|, |Rm|$ are all bounded, then all the metrics are uniformly equivalent in $C^{1, \alpha}$ norm. Applying the maximal regularity theory to the first system for $\tilde \omega$ (we know all coefficients are uniformly in $C^\alpha$), this follows that $\omega$ is in $C^{2, \alpha}$. Now we consider the second system for $\tilde J$, which is parabolic in $J$. Since $\omega$ is in $C^{2, \alpha}$, the lower order term $O(\p^2\tilde\omega, \p\tilde \omega, \p \tilde J)$ is also in $C^\alpha$. The key is that this term does not involve second derivatives of $\tilde J$. Hence the maximal regularity implies that $\tilde J$ is in $C^{2, \alpha}$. Then by the standard boot-strapping argument we know that $\tilde \omega, \tilde J$ are uniformly bounded in $C^{k, \alpha}$ for any $k$. This allows us to extend the flow across time $T$, in a uniform way. This completes the proof. 
\end{proof}

\begin{rmk}The paper was written in 2014 and the results were presented in University of British Columbia and Stony Brook University. We have seen the recent work of J. Lotay and Y. Wei \cite{LW} on the Lalacian flow for closed $G_2$ structure. In particular the authors proved a Shi's type estimate and this will give the desired estimates and extension. Our flow shares some similarities with Laplacian flow introduced by R. Bryant on closed $G_2$ structure \cite{BX}. A similar strategy as in Lotay-Wei should give a Shi's type estimate for $d^*d$-flow and $d^*d$-Ricci flow. This will be discussed in a separate paper. 
\end{rmk}

\subsection{Existence of a symplectic structure}

In this section, we propose a conjecture on the existence of a symplectic structure on a compact  almost Hermitian manifold when dimension is six or above. Let $(M, \omega, J)$ be such an almost Hermitian structure. If $d\omega=0$, then $d^*\omega=0$ since $*\omega=\omega^{n-1}/(n-1)!$. Note that when $n=2$ (hence $M$ is of dimension 4), $d\omega=0$ is equivalent to $d^*\omega=0$. However, when the dimension is six or above, $d^*\omega=0$ is strictly weaker. (In the Hermitian setting, such a structure is called a ``balance" form and it has been studied extensively). Nevertheless, 
first we require $d(\omega^{n-1})=0$. Hence $\omega^{n-1}$ defines a cohomology class in $H^{2n-2}(M, \R)$ but it can be zero class (even though $\omega$ is nondegenerate). An example is the round sphere of dimension six, which satisfies $d(\omega^2)=0$. The second requirement is that the cohomology class $[\omega^{n-1}]$ is not zero. We consider the cohomology class $[\alpha]$ in $H^2(M, \R)$, which is the Poincare dual of $[\omega^{n-1}]/(n-1)!$ by the duality induced by the Hodge star. Hence we need $[\alpha]^n>0$. In the end we require $[\alpha]^{n-1}=[\omega^{n-1}]$. 
Clearly, if $d\omega=0$, these are all obvious necessary conditions. The only difference with the obstructions we have discussed in the introduction is that we specify these conditions in a more geometric way, instead of in purely topological datum ( note that the almost Hermitian structure and the Hodge $*$ are only auxiliary since any such structure compatible with $\omega$ will be good enough). Hence we propose the following, 

\begin{q}\label{q-s}
Let $(M, \omega, J)$ be an almost Hermitian structure such that $d(\omega^{n-1})=0$. Denote $[\alpha]$ to be the cohomology class in $H^2(M, \R)$ such that 
\[
[\alpha]=*[\omega^{n-1}]/(n-1)!.
\]
We assume further that $[\alpha]^n>0$ and $[\alpha]^{n-1}=[\omega^{n-1}]$. Is there a symplectic structure in $[\alpha]$?
\end{q}

Even though $d(\omega^{n-1})=0$ is still a nontrivial condition, we hope Problem \ref{q-s} can give some approach in geometric analysis to the existence of symplectic forms. A trouble using the $d^*d$-flow to approach this problem is that $d(\omega^{n-1})=0$ seems not to be a condition which is preserved along the flow.

\subsection{Laplacian flow}

Similarly we can also consider the Laplacian flow for a compatible pair $(\omega_0, J_0)$, 

\begin{equation}\label{Lflow-1}
\begin{split}
&\frac{\p \omega}{\p t}=-\Delta \omega\\
&\frac{\p J}{\p t}=K,
\end{split}
\end{equation}
where $K$ is given by
\begin{equation}
g(Kx, y)=(-\Delta \omega)_{J_{-}}.
\end{equation}

However, due to the complexity of the linearized operator of $\Delta$, we do not succeed proving the short time existence at the moment. We can ask the following, 

\begin{q} Does \eqref{Lflow-1} have a smooth short time solution starting from an almost Hermitian structure? 
\end{q}

\section{$d^*d$-Ricci flow}
As in last section, for a variation $(\theta, K)$ of a compatible pair $(\omega, J)$,  the choice of $K$ is not unique even with $\theta$ fixed. When we deform $\omega$ by $-d^*d\omega$, there are still fruitful choices for $K$. A particularly interesting  example is that $B=B(\omega, J)$ is determined by the Ricci curvature (note that $g(Bx, y)$ should be symmetric and anti-$J$ invariant),
\[
g(Bx, y)=Ric(Jx, y)+Ric(x, Jy).
\]
Hence we study the $d^*d$-Ricci flow for a compatible pair $(\omega, J)$, 
\begin{equation}\label{HRflow}
\begin{split}
&\frac{\p \omega}{\p t}+d^*d\omega=0\\
&\frac{\p J}{\p t}=K_1(\omega, J)+B(\omega, J),
\end{split}
\end{equation}
where $K_1$ is as in \eqref{E3-k1}. First we note the following simple fact, 
\begin{prop}Suppose there exists a smooth solution $(\omega, J)$ of the $d^*d$-Ricci flow and the initial data is symplectic, $d\omega_0=0$. Then $\omega=\omega_0$ and the flow is reduced to 
\[
\frac{\p J}{\p t}=B(\omega_0, J).
\]
\end{prop}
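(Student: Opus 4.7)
The plan is to show first that the closure condition $d\omega_0 = 0$ is preserved along the $d^*d$-Ricci flow. Once this is in hand, Lemma \ref{Lchar} gives $d^*d\omega(t) \equiv 0$, so the first equation of \eqref{HRflow} reduces to $\partial_t\omega \equiv 0$, forcing $\omega(t) = \omega_0$. By the defining formula \eqref{E3-k1}, vanishing of $d^*d\omega$ also forces $K_1(\omega_0, J) \equiv 0$, so the second equation collapses to $\partial_t J = B(\omega_0, J)$, which is precisely the claim.

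To establish preservation of $d\omega = 0$ I would set $\alpha(t) = d\omega(t)$ and, using that exterior differentiation commutes with $\partial_t$, rewrite the first equation of \eqref{HRflow} as
\begin{equation*}
\partial_t \alpha \;=\; d(\partial_t\omega) \;=\; -d(d^*d\omega) \;=\; -d\,d^*\alpha,
\end{equation*}
where $d^* = d^*_{g(t)}$ is the adjoint associated with the Hermitian metric at time $t$. Note that this evolution for $\alpha$ involves only the first equation of \eqref{HRflow}, so the particular choice of the Ricci-type term $B$ plays no role at this step. A standard energy estimate then yields
\begin{equation*}
\frac{d}{dt}\|\alpha\|^2_{g(t)} \;=\; -2\|d^*\alpha\|^2 \;+\; \int_M \alpha\wedge(\partial_t *_t)\,\alpha \;\leq\; C\|\alpha\|^2,
\end{equation*}
since $(\omega(t), J(t))$ is smooth on its existence interval $[0,T]$ and therefore the variation of the Hodge star is a uniformly bounded multiplier on two-forms. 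Combined with the initial condition $\alpha(0) = d\omega_0 = 0$, Gr\"onwall's inequality forces $\alpha(t) \equiv 0$, i.e.\ $d\omega(t) = 0$ for all $t \in [0,T]$, as desired.

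The main technical point is the energy estimate above: one must carefully track the time-dependence of both $d^*$ and of $*_t$, though the smoothness of the postulated solution keeps these metric-variation contributions tame, so Gr\"onwall applies without difficulty. An alternative route would be to construct the pair $\omega(t) \equiv \omega_0$, $\partial_t J = B(\omega_0, J)$ directly as a solution and appeal to uniqueness of the $d^*d$-Ricci flow; but since the excerpt establishes uniqueness only for the plain $d^*d$-flow (Theorem \ref{T-uniqueness}) and the addition of $B$ modifies the symbol in the $J$-direction, the direct energy argument above is preferable because it relies solely on the smoothness assumption already contained in the hypothesis.
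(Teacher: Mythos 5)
Your argument is correct and is essentially the paper's own proof: the paper likewise establishes the claim by computing $\p_t \int_M |d\omega|^2\, dv$, observing that the leading term $(d\omega, -dd^*d\omega)=-\|d^*d\omega\|^2$ is nonpositive while the metric-variation terms are controlled by $C\int_M|d\omega|^2$, and concluding $d\omega\equiv 0$ by Gr\"onwall. Your additional remarks (spelling out $\p_t\alpha=-dd^*\alpha$ and noting that $d^*d\omega=0$ forces $K_1=0$ via \eqref{E3-k1}) only make explicit steps the paper leaves implicit.
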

\begin{proof}The proof is straightforward. Suppose $d\omega_0=0$, we claim that $d\omega=0$ along the flow. Hence $\p_t \omega=0$. This completes the proof if we establish the claim. Indeed we have
\[
\p_t \int_M |d\omega|^2 dv= \int_M (d\omega, -dd^*d\omega)dv+|d\omega|^2 \p_t dv+\p_t g *d\omega*d\omega dv\leq C\int_M|d\omega|^2 dv.
\]
By Gronwall's inequality, we know that $\int_M |d\omega|^2dv=0$. 
\end{proof}
Hence if $\omega_0$ is a symplectic form ($d\omega_0=0$), the flow is then reduced to the anti-complexied Ricci flow, as studied in \cite{LW}. 

\begin{rmk}The anti-complexied Ricci flow is the gradient flow of the functional of compatible almost complex structures on a compact symplectic manifold $(M, \omega)$,
\[
\int_M |\nabla J|^2 dv
\]
The proof of its short time existence presented in \cite{LW} is already technical, see comments in \cite{ST2} for example; we  thank Prof. G. Tian for bringing the complexied-Ricci  flow to our attention in 2007. Our motivation is  different since our main goal is to deform a non-degenerate two form to a symplectic form. Nevertheless, the anti-complexied Ricci flow is a special case of $d^*d\omega$-Ricci flow. We should mention that one can also consider Laplacian-Ricci flow. 
\end{rmk}

\subsection{Short time existence}

First we prove the short time existence and uniqueness of the $d^*d\omega$-Ricci flow. An advantage is that our flow is invariant under the diffeomorphism group, compared with the anti-complexied Ricci flow. 

\begin{thm}For any initial almost Hermitian structure $(\omega_0, J_0)$, there exists a unique smooth solution of the $d^*d$-Ricci flow for a short time. 
\end{thm}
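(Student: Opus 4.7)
The plan is to mirror the DeTurck-then-Hamilton strategy developed in Sections~3.2--3.3, enlarging that construction only where the additional Ricci piece $B(\omega, J)$ requires care. Since $g(Bx,y) = Ric(Jx,y) + Ric(x, Jy)$ is symmetric and anti-$J$-invariant, Proposition~\ref{P-infinitisimal} already guarantees that $K_1 + B$ is an admissible infinitesimal variation, so the flow is well-defined as a geometric evolution of almost Hermitian structures.

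First I would introduce a DeTurck-modified system for a tamed pair $(\omega, J)$,
\begin{equation*}
\begin{split}
\p_t \omega &= -d^*d(\omega_J) + L_X(\omega_J) + g^{pq}\bar\nabla_p\bar\nabla_q(\omega_{J_{-}}),\\
\p_t J &= K_2(\omega_J, J) + \tilde B(\omega_J, J) + L_X J,
\end{split}
\end{equation*}
where $X$ is the vector field of \eqref{E-x} augmented by the Ricci--DeTurck correction $W^k = g^{pq}(\Gamma^k_{pq} - \bar\Gamma^k_{pq})$, and $\tilde B$ is produced from $B$ by the same ``linearize the quadratic-in-$J$ identity'' device that manufactures $K_2$ out of $K_1$ in Proposition~\ref{P3-k12}. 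The Ricci--DeTurck correction plays exactly the role it plays for Ricci flow: combined with the second-order part of $Ric$ it turns $\tilde B$ into an elliptic operator on $g$, and hence on $J$ via the identity $g_{ij} = \omega_{ik}J^k_{\ j}$. Since neither $X$ nor $W$ involves derivatives of $J$, the $-d^*d$-DeTurck structure from Proposition~\ref{P-X} on the $\omega$-slot is preserved, and by \eqref{E3-k2} together with the Ricci contribution, the full principal symbol on $(\omega, J)$ takes a block-triangular form with strictly positive diagonal. The modified tamed-pair system is thus strictly parabolic, and the standard maximal regularity theory in the Banach chart of \eqref{E3-l} around $J_0$ produces a unique short-time smooth solution.

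Preservation of compatibility is then obtained by repeating the Gronwall argument of Proposition~\ref{P-c}. Because $\tilde B$ is symmetric and anti-$J$-invariant, its contribution to the evolution of $\omega_{J_{-}}$ obeys the same algebraic cancellation that led to \eqref{E-c}, so $\omega_{J_{-}}$ still satisfies
\begin{equation*}
\p_t \int_M |\omega_{J_{-}}|^2 \, d\mu \leq - \int_M |\nabla \omega_{J_{-}}|^2 \, d\mu + C \int_M |\omega_{J_{-}}|^2 \, d\mu,
\end{equation*}
and Gronwall forces $\omega_{J_{-}} \equiv 0$. On the resulting almost Hermitian solution Proposition~\ref{P3-k12} gives $K_2 = K_1$, and pulling back by the diffeomorphism generated by $-X$ produces a smooth short-time solution of \eqref{HRflow}. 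For uniqueness I would follow Theorem~\ref{T-uniqueness}: the integrability condition $L(\omega,J)(\theta, K) = (d^*\theta, 0)$ still satisfies $L \circ E = 0$ in top order for $E = (-d^*d\omega,\, K_1 + B)$, and $\sigma DE(\xi)$ restricted to $\text{Null}(\sigma L(\xi)) = \text{Null}(d^*) \oplus \{K : JK + KJ = 0\}$ is strictly positive by the symbol computation above, so Hamilton's Theorem~\ref{thm-h} yields existence and uniqueness for the tamed-pair formulation; uniqueness on compatible pairs then descends via compatibility preservation exactly as in the proof of Theorem~\ref{T-uniqueness}.

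The main obstacle I expect is the coupled symbol analysis and the compatibility step. The Ricci term genuinely couples $\omega$ and $J$ through $g$, so one must check carefully that combining $X$ from \eqref{E-x} with the Ricci--DeTurck field $W$ really does diagonalize the principal symbol without destroying the cancellations that made \eqref{E-x} work on the $\omega$-equation, and that the rerun of the calculation leading to \eqref{E-c} produces the required algebraic cancellation of the $\tilde B$-contribution on $\omega_{J_{-}}$ using only the symmetry and anti-$J$-invariance of $Ric(J\cdot, \cdot) + Ric(\cdot, J\cdot)$. Once both are verified, the remainder of the argument is essentially bookkeeping on top of the proofs of Theorems~\ref{T-3} and~\ref{T-uniqueness}.
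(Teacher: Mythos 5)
Your overall architecture (DeTurck--modified tamed-pair system, preservation of compatibility via Gronwall, then Hamilton's Theorem \ref{thm-h} for uniqueness) is exactly the paper's, but the specific device you insert to handle the Ricci term --- augmenting $X$ by the Ricci--DeTurck field $W^k=g^{pq}(\Gamma^k_{pq}-\bar\Gamma^k_{pq})$ --- is both unnecessary and, as written, breaks the structure the rest of the argument depends on. Your claim that ``neither $X$ nor $W$ involves derivatives of $J$'' is false for $W$: the Christoffel symbols $\Gamma^k_{pq}$ involve $\p g$, and $g$ is determined by $(\omega_J,J)$, so $W$ involves first derivatives of $J$ (and of $\omega$). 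Consequently $L_W\omega$ and $L_WJ$ inject \emph{second} derivatives of $J$ into both evolution equations. This destroys the lower-triangular form
\[
\begin{pmatrix} I & 0\\ * & I+Q\end{pmatrix}
\]
of the principal symbol --- the $(1,2)$ block is no longer zero --- so positivity of the diagonal blocks no longer implies parabolicity, and the carefully arranged cancellation of Proposition \ref{P-X} (which gives $-d^*d\omega+L_X\omega=g^{pq}\p^2_{p,q}\omega+O(1)$) is also perturbed by the uncancelled second-order terms in $L_W\omega$. A fresh symbol computation for the coupled system would then be required, and it is not clear it comes out positive.

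The missing idea is that no extra gauge-fixing of the Ricci term is needed at all: one keeps $X$ exactly as in \eqref{E-x} and uses $B(\omega_J,J)$ as it stands, and the only thing to verify is that the principal symbol $Q$ of $B$ on $J$ is \emph{nonnegative} definite (it is degenerate --- it has explicit zero eigenvectors, essentially the computation of Le--Wang \cite{LW}). Since $K_2$ already contributes the strictly positive symbol $|\xi|^2 I$ to the $J$-equation, the $(2,2)$ block becomes $I+Q>0$ while the $(1,2)$ block stays zero, and parabolicity of the modified tamed-pair system follows with no change to $X$. Your related suggestion of manufacturing a separate $\tilde B$ out of $B$ by the device of Proposition \ref{P3-k12} is likewise superfluous: $B$ is already defined directly from the metric of the compatible pair $(\omega_J,J)$. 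The remaining steps of your proposal --- compatibility preservation using the identity $\omega_J(B\cdot,J\cdot)+\omega_J(J\cdot,B\cdot)=0$ in the analogue of \eqref{E-c}, and uniqueness via the integrability condition $L(\omega,J)(\theta,K)=(d^*\theta,0)$ and Theorem \ref{thm-h} as in Theorem \ref{T-uniqueness} --- do match the paper and go through once the symbol issue is repaired as above.
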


\begin{proof}The proof is essentially the same as in $d^*d$-flow. We denote the principle symbol of $B(\omega_J, J)$ on $J$ by $Q$ and we will show that $Q$ has only nonnegative eigenvalues in below. All other arguments are the same and  we shall keep it brief. 
We consider a more general system for a tamed pair $(\omega, J)$, 
\begin{equation}\label{HRflow-1}
\begin{split}
&\frac{\p \omega}{\p t}+d^*d\omega_J=L_X\omega_J+g^{pq}\bar \nabla_p\bar \nabla_q (\omega_{J_{-}})\\
&\frac{\p J}{\p t}=K_2(\omega_J, J)+B(\omega_J, J)+L_XJ.
\end{split}
\end{equation}
The notion is the same as in the last section, where $\bar \nabla$ is the covariant derivative of a fixed background metric $\bar g$, and $g$ is determined by the pair $(\omega_J, J)$, as well as $K_2$ and $X$. By the results in the last section, the principle symbol of this system reads
\[
\begin{pmatrix} I& 0\\
*& I+Q
\end{pmatrix}
\]
which is in particular elliptic (parabolic), and hence there exists a unique smooth solution for a short time. Next we want to show that if $(\omega_0, J_0)$ is compatible, it remains so along the flow. The argument is similar to Proposition \ref{P-c}. Indeed, we have $\omega_J(Bx, Jy)+\omega_J(Jx, By)=0$, it then follows the same argument, we have
\[
\p_t \omega_{J_{-}}=\left(g^{pq}\bar \nabla_p\bar \nabla_q \omega_{J_{-}}\right)_{J_{-}}-\omega_{J_{-}}(K\cdot, J\cdot)-\omega_{J_{-}}(J\cdot, K\cdot),
\]
where $K=K_2+L_XJ+B(\omega_J, J)$. A standard argument using Gronwall's inequality or the maximum principle implies that the compatible condition is preserved.
Now by a gauge transformation, we prove that there exists a smooth solution of the system (we assume $(\omega_0, J_0)$ is compatible) 
\begin{equation}\label{HRflow-1a}
\begin{split}
&\frac{\p \omega}{\p t}+d^*d\omega=0\\
&\frac{\p J}{\p t}=K_2(\omega, J)+B(\omega, J).
\end{split}
\end{equation}

Regarding the uniqueness, we consider the system for the tamed pair and apply Hamilton's theorem (see Theorem \ref{thm-h})
\begin{equation}\label{HRflow-2}
\begin{split}
&\frac{\p \omega}{\p t}+d^*d\omega_J=0\\
&\frac{\p J}{\p t}=K_2(\omega_J, J)+B(\omega_J, J).
\end{split}
\end{equation}
The integrability condition is still $L(\omega, J)(\theta, K)=(d^*\theta, K)$. Since $K_2+B$ is elliptic on $J$, then the linearized operator of the system is elliptic on $\Ker(d^*)\oplus \cK$. Hence the system has a unique smooth solution for a short time. Now if the initial data is compatible, \eqref{HRflow-1a} has a compatible solution $(\omega, J)$, which is clearly also a solution of \eqref{HRflow-2}. Hence we also show that the compatible condition is preserved by \eqref{HRflow-2}. By the uniqueness of \eqref{HRflow-2} again, this proves the uniqueness for \eqref{HRflow-1a}. This completes the proof, provided the following proposition.
\end{proof}

\begin{prop}The principle symbol $Q$ of $B$ on $J$ is nonnegative definite and $J\rightarrow K_2(\omega, J)+B(\omega, J)$ is an elliptic operator.
\end{prop}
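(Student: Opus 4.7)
The plan is to identify the principal symbol $Q$ of $B(\omega,J)$ as an operator on $J$, verify that $Q$ is nonnegative definite, and then conclude ellipticity of $J\mapsto K_2(\omega,J)+B(\omega,J)$ by combining $Q$ with the symbol $|\xi|^2 I$ of $K_2$ established in \eqref{E3-k2}; indeed $\sigma(K_2+B)(\xi)=|\xi|^2 I+Q(\xi)$ is then strictly positive definite for every $\xi\ne 0$.

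To compute $Q$ I would trace the chain $J\mapsto g\mapsto Ric\mapsto B$. Because $g_{ij}=\tfrac12(\omega_{ik}J^k_j+\omega_{jk}J^k_i)$ depends only algebraically on $(\omega,J)$, a variation $P=\delta J$ produces
\[
h_{ij}:=\delta g_{ij}=\tfrac12(\omega_{ik}P^k_j+\omega_{jk}P^k_i);
\]
since $B$ is first order in $Ric$ and $Ric$ is second order in $g$, the only second derivatives of $J$ reaching $B$ come through $h$. The symbol $Q$ is then obtained by feeding $h$ into the standard Lichnerowicz-type principal symbol
\[
\sigma(\delta Ric)(\xi)(h)_{ij}\;=\;-\tfrac12\bigl(|\xi|^2 h_{ij}+\xi_i\xi_j\,\text{tr}_g h-\xi_i\xi^p h_{pj}-\xi_j\xi^p h_{pi}\bigr)
\]
of the linearized Ricci tensor, and then forming $g_{jm}Q^m_i=J^k_i\,\sigma(\delta Ric)_{kj}+J^k_j\,\sigma(\delta Ric)_{ik}$ in accordance with the definition $g(Bx,y)=Ric(Jx,y)+Ric(x,Jy)$.

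For the nonnegativity $Q\ge 0$ two observations are central. First, the constraint $JP+PJ=0$ together with the cyclicity of trace forces $\text{tr}_g h=\text{tr}(JP)=0$, so the trace term in $\sigma(\delta Ric)$ drops. Second, at a point in a $g$-orthonormal, $J$-adapted frame (in which $\omega,J,g$ take their standard block forms) I would reduce to $\xi=e_1$ by rotational symmetry and compute $J^k_i\sigma(\delta Ric)_{kj}+J^k_j\sigma(\delta Ric)_{ik}$ directly on this $h$; the Laplace piece yields the manifestly nonnegative contribution $\tfrac12|\xi|^2|P|^2$ (after composition with $J$), while the divergence-type pieces, evaluated on an $h$ built from an anti-$J$-commuting $P$, reassemble into a nonnegative quadratic form rather than a destructive one. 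A conceptually cleaner and more robust route, which I would fall back on if the bookkeeping becomes painful, is the gradient-flow interpretation: as in \cite{LW}, on a symplectic manifold $B$ agrees, modulo lower-order terms, with the $L^2$-gradient of the energy $\int_M|\nabla J|^2\,d\mu$, a functional quadratic and convex in $\nabla J$, and the gradient flow of such a functional automatically has nonnegative principal symbol.

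The hard part will be the nonnegativity argument in the preceding paragraph: the divergence-type terms in $\sigma(\delta Ric)$ carry the opposite sign from the Laplace part and could a priori spoil positivity; one must use precisely the anti-$J$-commutation $JP+PJ=0$ and the special algebraic form of $h$ to see that after composition with $J$ these terms align constructively. Once $Q\ge 0$ is in hand, ellipticity of $J\mapsto K_2(\omega,J)+B(\omega,J)$ follows immediately from the symbol of $K_2$ already recorded in \eqref{E3-k2}, since adding a nonnegative definite operator to $|\xi|^2 I$ preserves strict positive definiteness for $\xi\ne 0$.
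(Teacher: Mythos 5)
Your setup coincides with the paper's: both trace the chain $J\mapsto h=\delta g\mapsto \delta Ric\mapsto \delta B$, use the standard principal symbol of the linearized Ricci tensor, and reduce the question to the sign of a quadratic form in $h$. Your observation that $\operatorname{tr}_g h=\operatorname{tr}(JP)=0$ (from $JP+PJ=0$ and antisymmetry of $J$ with respect to $g$) is correct and is a genuine simplification, though the paper keeps the trace term in its displayed symbol
\[
[\sigma DB](\xi)(h)=\tfrac{1}{2}\left(\omega^{-1}\right)^{ki}(\delta_i^a\delta_j^b-J^a_iJ^b_j)\,g^{pq}\left(\xi_q\xi_b h_{ap}+\xi_p\xi_a h_{bq}-\xi_p\xi_qh_{ab}-\xi_a\xi_bh_{pq}\right).
\]

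The gap is exactly where you say the hard part is: the claim that the divergence-type terms ``reassemble into a nonnegative quadratic form'' is the entire content of the proposition, and you assert it rather than prove it. The paper does not carry out this computation either, but it closes the step by pointing to a specific place where it is done --- Le--Wang \cite{LW}, Proposition 3.12 and Sections 3.6, 3.9, where precisely this symbol is diagonalized --- and it records the kernel explicitly: $h_{ab}=(\delta_i^a\delta_j^b-J^a_iJ^b_j)\xi_i\xi_j$ and $h_{ab}=J^p_b\xi_a\xi_p+J^p_a\xi_b\xi_p$ are zero eigenvectors, with all remaining eigenvalues positive. Note in particular that $Q$ is only nonnegative, not positive, so your concluding sentence (``adding a nonnegative operator to $|\xi|^2 I$ preserves strict positivity'') is the right logic and matches the paper's block-triangular symbol $\begin{pmatrix} I&0\\ *&I+Q\end{pmatrix}$. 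Your fallback via the gradient-flow interpretation is only a heuristic and does not substitute for the computation: the functional $\int_M|\nabla J|^2\,d\mu$ and its gradient-flow identification with $B$ live on a fixed symplectic manifold in \cite{LW}, whereas here $B(\omega_J,J)$ must be shown to have nonnegative symbol at a general tamed pair in the modified system \eqref{HRflow-1}; moreover convexity of a first-order energy gives nonnegativity of the second variation, which controls the symbol only after one actually identifies the Euler--Lagrange operator with $B$ up to lower-order terms --- again a computation of the same difficulty as the one being avoided.
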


\begin{proof}This is essentially proved in \cite{LW} (see Proposition 3.12). The proof is straightforward and relies on the direct computation.
We write
\[
B^k_j=g^{ka}\left(J^b_aR_{bj}+J^b_jR_{ba}\right).
\]
Note that we also have
\[
B=\omega^{-1}(Ric-Ric(J\cdot, J\cdot))
\]
The leading order of the Ricci curvature reads
\[
R_{jk}=\frac{1}{2}g^{pq}\left(\p^2_{q, j}g_{kp}+\p^2_{q, k}g_{jp}-\p^2_{p, q}g_{jk}-\p^2_{j, k}g_{pq}\right).
\]
Now suppose $\delta J^k_j=K^k_j$, and denote $h_{ij}=\omega_{ik}K^k_j$, then the principle symbol of $DB$ on $K^k_j$ behaves the same as the principle symbol on $h_{ij}$
and we compute the principle symbol of $DB$ as
\[
\begin{split}
[\sigma DB](\xi)(h)=\frac{1}{2} \left(\omega^{-1}\right)^{ki} (\delta_i^a\delta_j^b-J^a_iJ^b_j) g^{pq} \left(\xi_q\xi_b h_{ap}+\xi_p\xi_a h_{bq}-\xi_p\xi_qh_{ab}-\xi_a\xi_bh_{pq}\right)
\end{split}
\]
From here the discussion is then exactly the same as in \cite{LW} (see Section 3.6 and 3.9). 
This operator has zero eigenvalues, for example, $h_{ab}=(\delta_i^a\delta_j^b-J^a_iJ^b_j)(\xi_i\xi_j)$ and $h_{ab}=J^p_b\xi_a \xi_p+J^p_a\xi_b\xi_p$ are both zero eigenvectors.
Moreover, all other nonzero eigenvalues are positive. It completes the proof. 
\end{proof}

Similar as in Theorem \ref{T-5}, we have the following extension result,

\begin{thm}Suppose $(\omega, J)$ is a smooth solution of the $d^*d$-Ricci flow in $[0, T]$ and suppose the Riemannian curvature $Rm$, $|\nabla \omega|$ and $|\nabla^2\omega|$ are uniformly bounded, then all high derivatives of $\omega, J$ are also uniformly bounded and hence the flow can be extended across $T$ (in a uniform way). In other words, if $[0, T_0)$ is the maximal interval with the smooth solution for $d^*d$-Ricci flow and $T_0<\infty$, then
\[
\limsup \left(|Rm|+|\nabla \omega|+|\nabla^2\omega|\right)\rightarrow \infty, \; \mbox{when}\; t\rightarrow T_0.
\]
\end{thm}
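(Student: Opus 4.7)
The plan is to follow the proof of Theorem \ref{T-5} almost verbatim, replacing the gauged $d^*d$-flow system by the analogous gauged system \eqref{HRflow-1} for the $d^*d$-Ricci flow and verifying that the additional Ricci-type term $B(\omega,J)$ fits into the same parabolic bootstrap template. First, given a smooth solution $(\omega,J)$ of \eqref{HRflow} on $[0,T]$ with $|Rm|+|\nabla\omega|+|\nabla^2\omega|\le C$, I would form the vector field $X=X(\omega,J,\bar g)$ from \eqref{E-x}, let $\phi_t$ be the diffeomorphism generated by $X$, and set $(\tilde\omega,\tilde J)=((\phi_t^{-1})^*\omega,(\phi_t^{-1})^*J)$. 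By the same diffeomorphism computation used at the end of Section \ref{HC}, $(\tilde\omega,\tilde J)$ solves \eqref{HRflow-1}; since the hypothesized bounds are invariant under pull-back by diffeomorphisms, it is enough to prove the extension for this gauged system.

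By the computations of Section 4.1 (together with the short-time existence analysis), the gauged system has the schematic form
\begin{equation*}
\begin{split}
\frac{\p\tilde\omega}{\p t} &= \tilde g^{pq}\p^2_{p,q}\tilde\omega + O(\p\tilde\omega,\p\tilde J),\\
\frac{\p\tilde J}{\p t} &= P(\tilde J) + O(\p^2\tilde\omega,\p\tilde\omega,\p\tilde J),
\end{split}
\end{equation*}
where $P=K_2+B+L_X$, viewed as an operator on $\tilde J$ with coefficients depending on $(\tilde\omega,\tilde J)$, is strictly elliptic on $\tilde J$ with principal symbol $I+Q$ and $Q\ge 0$. The $\tilde\omega$-equation contains no second derivatives of $\tilde J$, while the second derivatives of $\tilde\omega$ that enter the $\tilde J$-equation through the Ricci contribution in $B$ appear only in an off-diagonal principal-symbol entry. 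So the overall principal symbol matrix in $(\tilde\omega,\tilde J)$ is upper-triangular, exactly as in the extension argument for Theorem \ref{T-5}.

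I would then run the bootstrap of Theorem \ref{T-5}. The assumption $|Rm|+|\nabla\omega|+|\nabla^2\omega|\le C$ makes all the metrics uniformly equivalent in $C^{1,\alpha}$ and, via the compatibility identity $\nabla_k J^j_i=g^{jl}J^c_a J^b_l\nabla_i\omega_{bc}$, controls $|\nabla J|$ as well. Applying maximal parabolic regularity to the first equation upgrades $\tilde\omega$ to $C^{2,\alpha}$; with $\tilde\omega$ so improved, the entire right-hand side $O(\p^2\tilde\omega,\p\tilde\omega,\p\tilde J)$ of the $\tilde J$-equation sits in $C^\alpha$, and strict parabolicity of $P$ then yields $\tilde J\in C^{2,\alpha}$. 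Standard boot-strapping delivers uniform $C^{k,\alpha}$ bounds for every $k$, and the flow extends smoothly across $t=T$ in a uniform way.

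The only substantively new point compared with Theorem \ref{T-5} is the presence of $B(\omega,J)=\omega^{-1}(Ric-Ric(J\cdot,J\cdot))$, which involves two derivatives of $g$ and therefore of both $\tilde\omega$ and $\tilde J$. The $\p^2\tilde J$ part was already absorbed in the short-time analysis, where it combines with $K_2$ to produce the nonnegative symbol $Q$, so it does not spoil ellipticity of $P$. The $\p^2\tilde\omega$ part enters only as the off-diagonal $*$ of the upper-triangular symbol, and is defused as soon as the first equation provides $C^{2,\alpha}$ control on $\tilde\omega$. Thus no new feedback loop in high derivatives appears, and the main (mild) technical care is in confirming that this decoupling survives in the presence of $B$; everything else is verbatim from Theorem \ref{T-5}.
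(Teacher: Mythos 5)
Your proposal is correct and matches the paper's intended argument: the paper gives no separate proof for this theorem, stating only that it is ``similar as in Theorem \ref{T-5}'', and your write-up is exactly that adaptation --- gauge by the vector field $X$ to pass to the parabolic system \eqref{HRflow-1}, observe that $B$ contributes only a nonnegative symbol $Q$ on the $\tilde J$-diagonal and an off-diagonal $\p^2\tilde\omega$ term, and then run the same two-step $C^{2,\alpha}$ bootstrap (first $\tilde\omega$, then $\tilde J$). Your explicit check that the extra Ricci term preserves the upper-triangular principal symbol structure is precisely the point the paper leaves implicit.
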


\subsection{Special solutions and nearly Kahler manifold}The special solutions of the $d^*d$-Ricci flow read 
\begin{equation}\label{E-s1}
d^*d\omega=\l\omega, \; Ric(\cdot, \cdot)=Ric(J\cdot, J\cdot).
\end{equation}
for some constant $\l$ (clearly, $\l$ has to be nonnegative). When $\l=0$, $d\omega=0$ and hence this is a symplectic manifold with a compatible almost Hermitian metric which has $J$-invariant Ricci tensor. Clearly any Kahler structure satisfies this condition. Indeed, Blair and Ianus \cite{BI} proposed to study such an almost Kahler structure with $J$-invariant Ricci tensor and asked whether this is a sufficient condition for $J$ to be integrable. When the dimension is six or above, there are compact examples of almost Kahler structure with $J$-invariant Ricci tensor which are not Kahler, as first constructed in \cite{DM}; this problem remains open when the dimension is four, and it is related to the so-called Goldberg conjecture which is studied extensively in literature. We refer to, for example, \cite{AD} and reference in for more discussions. When $\l>0$, first we note that the dimension has to be six or above. Indeed we have $d^*\omega=0$, this would force $d\omega=0$ if the dimension is four, a contradiction. In particular, in this case we have
\[
\Delta \omega=(d^*d+dd^*)\omega=\l \omega.
\]
Hence $\omega$ is an eigenform of Hodge-Laplacian $\Delta$ with positive eigenvalue. 
Note that $d^*d\omega$ is scaling invariant for $\omega\rightarrow k\omega$, we can then ask $\l$ to be any positive constant. 
The equation \eqref{E-s1} is of particular interest in dimension 6 when $\l\neq 0$. A (strictly) \emph{nearly Kahler manifold}, first introduced by A. Gray,  is a very special almost Hermitian structure, which is in particular Einstein manifold with positive scalar curvature. We refer to \cite{Bryant, Verbtisky} for example for references. We only recall that a stricly nearly K\"ahler manifold can be defined to be a compact almost Hermitian manifold of real dimension six such that there exists a normalized $(3, 0)$ form $\Omega$ ($|\Omega|=1$) such that
\begin{equation}\label{E-nk}
d\omega=3 \mu Re(\Omega), d(Im \Omega)=-2\mu \omega^2,
\end{equation}
for some real nonzero constant $\mu$.  

\begin{prop}A strictly nearly Kahler structure satisfies \eqref{E-s1} with a suitable choice of $\mu$ and $\l$. 
\end{prop}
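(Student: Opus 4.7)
The plan is to verify the two conditions in \eqref{E-s1} separately, using the defining identities \eqref{E-nk} of a strictly nearly K\"ahler structure and the duality $*\omega=\omega^{n-1}/(n-1)!$ from \eqref{E-dual}. For $n=3$ this gives $*\omega=\omega^2/2$.

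First I would check the easy point: the Ricci condition. It is a classical theorem of A. Gray that any strictly nearly K\"ahler manifold is Einstein with positive scalar curvature, so $\mathrm{Ric}=cg$ for some constant $c>0$. Since $g$ is $J$-invariant by the almost Hermitian condition, $\mathrm{Ric}(J\cdot,J\cdot)=c\,g(J\cdot,J\cdot)=c\,g(\cdot,\cdot)=\mathrm{Ric}(\cdot,\cdot)$, which gives the second equation in \eqref{E-s1} for free.

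For the eigenform equation $d^*d\omega=\lambda\omega$, I would first verify $d^*\omega=0$. Using $d^*\omega=-*d*\omega=-\tfrac{1}{2}*d(\omega^2)=-*(\omega\wedge d\omega)=-3\mu*(\omega\wedge\mathrm{Re}(\Omega))$, and observing that $\omega$ has type $(1,1)$ while $\mathrm{Re}(\Omega)$ has type $(3,0)+(0,3)$, so the product has type $(4,1)+(1,4)$ which must vanish in complex dimension three. Hence $\Delta\omega=d^*d\omega$. Then using the first identity of \eqref{E-nk}:
\[
d^*d\omega=-3\mu\,{*}\,d\,{*}\,\mathrm{Re}(\Omega).
\]
The key lemma I need is the pointwise identity $*\mathrm{Re}(\Omega)=\mathrm{Im}(\Omega)$. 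This is a routine computation in an adapted orthonormal coframe $\{e^1,\ldots,e^6\}$ with $Je^{2i-1}=e^{2i}$ and $\Omega=(e^1+ie^2)\wedge(e^3+ie^4)\wedge(e^5+ie^6)$: one expands $\mathrm{Re}(\Omega)=e^{135}-e^{146}-e^{236}-e^{245}$ and $\mathrm{Im}(\Omega)=e^{136}+e^{145}+e^{235}-e^{246}$, and checks by direct sign counting that the Hodge dual of each term matches. Substituting and then applying the second identity of \eqref{E-nk} gives
\[
d^*d\omega=-3\mu\,{*}\,d\,\mathrm{Im}(\Omega)=-3\mu\,{*}\,(-2\mu\,\omega^2)=6\mu^2\,{*}\,\omega^2.
\]
Since $**=\mathrm{id}$ on two-forms in dimension six and $*\omega=\omega^2/2$, we get $*\omega^2=2\omega$, so $d^*d\omega=12\mu^2\omega$, giving $\lambda=12\mu^2>0$.

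The main (minor) obstacle is the Hodge star identity $*\mathrm{Re}(\Omega)=\mathrm{Im}(\Omega)$: the sign depends on orientation and on the convention used to define $\Omega$ (namely whether $\omega$ or $-\omega$ is used to match orientation with $\tfrac{i}{8}\Omega\wedge\bar\Omega$). With the conventions in \eqref{E-nk}, one may have to absorb a sign by replacing $\Omega$ with $\bar\Omega$, but this does not affect the final conclusion, since the computation produces $\lambda=c\mu^2$ with $c>0$ regardless; one then obtains any desired $\lambda>0$ by rescaling $\omega\to k\omega$ (under which $d^*d\omega$ is scale invariant, so $\lambda$ scales as $k^{-1}\lambda$ relative to the form $\omega$).
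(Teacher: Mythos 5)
Your proposal is correct and follows essentially the same route as the paper: $d^*\omega=0$ from $\omega\wedge d\omega=0$, the identity $*\mathrm{Re}(\Omega)=\mathrm{Im}(\Omega)$, and then the chain $d^*d\omega=-3\mu*d\,\mathrm{Im}(\Omega)=6\mu^2*\omega^2=12\mu^2\omega$, together with Einstein $\Rightarrow$ $J$-invariant Ricci. The only cosmetic difference is that the paper obtains $*\mathrm{Re}(\Omega)=\mathrm{Im}(\Omega)$ from the eigenvalue relation $*\Omega=-\sqrt{-1}\,\Omega$ for a $(3,0)$-form rather than by your explicit coframe expansion, which also quietly settles the sign/orientation worry you raise at the end.
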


\begin{proof}The proof is straightforward. Suppose $(M, \omega, J, \Omega)$ is a strictly nearly Kahler manifold. 
Since $d\omega$ is the real part of a $(3, 0)$ form, we have, $\omega\wedge d\omega=0$, namely, $d^*\omega=0$. The metric is Einstein, so it has in particular $J$-invariant Ricci curvature.
Now we  show $d^*d\omega=\l \omega$.
Let $*$ be the Hodge star, extended complex linearly to complex forms. Since $\Omega$ is a $(3, 0)$-form, we have
\[
*\Omega=-\i \Omega,\; *\bar{\Omega}=\i \bar{\Omega}. 
\]
It follows that, using \eqref{E-nk}
\[
* d\omega=3\mu *Re(\Omega)=\frac{3\i}{2} \mu(\bar\Omega-\Omega)=3\mu Im(\Omega)
\]
We compute
\[
 d^*d\omega=-*d*d\omega=-3\mu *dIm(\Omega)=6\mu^2 *\omega^2=12\mu^2\omega.
\]
By taking $\l=12\mu^2$, we complete the proof. 
\end{proof}

We can ask the following, 

\begin{q} Let $(M, \omega, J)$ be an almost Hermitian compact manifold of dimension 6, if it satisfies \eqref{E-s1}, is it a strictly nearly K\"ahler structure?
\end{q}

Note that $d^*d\omega=\l \omega (\l\neq 0)$ is a very strong restriction. It is equivalent to the following, $d^*\omega=0$ and $\Delta \omega=\l \omega$. For an almost Hermitian manifold, there is a decomposition $d=d^{2, -1}+\p+\bar \p +d^{-1, 2}$ (by Leibnitz rule on forms), where $d^{2, -1}$ is the Nijenhaus tensor, and $\p$ is the $(1, 0)$
part of $d$. Note that $d^*\omega=0$ is equivalent to $\omega\wedge d\omega=0$ (in dimension six); in other words, $\omega\wedge \p \omega=0$, or $\p (\omega^2)=0$.  
Note that $d\omega$ is the real part of a $(3, 0)$ form if and only if $\p \omega=0$. We can give the following description of a (strictly) nearly Kahler structure as follows.

\begin{prop}Suppose $(\omega, J)$ is an almost Hermitian (compact) manifold in the dimension 6. Then $(\omega, J)$ is a nearly Kahler structure if and only if $\p \omega=0$ and $d^*d\omega=\l \omega$ for some positive constant $\l$. 
\end{prop}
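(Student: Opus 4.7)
The plan is to treat the two directions separately, with the heart of the argument being in the converse.

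For the forward direction, assume $(\omega, J)$ is strictly nearly K\"ahler, so $d\omega = 3\mu\,\text{Re}(\Omega)$ for a normalized $(3,0)$-form $\Omega$ and real constant $\mu \ne 0$. Because $\text{Re}(\Omega) = \tfrac{1}{2}(\Omega + \bar\Omega)$ is supported in bidegrees $(3,0)$ and $(0,3)$, the $(2,1)$ and $(1,2)$ components of $d\omega$ vanish, so $\partial\omega = 0$. The eigenvalue identity $d^*d\omega = 12\mu^2\omega$ was established in the previous proposition.

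For the converse, assume $\partial\omega = 0$ and $d^*d\omega = \lambda\omega$ with $\lambda > 0$. Since $\omega$ is real and of type $(1,1)$, complex conjugation gives $\bar\partial\omega = 0$, so in the decomposition $d = d^{2,-1} + \partial + \bar\partial + d^{-1,2}$ only the $(3,0)$ and $(0,3)$ parts of $d\omega$ survive. Write $d\omega = \alpha + \bar\alpha$ with $\alpha \in \Lambda^{3,0}$. Since $\Lambda^{4,1} = 0$ in complex dimension three, $\omega\wedge\alpha = 0$ and hence $d(\omega^2) = 2\,\omega\wedge d\omega = 0$, so $d^*\omega = 0$ and $\Delta\omega = \lambda\omega$. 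Using $*\alpha = -i\alpha$ and $*\bar\alpha = i\bar\alpha$ one computes $*d\omega = 2\,\text{Im}(\alpha)$, whence
\[
\lambda\omega = d^*d\omega = -*d*d\omega = -2*d(\text{Im}(\alpha)),
\]
which gives $d(\text{Im}(\alpha)) = -\tfrac{\lambda}{4}\omega^2$. Once it is shown that $|\alpha|$ equals a positive constant $c$, one sets $\Omega := \alpha/c$ and chooses $\mu$ with $3\mu = 2c$; the two identities above then read $d\omega = 3\mu\,\text{Re}(\Omega)$ and $d(\text{Im}(\Omega)) = -2\mu\omega^2$ with $\lambda = 12\mu^2$, exhibiting the nearly K\"ahler structure.

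The main obstacle is thus the pointwise constancy of $|\alpha|$ (equivalently of $|d\omega|^2$). To establish it, I would first exploit $d^2\omega = d\alpha + d\bar\alpha = 0$: since $\Lambda^{4,0} = 0$ forces $d\alpha \in \Lambda^{3,1}\oplus\Lambda^{2,2}$, matching bidegrees yields $\bar\partial\alpha = 0$ together with the reality constraint $d^{-1,2}\alpha + d^{2,-1}\bar\alpha = 0$ on the $(2,2)$-part. Second, since $\Delta$ commutes with $d$, the identity $\Delta\omega = \lambda\omega$ upgrades to $\Delta(d\omega) = \lambda\,d\omega$, so $\alpha + \bar\alpha$ is an eigenform of $\Delta$; applying the Bochner--Weitzenb\"ock identity recalled in Section~3.5 to this eigenform, together with the preceding type vanishings, should yield a pointwise PDE for $|\alpha|^2$ with sign-definite right-hand side, and the maximum principle on the compact manifold $M$ then forces $|\alpha|^2$ to be constant (the constant is positive because $\lambda > 0$ precludes $d\omega \equiv 0$). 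A more conceptual route is via the Chiossi--Salamon classification of $SU(3)$-structures in dimension six: the hypotheses $\partial\omega = 0$ and $d^*d\omega = \lambda\omega$ are precisely those that kill the torsion classes $\mathcal{W}_2,\mathcal{W}_3,\mathcal{W}_4,\mathcal{W}_5$, leaving only $\mathcal{W}_1$, which by definition is the strictly nearly K\"ahler class. Pinning down the cleanest route to this constancy is the technical crux of the proof.
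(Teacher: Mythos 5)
Your proposal follows essentially the same route as the paper's proof: the forward direction is delegated to the preceding proposition, and in the converse you derive exactly the paper's two identities, namely $*\,d\omega = 2\,\mathrm{Im}(\alpha)$ for the $(3,0)$-part $\alpha$ of $d\omega$ and $d(\mathrm{Im}\,\Omega) = -\lambda\omega^2/2$, and you match constants to get $\lambda = 12\mu^2$. The one place you genuinely diverge is that you explicitly isolate the pointwise constancy of $|\alpha|$ as the remaining crux, whereas the paper simply writes $\Omega = 3\mu\tilde\Omega$ with $\tilde\Omega$ of unit norm and $\mu$ a nonzero \emph{constant}, i.e.\ it assumes this constancy without comment (its displayed line $(d^*d\omega,\omega) = |d\omega|^2 = \lambda|\omega|^2$ uses the global $L^2$ pairing and only fixes the average of $|d\omega|^2$, not its pointwise value). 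So your diagnosis is apt, but the Bochner--Weitzenb\"ock eigenform argument you sketch is much heavier than needed, and the appeal to Chiossi--Salamon imports machinery external to the paper. The constancy in fact follows in one line from relations you already have: $\omega\wedge\Omega$ is a $(4,1)$-form and hence vanishes identically, so $0 = d(\omega\wedge\Omega) = d\omega\wedge\Omega + \omega\wedge d\Omega$; here $d\omega\wedge\Omega = \tfrac12\bar\Omega\wedge\Omega$ since the $(3,0)$-part wedges to zero, while $d\Omega = d(\mathrm{Re}\,\Omega) + \sqrt{-1}\,d(\mathrm{Im}\,\Omega) = -\tfrac{\sqrt{-1}}{2}\lambda\,\omega^2$ using $d(d\omega)=0$ together with your identity for $d(\mathrm{Im}\,\Omega)$. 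This yields $\Omega\wedge\bar\Omega = -\sqrt{-1}\,\lambda\,\omega^3$, and since $\Omega\wedge\bar\Omega$ is a universal positive multiple of $-\sqrt{-1}\,|\Omega|^2\,\omega^3$ pointwise, $|\Omega|^2$ is the constant determined by $\lambda$ (nonzero because $\lambda>0$). With that supplied, your normalization $\Omega = 3\mu\tilde\Omega$ and the verification of the nearly K\"ahler structure equations go through exactly as in the paper.
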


\begin{proof}The ``only if" part is proved in the previous proposition and we  need to show ``if" part.   
First  $d^*d\omega=\l \omega$ reads
\[
-*d*d\omega=\l \omega.
\]
We rewrite the above as,
\begin{equation}\label{E-s2}
d(*d\omega)=-\l\omega^2/2. 
\end{equation}
Note that $\p \omega=0$ means that $d\omega$ is the real part of a $(3, 0)$ form, denoted by $\Omega$. We write $Re(\Omega)=d\omega$. We have
\[
*\Omega=-\i \Omega,\; *\bar{\Omega}=\i \bar{\Omega}. 
\]
That is,
\[
*Re(\Omega)=Im(\Omega).
\]
By \eqref{E-s2}, we have
\[
d(Im(\Omega))=-\l\omega^2/2.
\]
Now suppose $\Omega=3\mu\tilde \Omega$ such that $\tilde \Omega$ has norm 1, for some nonzero constant $\mu$.  By by the equation we know that
\[
(d^*d\omega, \omega)=|d\omega|^2=\l |\omega|^2,
\]
hence $\l=12\mu^2$. Then we have
\[
d\omega=3\mu Re(\tilde \Omega), d(Im(\tilde \Omega))=-\frac{\l}{6\mu} \omega^2=-2\mu \omega^2. 
\]
This is the desired structure equation for a nearly Kahler structure in dimension 6. 
\end{proof}

We can see that $d^*d$-flow and $d^*d$-Ricci flow are both closely related to nearly K\"ahler structures and we believe it should give a plausible tool to find the existence of nearly K\"ahler structure.  We will discuss this aspect of the $d^*d$-flow and $d^*d$-Ricci flow in more details elsewhere.

\section{Smooth four manifolds}First we recall the definition of an \emph{irreducible} manifold.
\begin{defn}A smooth four-manifold $M$ is called irreducible if for every smooth decomposition of connected sums $M\approx M_1 \sharp M_2$, one summand $M_i$ must be homeomorphic to the standard four sphere $S^4$. 
\end{defn}

We consider simply-connected irreducible four manifold with an almost complex structure. When a manifold admits an almost complex structure  can be determined purely in terms of its cohomology. For a simply-connected four manifold, it admits an almost complex structure if and only if ${b_2}_{+}$ is odd. The known examples of simply-connected irreducible four manifolds all have almost complex structure. But in general this remains an open problem, \cite{Stern, GS}
\begin{q}\label{q-almost}Does every simply-connected irreducible four manifold with $b_2>0$ support an almost complex structure?
\end{q}

One of our main motivations is to understand the following,
 \begin{q}
 When a  simply-connected irreducible four manifold with an almost complex structure admits a symplectic structure? 
 \end{q}
 We believe this problem is one of the keys to reveal the mystery of geometry and topology of four manifolds. In this sense  the $d^*d$-flow  (and $d^*d$-Ricci flow) should have important applications. We run a $d^*d$-flow (or $d^*d$-Ricci flow) on such a manifold, and try to understand the geometry of topology of the underlying manifold by understanding the possible singularities and long time behavior of the flow.  If we are able to derive powerful analytic tools for these geometric flows, such as Hamilton-Perelman's theory for Ricci flow, there is a good chance to understand smooth four manifolds clearer. 

We can give some expectations, trying to to combine Seiberg-Witten invariants and possible exotic spheres together to govern the geometry of simply-connected four manifolds. Suppose two homeomorphic (simply connected irreducible four) manifolds $M, N$ have the same Seiberg-Witten invariants (whenever it is well defined), one might expect that either they are diffeomorphic to each other, or $M=N\sharp S$, for some exotic sphere $S$. We can be more specific for symplectic structures. 

\begin{q} \label{q-s1}Let $N$ be an irreducible almost complex simply-connected four manifold. Suppose $N$ is homeomorphic to a symplectic manifold $M$ and we assume further that the Seiberg-Witten invariant of $N$ is the same as $M$ (assume $b_+>1$), then either $N$ is diffeomorphic to $M$ or $N\approx M\sharp S$, where $S$ is a homotopy (exotic) four sphere. 
\end{q}

It is also interesting to try to relate the $d^*d$-flow (the singularities) with the surgeries extensively used in four manifolds to construct examples (see Conjectures in \cite{FS1} for example). 
Nevertheless,  we believe that the close study of $d^*d$-flow (as well as $d^*d$-Ricci flow) should help us to understand the smooth topology of four manifolds.
An ultimate understanding of this flow might eventually give a clearer understanding of smooth four manifolds. Indeed, S. T, Yau \cite{Yau} gave some suggestions to understand four manifolds:

``A true understanding of four manifolds probably should come from understanding the
question of existence of the integrable complex structures...A good conjectural statement need to be made on the topology of four
manifolds that may admit an integrable complex structure. Pseudo-holomorphic
curve and fibration by Riemann surfaces should provide important information.
Geometric flows may still be the major tool."

In the same vein we believe a good understanding of four manifolds could come from understanding the question of existence of symplectic structures among almost complex manifolds and $d^*d$-flow can be a very useful tool.

\end{document}